\newtheorem{theorem}{Theorem}[section]
\newtheorem{proposition}[theorem]{Proposition}
\newtheorem{lemma}[theorem]{Lemma}
\newtheorem{corollary}[theorem]{Corollary}
\newtheorem{remark}{Remark}[section]
\newtheorem{theo}{Theorem}
\theoremstyle{definition}
\newtheorem{definition}[theorem]{Definition}
\newcommand{\R}{\mathbb{R}}
\newcommand{\C}{\mathbb{C}}
\newcommand{\N}{\mathbb{N}}
\newcommand{\Z}{\mathbb{Z}}
\newcommand{\T}{\mathbb{T}}
\newcommand{\lap}{\bigtriangleup}
\newcommand{\E}{\mathbb E}
\newcommand{\mE}{\mathcal E}
\newcommand{\an}[1]{\langle #1 \rangle}
\newcommand{\mX}{\mathcal X}
\newcommand{\mXe}{\mathcal X^\varepsilon}
\begin{document}

\title{On Gibbs measure and weak flow for the cubic NLS with non-localised initial data}
\author{Federico Cacciafesta\footnote{SAPIENZA - Universit\'a di Roma, Dipartimento di Matematica, Piazzale A. Moro 2, I-00185 Roma, Italy - email:cacciafe@mat.uniroma1.it}\; and Anne-Sophie de Suzzoni\footnote{Universit\'e Paris 13, Sorbonne Paris Cit\'e, LAGA, CNRS ( UMR 7539), 99, avenue Jean-Baptiste Cl\'ement, F-93430 Villetaneuse, France - email: adesuzzo@math.univ-paris13.fr}}

\maketitle

\begin{abstract} 
In this paper we prove the existence of an invariant measure for the cubic NLS $$i\partial_t u + \lap u -  |u|^2 u = 0$$ on the real line in the sense that we prove the existence of a measure $\rho$ supported by non-localised functions such that there exists random variables $X(t)$ whose laws are $\rho$ (thus independent of $t$) and such that $t\mapsto X(t)$ is a solution to the cubic NLS. Our strategy for the proof is inspired by \cite{burqtzv} and relies on the application of Prokhorov and Skorokhod Theorems to a sequence of measures which are invariant under some approximating flows, as we proved in our previous \cite{lastbaby}. However, the work by Bourgain, \cite{B00} provides a stronger result than this one, as it gives almost sure strong solutions for the cubic NLS and the invariance of the measure can be deduced from it.
\end{abstract}

\tableofcontents

\section{Introduction}

The problem of building invariant measures under nonlinear flows for PDEs dates back to the pioneering work of Lebowitz-Rose-Speer \cite{LebRosSpe}, and was subsequently addressed by Bourgain in his seminal papers on KdV and Schr\"odinger on the torus \cite{B94}. 

In the subsequent years, a lot of effort has been spent in order to investigate the connections between invariant measures, and more in general the effect of "randomization" in PDEs, with various properties of the corresponding flows. This kind of probabilistic approach has been indeed successfully developed in several contests to significantly improve the existing deterministic theory: among the most remarkable results, we mention the papers \cite{burq1}-\cite{burq2} in which these ideas are developed to prove some supercritical well-posedness for the cubic wave equation. 

An attempt of providing a complete or even satisfying literature on the topic is out of our scope here, and we refer to the recent \cite{staffilani} for a fairly 
complete picture. We should anyway stress the fact that most of the available results concern evolution equations on compact  manifolds. The reason for this is somehow technical, as having a countable basis of eigenfunctions for the Laplacian turns to be a major advantage (suggesting a "natural" randomization) in the construction of an invariant measure, which can be roughly summarized by the following scheme: frequency truncation-Liouville theorem-uniform probability estimates. 
Randomization (and related questions) in a non-compact setting turns in fact to be much more complicated, and is a topic currently attracting a lot of attention from the community, which has produced some significant results in the very last years. We mention \cite{burthotzve} in which the authors consider a NLS on the real line with a well chosen external potential "trapping" the solution (see also \cite{benoh} in which modulation spaces are used and \cite{deng} for the 2D case), \cite{luhmen} in which supercritical well posedness 
for NLW on $\R^3$ is discussed (see also \cite{xu} for 3D NLW).

Our recent paper \cite{lastbaby} fits within this contest: there, we built a Gibbs measure for the cubic-type NLS	
\begin{equation}\label{NLS1}
i\partial_t u - \lap u + \chi |u|^2 u = 0,\qquad u:\R_t\times\R_x\rightarrow\C
\end{equation}
with a smooth interaction potential $\chi$ satisfying some smallness assumptions (namely, $0\leq\chi\lesssim \langle x\rangle^{-\alpha}$ and $|(1-\Delta)^{s_0/2}\chi|\lesssim\langle x\rangle^{-\alpha}$ for some $\alpha>1$, $s_0>1/4$), and proved it to be invariant under the flow of the equation above on a suitable topological $\sigma$-algebra. The main novelty of \cite{lastbaby} is represented by the randomization, as there is no trapping potential coming into play. Inspired by \cite{mckeanvan} (see also \cite{ansokg} for the Klein-Gordon equation), our randomization is therefore given by
$$
\phi(x)\cong\int_\R \frac{e^{inx}}{\sqrt{1+n^2}}dW_n(\omega)
$$
where $\omega$ is the random event and $W_n$ is a Brownian motion, which makes $\phi(x)$ It\^o  integral.
The presence of the function $\chi$ in \eqref{NLS1} is essentially technical, and heavily used in the convergence argument (the strategy to build the invariant measure consists in approximating the flow of \eqref{NLS1} with "approaching" equations on finite dimensional spaces, define invariant measures for them and then pass to the limit; this  requires several tools from local and global deterministic analysis).

The purpose of this paper is essentially to show that the function $\chi$ in \eqref{NLS1} can be removed, and thus to build a random variable which is a solution of the cubic NLS
\begin{equation}\label{NLS2}
i\partial_t u - \lap u +  |u|^2 u = 0,\qquad u:\R_t\times\R_x\rightarrow\C
\end{equation}
whose law law does not depend on time.

Our strategy is inspired by \cite{burqtzv}, in which the authors develop a clever machinery relying on two important results of measure theory, Prokhorov and Skorokhod Theorems, to build an invariant measure for some different dispersive flows on compact manifolds and obtaining, as an application, existence of solutions for the corresponding equations in certain spaces. This strategy comes from fluid mechanics, see for instance \cite{machin1,machin2} and reference therein. In \cite{burqtzv}, the authors adapt it to dispersive equations. We follow their proof.

We briefly summarize the main steps needed (all the details and the definitions will be made clear through the paper):

1) Given a PDE and its associated flow $\Phi(t)$, introduce an approximating problem such that its corresponding flow $\Phi_N(t)$ is global and such that we are able to construct an invariant measure $\rho_N$ on some topological space $X$, for which we have good control.

2) Define, for every $N$, the measure $\nu_N=\rho_N\circ \Phi_N^{-1}$ and show that it is \emph{tight} in some space $C([-T,T];Y)$ with $Y\supset X$. Therefore, the application of Prokhorov Theorem gives the existence of a measure $\nu$ on  $C([-T,T];Y)$ which is the weak limit of the sequence $\nu_N$.

3) Apply Skorokhod Theorem to construct a sequence of random variables converging to a solution of the initial problem.

We will thus apply this strategy to our setting with the aim of removing the interaction potential $\chi$ in \eqref{NLS1}. Equation \eqref{NLS1} will of course play the role of the "approximating problem", and we will use the invariant measure built in \cite{lastbaby} (although slightly changing the topological setting) as the approaching $\rho_N$. Anyway, we remark that the main difference with \cite{burqtzv} is in that we here work in a non compact setting: this will make the limiting argument significantly more complicated, and we will have to rely on some Feynman-Kac type results to make things work. 

We stress that the main difference with \cite{burqtzv} comes from the proof of the tightness of the family $(\nu_N)_N$. The tightness is deduced from uniform bounds on $\rho_N$ and the invariance of $\rho_N$ under the flow $\Phi_N$. This does not change for us. Nevertheless, the uniform bounds on $\rho_N$ in \cite{burqtzv} is based on the fact that $\rho_N$ can be written $d\rho_N(u) = F_N(u) dq(u)$ where $q$ is a well-known measure, often a Brownian bridge, and $F_N$ converges in $L^1(dq)$. This is not our case. The sequence of measure $\rho_N$ converges for path integral reasons towards a measure $\rho$ mutually singular with $q$,\cite{SIMfunint,GJ87} One of the main problem is here to understand this convergence in order to get uniform bounds on the sequence $(\rho_N)_N$. This requires to use Feynman-Kac or integral paths results. So, the novelty of this work consists in putting together Feynman-Kac type results \cite{SIMfunint,GJ87} with the Prokhorov-Skorokhod machinery.

We are now ready to state our main result (we postpone to the next subsection the definition of the functional spaces).
\begin{theo}\label{teo}
There exist a probability space $(\Omega,\mathcal{F}, P)$, a random variable $X=X(t)$ with values in $\mathcal{X}^\varepsilon_{\varphi, T}$ and a measure $\rho$ such that 
\begin{itemize}
\item
 For every $t$, the law of $X(t)$ is $\rho$ (thus, in particular, it does not depend on $t$);
\item
The random variable $X$ is almost surely a weak solution to \eqref{NLS2};
\item The measure $\rho$ is supported by non localised functions (not $L^2(\R)$).
\end{itemize}
\end{theo}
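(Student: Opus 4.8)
The plan is to run the Prokhorov--Skorokhod scheme of steps 1)--3) above with \eqref{NLS1} as the approximating equation, taking $\chi=\chi_N$ for a family of admissible potentials $\chi_N$ (in the sense of \cite{lastbaby}) that saturate to the constant $1$, e.g.\ $\chi_N(x)=\chi(x/N)$ with $\chi$ smooth, $0\le\chi\le 1$, $\chi(0)=1$ and $\chi$ rapidly decaying. First I would invoke \cite{lastbaby}, in the slightly modified functional framework built around $\mathcal X^\varepsilon_\varphi$, to obtain for each $N$ a global flow $\Phi_N(t)$ for \eqref{NLS1} with $\chi=\chi_N$ together with an invariant probability measure $\rho_N$, of Gibbs form $d\rho_N(u)=Z_N^{-1}\exp\!\big(-\tfrac12\int_\R\chi_N|u|^4\,dx\big)\,dq(u)$, where $q$ is the Gaussian field of covariance $(1-\lap)^{-1}$, supported on locally-$H^{1/2-\varepsilon}$, non-$L^2(\R)$ functions. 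The crucial point is that the admissibility constants of \cite{lastbaby} degenerate as $N\to\infty$, so every estimate must be made \emph{uniform} in $N$.

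The heart of the argument, and the step I expect to be the main obstacle, is to establish uniform-in-$N$ bounds on $\rho_N$ and to identify the limit measure. Concretely I would aim at
\[
\sup_{N}\ \int \|u\|_{H^{1/2-\varepsilon}(\{|x|<R\})}^{k}\, d\rho_N(u)\ \le\ C(R,k)<\infty \qquad\text{for all }R>0,\ k\in\N,
\]
together with the existence of a weak limit $\rho=\lim_N\rho_N$ which is \emph{stationary and ergodic} under spatial translations. This cannot be read off the density $e^{-\frac12\int\chi_N|u|^4}$ as in \cite{burqtzv}: here $Z_N\to0$ (since $\int_\R|u|^4=\infty$ for $q$-a.e.\ $u$, by stationarity of $q$) and $\rho$ is mutually singular with $q$. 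Instead I would reinterpret $\rho_N$ via Feynman--Kac as the law of a finite-``volume'' $P(\phi)_1$-type (ground-state) process and use the constructive-field-theory toolbox of \cite{SIMfunint,GJ87}: since the equation is defocusing, the perturbation $\tfrac12|z|^4$ of the generator is nonnegative and can only lower local mass; combined with Nelson-type / hypercontractivity estimates and the spectral gap of the limiting Schr\"odinger generator this should yield both the uniform moment bounds and the existence, translation invariance and mixing of $\rho$.

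Granting this, the remaining steps follow the lines of \cite{burqtzv}. I would set $\nu_N=\rho_N\circ\Phi_N(\cdot)^{-1}$, a probability measure on $C([-T,T];Y)$ with $Y=H^{s'}_{\mathrm{loc}}(\R)$ for a fixed $1/6<s'<1/2-\varepsilon$, so that $H^{1/2-\varepsilon}_{\mathrm{loc}}$ embeds compactly into $Y$ on every ball while $Y\hookrightarrow L^3_{\mathrm{loc}}$. The invariance $\rho_N\circ\Phi_N(t)^{-1}=\rho_N$ together with the uniform moment bounds controls, uniformly in $N$, suitable space-time moments of $u_N$ in $H^{1/2-\varepsilon}_{\mathrm{loc}}$; reading $\partial_t u_N=i\lap u_N-i\chi_N|u_N|^2u_N$ and using $0\le\chi_N\le 1$ gives uniform control of $u_N$ in $W^{1,\infty}_tH^{-2}_{\mathrm{loc}}$, hence by interpolation of a time-H\"older norm. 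An Aubin--Lions--Simon / Ascoli argument on each ball, followed by a diagonal extraction in $R$, yields the tightness of $(\nu_N)_N$ on $C([-T,T];Y)$, and Prokhorov produces a subsequential weak limit $\nu$.

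Finally, Skorokhod produces a probability space $(\Omega,\mathcal F,P)$ and random variables $\tilde X_{N_k},\tilde X$ with laws $\nu_{N_k},\nu$ and $\tilde X_{N_k}\to\tilde X$ in $C([-T,T];Y)$ $P$-a.s. Since solving the weak (distributional/Duhamel) formulation of \eqref{NLS1} with $\chi=\chi_{N_k}$ is a Borel condition on the path, holding $\nu_{N_k}$-a.s., it holds for $\tilde X_{N_k}$ $P$-a.s.; letting $k\to\infty$, the linear terms pass by the a.s.\ convergence in $C_tY$ and the nonlinear term because $\chi_{N_k}|\tilde X_{N_k}|^2\tilde X_{N_k}\to|\tilde X|^2\tilde X$ in $C_tL^1_{\mathrm{loc}}$ (using $\chi_{N_k}\to1$ locally uniformly, $Y\hookrightarrow L^3_{\mathrm{loc}}$ and the uniform $L^3_{\mathrm{loc}}$ bounds), so $\tilde X$ is $P$-a.s.\ a weak solution of \eqref{NLS2}. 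For each $t$ one has $\mathrm{law}(\tilde X(t))=\lim_k\mathrm{law}\big(\Phi_{N_k}(t)u_0\big)=\lim_k\rho_{N_k}=\rho$, so the law of $\tilde X(t)$ is the $t$-independent measure $\rho$; and $\rho$ is carried by non-$L^2(\R)$ functions because, by its translation invariance and ergodicity, $\tfrac1{2R}\|u\|_{L^2(\{|x|<R\})}^2\to c>0$ for $\rho$-a.e.\ $u$ as $R\to\infty$, forcing $\|u\|_{L^2(\R)}=\infty$. A standard lower-semicontinuity argument (the strong norm being l.s.c.\ under the weaker convergence, together with the uniform bounds and Fatou) upgrades $\tilde X$ to a random variable valued in $\mathcal X^\varepsilon_{\varphi,T}$, which would establish the three assertions of Theorem~\ref{teo}.
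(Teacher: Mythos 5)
Your architecture matches the paper's: identify the Gaussian part $q$ as a complex oscillator process, realize the limit measure $\rho$ as a $P(\phi)_1$ ground-state process via Simon's Feynman--Kac theorem, use this to obtain uniform-in-$N$ moment bounds that cannot be read off the densities $e^{-\frac12\int\chi_N|u|^4}/Z_N$ (since $Z_N\to 0$ and $\rho\perp q$), then run Prokhorov--Skorokhod. That is precisely the paper's central novelty, and you located it correctly. A few technical variations are worth flagging. First, the paper takes $\chi_N$ equal to $1$ on $[-N,N]$, compactly supported, and vanishing outside $[-N-D_N^3,N+D_N^3]$; the width $D_N^3$ (with $D_N=\|e^{-\frac12\int_{-N}^N|u|^4}\|_{L^1(dq)}$) is crucial in Lemma~\ref{lem-boundDN}, which bounds $|\E_{\rho_N}F-\E_{\mu_N}F|\lesssim D_N^3/D_N'$ and is what lets one transfer the uniform moment estimates from the sharp-cutoff measures $\mu_N$ to the smooth-cutoff invariant measures $\rho_N$; with $\chi_N(x)=\chi(x/N)$ the support mismatch has width $\sim N$, not $D_N^3$, and that comparison would need to be reworked. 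Second, for the moment bounds themselves the paper does not invoke hypercontractivity or a spectral gap; it uses the maximum principle $e^{-s\hat L}\le e^{-s\hat L_0}$ together with explicit estimates on Mehler's kernel $Q_x(u_1,u_2)$ and its $x$-derivatives, which is more elementary and gives the H\"older-increment bound $\E(|u(x)-u(y)|^r/|x-y|^{1+\alpha r})\lesssim\varphi_{\alpha,r}(x)$ directly. Third, and most substantively: your argument that $\rho$-a.e.\ $u\notin L^2(\R)$ invokes \emph{ergodicity} of $\rho$ under translations and the pointwise ergodic theorem, but you do not establish ergodicity and the paper never claims it. The paper's route (Propositions~\ref{prop-nonloc}--\ref{prop-nonloc2}) needs only translation invariance plus absolute continuity of the one-point marginal (law of $u(0)$ is $\Omega_V^2\,du$), both immediate from the $P(\phi)_1$ description, and a Fatou argument: $\rho(\|u\|_{L^2}<\infty)\le\rho(\|u\|_{L^2(R,R+1)}\to0)\le\rho(u(0)=0)=0$. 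Your ergodicity-based argument is not wrong (the $P(\phi)_1$ process is indeed ergodic by Markov-plus-spectral-gap considerations), but it is a heavier hypothesis than needed and would require its own justification.
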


\begin{remark} With this strategy of proof, we cannot state a stronger result such as there exists a flow $\psi(t)$ of \eqref{NLS2} and a measure $\rho$ such that $\rho$ is invariant under $\psi(t)$ because the random variable $X(t)$ allows us only to define a weak flow of \eqref{NLS2} which in particular is not necessarily unique. We discuss the link between uniqueness and invariance in the Subsection \ref{finalinv}. Nevertheless, the measure $\rho$ is formally written $e^{-\mE(u)}du$ where $\mE(u)$ is the energy plus the mass, which makes it close to a Gibbs measure. Hence, at least in the sense of the above theorem, we get invariance of the "Gibbs" measure.

However, in \cite{B00}, Bourgain showed the uniqueness of the solution. Bourgain did not study the limiting measure, but its existence can be deduced from quantum field theory, such as in \cite{SIMfunint} as explained later or \cite{GJ87}, Chapter 3 and more precisely Theorem 3.3.3. Besides, combining the existence of the measure and the uniform convergence theorem of Bourgain, one should be able to prove the strong invariance of the limiting measure under the flow hence defined, giving a much stronger result. \end{remark}

\begin{remark} Considering only the kinetic part $dq$ of the approximating measures 
$$
d\rho_N(u) = F_N(u)dq(u),
$$
one can see that $q$ is somehow a large box limit. Indeed, $q$ is the limit of a sequence $q_L$, where $q_L$ is the law of 
$$
\varphi_L(x) = \sum_{k\in\Z} L^{-1/2} e^{ikx/L} (1+(k/L)^ 2)^{-1/2}g_k
$$
with $(g_k)$ independent centred normalised Gaussian variables. This limiting process has been explained in \cite{lastbaby} and consists in building a Wiener integral. The random variable $\varphi_L$ is built as a map of $2\pi L\T$ with $\varphi_L = (1-\lap)^{-1/2}\underline \varphi_L$ and 
$$
\underline \varphi_L (x) = \sum_{k\in \Z} \frac{e^{ikx/L}}{\sqrt L} g_k.
$$
We note that $(x\mapsto \frac{e^{ikx/L}}{\sqrt{2\pi L}})_k$ is an orthonormal basis of $L^2(2\pi L \T)$. The measure $dq$ is obtained by letting the size of the box (or torus) $2\pi L$ go to $\infty$. Besides, taking the mean value of the $L^2$ norm of $\varphi_L$ to the square gives something of order $L$ and hence diverges. This is a way to understand the non locality of the initial data.

However and as we have mentioned earlier, our final measure $\rho$ is mutually singular with $q$. Nevertheless, thanks to Feynman-Kac type results, we know that $\rho$ is invariant under translations and that when $u$ has the law $\rho$, the law of $u(x)$ is (not depending on $x$ and) absolutely continuous with regard to the Lebesgue measure. This is sufficient to prove that $\rho$ is supported by non localised functions, as we see in Propositions \ref{prop-nonloc}, \ref{prop-nonloc2}. In particular, $\rho$ is not supported by functions which are not in $L^2(\R)$.
\end{remark}

\begin{remark}Our proof is adaptable to other non-linearities. In fact, it is adaptable to other Hamiltonians. The sufficient conditions are given by the Feynman-Kac theory. But at least, one could consider a quintic non linearity. Or an equation with the same Hamiltonian but with a different dispersion relation. The fact that the measure is supported by non-localised functions may be explained by the fact that localised data may generate scattering. Then, the solutions would converge towards $0$ for some norm, but, as we see in Subsection \ref{invdiff}, invariance in a weaker norm often implies invariance in a stronger norm. This would contradict the invariance of the measure (it cannot both be invariant and converge to a Dirac delta centred in $0$ when time goes to $\pm\infty$). 
\end{remark}

Let us give some details on the plan of the paper. In the next section, we will provide the necessary notations, introducing the functional spaces and the measures we will deal with. In section \ref{prev} we will review and discuss some known results that will be the main ingredients in the proof of Theorem \ref{teo}: in particular, we will recall some generalities on Feynman-Kac theory for oscillatory processes and Prokhorov and Skorokhod Theorems. 
In subsection \ref{invdiff} we will show how to adapt our previous result of \cite{lastbaby} to the present functional setting. 
Section \ref{proofs} will be devoted to the proof of our main Theorem, that will be divided in several steps. First of all, we shall state two technical results (Lemmas \ref{lem-magicwithoutder}-\ref{lem-magicwithder}) in which we prove some uniform $N$ bounds for two crucial probability integrals; in subsection \eqref{conv} we prove the convergence of the invariant measures of \eqref{NLS1} for $N\rightarrow+\infty$ towards a limit 
$\rho$. 
Then, we prove the tightness of the family of measures $\nu_N$  (subsection \ref{tightsec}), the existence of a weak flow for equation \eqref{NLS2} as an application of Skorokhod Theorem (subsection \ref{weakflow}) and, eventually, we discussed the so-called invariance of the limit measure $\rho$ under the weak flow in subsection \ref{finalinv}.

\section{Notations}

In this section we fix up useful notations for the rest of the paper.

\subsection{Spaces}

Let $-2 \leq \sigma < -\frac74$. Let $\varepsilon > 0$ and $\alpha \in ]0,1[$. Let $T\in \R$.

Given any variable $x$, we use the standard notations for $\an{x} = \sqrt{1+x^2}$ and $D_x = \sqrt{1-\partial_x^2}$. We will denote with $S(\tau) = e^{i\tau \lap}$.

For $\varphi$ a non-negative increasing function, let $\mX_\varphi$ be the space induced by the norm
\begin{equation}\label{spaceX1} 
\|f\|_{\mX_\varphi} = \|(1+\varphi)^{-1}\an{x}^{-6} D_x^{\sigma}  f\|_{L^2}.
\end{equation}
Even though this is not one of the spaces that we used in \cite{lastbaby}  to prove the invariance of some measure $\rho$ under the flow of Schr\"odinger with a localised non linearity, one can prove that we have invariance in the topological $\sigma$ algebra of this space for density reasons. We take the regularity to be less than two orders where one has invariance and the weights to be three times what they should be such that the derivative in time of the solution to $i\partial_t u = -\lap u + |u|^2u$ is in this space too. In view of what has been done in \cite{burqtzv}, this loss of derivative is maybe superfluous.  The weight $\varphi$ is needed as an artefact of the proof and might be unnecessary.

For convenience reasons, we introduce the space $\mathcal Z_\varphi$ induced by the norm
\begin{equation}\label{spaceZ1} 
\|f\|_{\mathcal Z_\varphi} = \|\an{x}^{-2}(1+\varphi)^{-1} D_x^{\sigma+2}  f\|_{L^2} + \|\an{x}^{-2}(1+\varphi)^{-1/3} f\|_{L^6}.
\end{equation}

Let $\mXe_\varphi$ be the space induced by the norm
\begin{equation}\label{spaceY1}
\|f\|_{\mXe_\varphi} = \|(1+\varphi)^{-1}\an{x}^{-6(1+\varepsilon)} D_x^{\sigma(1+\varepsilon)} f\|_{L^2}.
\end{equation}
We will prove later that the balls of $\mX_\varphi$ are compact in $\mXe_\varphi$.

Let $\mX_{T,\varphi}$ and $\mXe_{T,\varphi}$ be the spaces defined as
\begin{equation}\label{spaceXY}
\mX_{T,\varphi} = \mathcal C^\alpha([-T,T],\mX_\varphi)\mbox{ and } \mXe_{T,\varphi} = \mathcal C([-T,T],\mXe_\varphi)
\end{equation}
where the index $\alpha$ is related to Lipschitz continuity in the sense that
$$
\|f\|_{\mX_{T,\varphi}} = \sup_{t_1,t_2 \in [-T,T]} \frac{\|f(t_1) - f(t_2)\|_{\mX_\varphi}}{|t_1-t_2|^\alpha} + \|f\|_{L^\infty([-T,T],\mX_\varphi)}.
$$
The idea is that the balls of $\mX_{T,\varphi}$ are compact in $\mXe_{T,\varphi}$.

Let $\textbf{m}$ be a measure and $p \in [1,\infty]$. By $L_{\textbf{m}}^p$ we denote the space induced by the norm
$$
\|F\|_{L^p_{\textbf{m}}} = \Big( \int |F(u)|^p d\textbf{m}(u) \Big)^{1/p}
$$
if $p<\infty$ or 
$$
\|F\|_{L^\infty_{\textbf{m}}} = \sup \{\lambda \geq 0\, |\, \textbf{m} (|F|\geq \lambda) \neq 0\}.
$$

\subsection{Measures}

Let $\mu_N$ be the measure defined as 
$$
d\mu_N (u) = D_N^{-1} e^{-\frac12 \int_{-N}^N |u(x)|^4dx} dq(u)
$$
where $q$ is the complex valued oscillator process given in the book by Simon \cite{SIMfunint}. We give more details about this process in Subsection \ref{subsec-simon}, and $D_N$ is the $L^1_q$ norm of $e^{-\frac12 \int_{-N}^N |u(x)|^4dx}$. We remark that $D_N$ goes to $0$ when $N$ goes to $\infty$.  

Let $\chi_N$ be $\mathcal C^\infty$ functions with compact supports such that for all $x\in \R$, $\chi_N(x) \in [0,1]$,
\begin{equation}\label{defchiN} 
\chi_N(x) = 1 \mbox{ on } [-N,N] \mbox{ and } \chi_N(x) = 0 \mbox{ outside } [-N-D_N^3, N+D_N^3].
\end{equation}

We call $\rho_N$ the invariant measure defined in \cite{lastbaby} under the flow of
\begin{equation}\label{NLSN}
i\partial_t u = -\lap u + \chi_N |u|^2u.
\end{equation}
We call $\psi_N$ the flow of this equation.

Let $\nu_N$ be the measure defined on the topological $\sigma$ algebra of $\mX_{T,\varphi}$ as for all $A$
\begin{equation}\label{defnuN}
\nu_N(A) = \rho_N\Big( \big\{ u_0 \, \Big| \, t\mapsto\psi_N(t) u_0 \in A\big\}\Big).
\end{equation}

\section{Previous results and corollary}\label{prev}

\subsection{Convergence in the whole line}\label{subsec-simon}

We begin with the following definition.
\begin{definition}\label{oscdef}
A family $\{q(x)\}_{x\in\mathbb{R}}$ of Gaussian random variables is called an \emph{oscillator process} or Ornstein-Uhlenbeck process if
$$
\mathbb{E}(q(x)q(y))=\frac12 e^{-|x-y|}.
$$
We will denote with $dq$ the measure on paths $\omega(x)$ associated to the oscillator process.
\end{definition}

In analogy with what happens with Brownian motions, it is natural to link oscillator processes with some suitable semi group. We explain this connection in the following result.

\begin{proposition}\label{semig}
Let $L_0=-\frac12\frac{d^2}{dx^2}+\frac12x^2-\frac12$ and $\Omega_0(x)=\pi^{-1/4}e^{-(1/2)x^2}$ so that $L_0\Omega_0=0$ and $\int|\Omega_0|^2=1$. Let moreover $f_0,\dots f_n\in L^\infty (\mathbb{R})$ and let $-\infty<y_0<\dots y_n<\infty$. Then
\begin{equation*}
\mathbb{E}(f_0(q(y_0)),\dots f_n(q(y_n)))=(\Omega_0,M_{f_0} e^{-x_1L_0}M_{f_1}\dots e^{-x_nL_0}M_{f_n}\Omega_0)_{L^2}
\end{equation*}
where $x_i=y_i-y_{i-1}>0$, $(\cdot,\cdot)_{L^2}$ denotes here the standard $L^2$ scalar product and $M_f$ the multiplication operator $M_f g(x)=f(x)g(x)$.
\end{proposition}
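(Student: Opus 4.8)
The plan is to use that the oscillator process $\{q(x)\}_{x\in\mathbb{R}}$ is a stationary Markov process whose one-dimensional marginals all have density $|\Omega_0(x)|^2$, to reduce the correlation $\mathbb{E}(f_0(q(y_0))\cdots f_n(q(y_n)))$ to a composition of multiplication operators and transition-semigroup operators, and then to identify that transition semigroup, after conjugation by the ground state $\Omega_0$, with $e^{-\tau L_0}$. Concretely, the oscillator process is the stationary solution of the Langevin equation $dq_x=-q_x\,dx+dW_x$; computing the covariance of this Gaussian process recovers $\mathbb{E}(q(x)q(y))=\tfrac12 e^{-|x-y|}$ and shows that its invariant law is the centred Gaussian of variance $\tfrac12$, that is, the probability measure with density $|\Omega_0|^2$. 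In particular $q$ is Markov, and its transition semigroup $(T_\tau)_{\tau\ge 0}$, $(T_\tau f)(a)=\mathbb{E}(f(q(\tau))\,|\,q(0)=a)$, is a self-adjoint contraction semigroup on $L^2(|\Omega_0|^2\,dx)$.

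Granting this, the Markov property together with stationarity gives, with $x_i=y_i-y_{i-1}$ and $\mathbf 1$ the constant function,
\[
\mathbb{E}\Big(\prod_{i=0}^n f_i(q(y_i))\Big)=\int_{\mathbb{R}} |\Omega_0(a)|^2\, f_0(a)\, \big(T_{x_1}M_{f_1}T_{x_2}M_{f_2}\cdots T_{x_n}M_{f_n}\mathbf 1\big)(a)\,da,
\]
all operators being bounded ($M_{f_i}$ because $f_i\in L^\infty$, $T_\tau$ because it is a contraction). Next I would introduce the unitary $U:L^2(|\Omega_0|^2\,dx)\to L^2(dx)$, $Ug=\Omega_0 g$ (unitary since $\Omega_0>0$), which satisfies $U\mathbf 1=\Omega_0$ and $UM_fU^{-1}=M_f$. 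Since $\Omega_0$ is real, $\int |\Omega_0(a)|^2 f_0(a)\,h(a)\,da=(\Omega_0, M_{f_0}(\Omega_0 h))_{L^2(dx)}$, so inserting $U^{-1}U$ between consecutive factors $T_{x_i}$, $M_{f_i}$ turns the right-hand side above into $(\Omega_0,\,M_{f_0}\,(UT_{x_1}U^{-1})\,M_{f_1}\cdots (UT_{x_n}U^{-1})\,M_{f_n}\,\Omega_0)_{L^2}$. Hence the proposition reduces to the identity $UT_\tau U^{-1}=e^{-\tau L_0}$; the complex-valued case $f_i\in L^\infty(\mathbb{R};\mathbb{C})$ then follows from the real one by multilinearity in the $f_i$.

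It remains to prove $UT_\tau U^{-1}=e^{-\tau L_0}$, i.e. that conjugating the Ornstein-Uhlenbeck generator $A=\tfrac12\partial_x^2-x\partial_x$ by $\Omega_0$ yields $-L_0$ — this is precisely the ground-state (Doob) transform of the harmonic oscillator. A short computation, using $\Omega_0'/\Omega_0=-x$, gives for Schwartz $\psi$
\[
\Omega_0\,A\big(\Omega_0^{-1}\psi\big)=\tfrac12\psi''-\tfrac12(x^2-1)\psi=-L_0\psi,
\]
so $UAU^{-1}=-L_0$ on the common core of Hermite functions (simultaneous eigenfunctions of $A$ and $L_0$), whence the semigroup identity follows by uniqueness. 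Alternatively one may simply invoke Mehler's formula, which gives the kernel $p_\tau(a,b)$ of $T_\tau$ explicitly and lets one check directly that $\Omega_0(a)\,p_\tau(a,b)\,\Omega_0(b)^{-1}$ is the Mehler kernel of $e^{-\tau L_0}$. I do not anticipate a genuine obstacle here: the only delicate points are the routine functional-analytic ones (boundedness of the factors, and passing from the generator identity to the semigroup identity on the Hermite core), and the substance of the statement is Mehler's formula repackaged in the language of the oscillator process.
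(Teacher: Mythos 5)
Your proof is correct and is essentially the argument behind Theorem 4.7 in Simon's book, which the paper cites verbatim as its entire proof: stationarity and the Markov property of the Ornstein--Uhlenbeck process reduce the correlation function to a chain of multiplication and transition-semigroup operators against the invariant density $|\Omega_0|^2$, and the ground-state (Doob) conjugation by $\Omega_0$ identifies the transition semigroup with $e^{-\tau L_0}$ (equivalently, one reads this off Mehler's formula). Since the paper offers no independent argument, there is nothing to contrast; your write-out is a faithful and complete expansion of the cited proof.
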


\begin{proof}
See \cite{SIMfunint} Theorem 4.7 pag. 37.
\end{proof}

\begin{remark}
Proposition \ref{semig} yields an explicit kernel $Q_x(u_1,u_2)$ for the semi group $e^{-xL_0}$. Following the lines of the proof indeed we have
$$
e^{-xL_0}f(u_1)=\int Q_x(u_1,u_2)f(u_2)du_2
$$
where
$$
Q_x(u_1,u_2)=\frac1{\sqrt{\pi(1-e^{-2x})}}\exp\left(-\frac{\frac12(u_1^2+u_2^2)(1+e^{-2x})-2e^{-x}u_1u_2}{1-e^{-2x}}\right)
$$
This is known as Mehler's formula.

We remark that $Q_x$ is smooth and that for all $t\geq 0$ and $u_1\neq u_2$
\begin{equation}\label{estimQ1}
Q_x(u_1,u_2) \lesssim 1+ |u_1-u_2|^{-1} \mbox{ and } |\partial_x Q_x(u_1,u_2)| \lesssim 1+ |u_1-u_2|^{-3/2}|u_1+u_2|.
\end{equation}
We also have that $\partial_xQ(x=0) = 0$ and 
\begin{equation}\label{estimQ2}
 |\partial_x^2 Q_x(u_1,u_2)| \lesssim 1+ |u_1-u_2|^{-5/2}|u_1+u_2|^2.
\end{equation}
And finally, we get that $\partial_x^2 Q_x(x=0) = 0$ and 
\begin{equation}\label{estimQ3}
 |\partial_x^3 Q_x(u_1,u_2)| \lesssim 1+ |u_1-u_2|^{-7/2}|u_1+u_2|^3.
\end{equation}
\end{remark}

\begin{remark}
Actually, minor modifications in the proof allow to adapt this result to higher dimensions: in this case the natural semi group will be indeed given by $L_0=-\frac12\Delta+\frac12x^2-\frac12$.
\end{remark}

The next step consists now in giving the analogue of Proposition \ref{semig} in a slightly more general setting, i.e. to relate the semi group $e^{-xL}$ with $L=L_0+V$ for some suitable potential $V$ to path integrals. Results of this kind have been widely investigated in literature, especially in the case of Brownian motion, and are usually referred to as \emph{Feynman-Kac formulas}. In what follows $V$ will be any polynomial bounded from below, so that $E(V)=\inf {\rm spec}(L_0+V)$ is a simple eigenvalue with an associated strictly positive eigenvector $\Omega_V$ (some more general potentials can be considered, but we do not strive to cover the most general case here as discussed in \cite{ReedSimonIV}). We will denote with $\hat{L}=L_0+V-E(V)$.

\begin{definition}\label{Pphi}
We define the $P(\phi)_1$-process as the stochastic process with joint distribution of $q(x_1),\dots q(x_n)$ ($x_1<\dots <x_n$):
$$
\Omega_V(u_1)\Omega_V(u_n)e^{-y_1\hat {L}}(u_1,u_2)\dots e^{-y_{n-1}\hat{L}}(u_{n-1},u_n)
$$ 
where $e^{-y\hat{L}}(a,b)$ is the integral kernel of $e^{-s\hat{L}}$ and $y_i=x_{i+1}-x_i$. We will denote with $d\rho_V$ the corresponding measure.
\end{definition}

\begin{remark}\label{decayOmega}
In view of what will follow in the next section, it is important to give some estimate on the ground state $\Omega_V(x)$ (which is a regular function), with $V\cong |x|^4$. This can be done by means of the so called WKB approximation scheme, which gives the asymptotic behaviour $\Omega_{|x|^4}(x)\cong e^{-|x|^3}$ for $|x|\rightarrow+\infty$. We refer to \cite{benderwu}, \cite{turbiner05} for details.
\end{remark}

Therefore, we are ready to state the following result.

\begin{theorem}\label{feykac}[Feynman-Kac]
For any smooth and bounded test function $G : \mathcal C(\R,\C) \rightarrow \R$, we have 
\begin{equation*}
\int G(u) d\rho_V(u)=\lim_{N\rightarrow+\infty}D_N^{-1}\int G(q) e^{-\int_{-N}^NV(q(s))ds}dq
\end{equation*}
where $D_N$ is the $L^1(dq)$ norm of $e^{-\int_{-N}^NV(q(s))ds}$
\end{theorem}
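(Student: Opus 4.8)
The plan is to establish the identity first for cylinder test functions and then promote it to all bounded continuous (in particular smooth bounded) $G$ by a soft weak-convergence argument. For the reduction I would show that the family $\mu_N:=D_N^{-1}e^{-\int_{-N}^N V(q(s))\,ds}\,dq$ is tight on $\mathcal C(\R,\C)$ equipped with the topology of uniform convergence on compact sets, via a Kolmogorov--Chentsov criterion: it suffices to have, uniformly in $N$ and locally uniformly in $x,y$, bounds $\E_{\mu_N}(|q(x)|^{2k})\lesssim 1$ and $\E_{\mu_N}(|q(x)-q(y)|^{2k})\lesssim|x-y|^{k}$, and these can be read off from the explicit finite-dimensional densities of $\mu_N$ together with the smoothing and decay properties of $e^{-s\hat L}$ and of Mehler's kernel $Q_x$ (in the limit they boil down to $\int|u|^{2k}\Omega_V(u)^2\,du<\infty$, which is finite thanks to the WKB decay $\Omega_V(u)\cong e^{-|u|^3}$ of Remark \ref{decayOmega}). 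Since $\rho_V$ is a genuine probability measure on $\mathcal C(\R,\C)$ whose Borel $\sigma$-algebra is generated by the cylinder sets, once we know convergence of the integrals against every cylinder $G$, tightness together with Prokhorov's theorem forces $\mu_N\rightharpoonup\rho_V$, which is precisely the claimed identity for every bounded continuous $G$.

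So let $G(u)=\prod_{i=1}^n f_i(u(y_i))$ with $f_1,\dots,f_n$ bounded continuous and $y_1<\dots<y_n$, and fix $N$ large enough that $-N<y_1$ and $y_n<N$. I would discretize the exponent: choosing a partition of $[-N,N]$ of mesh $\Delta\to 0$ that contains the points $y_i$, the Riemann sums $\sum_j V(q(s_j))\Delta$ converge almost surely (the paths of $q$ are continuous) and boundedly (as $V$ is bounded below, $e^{-\sum_j V(q(s_j))\Delta}$ is bounded uniformly in the partition, for $N$ fixed) to $\int_{-N}^N V(q(s))\,ds$, so dominated convergence gives $\E_q\big(\prod_i f_i(q(y_i))e^{-\int_{-N}^N V}\big)=\lim_{\Delta\to0}\E_q\big(\prod_i f_i(q(y_i))e^{-\sum_j V(q(s_j))\Delta}\big)$. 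The integrand on the right is a cylinder function, so Proposition \ref{semig} rewrites it as an $L^2$ matrix element of an alternating product of the free heat operators $e^{-\Delta L_0}$, the multiplications $M_{e^{-\Delta V}}$ and the $M_{f_i}$, placed between two copies of $\Omega_0$. Letting $\Delta\to0$ and invoking the Trotter--Kato product formula $(e^{-\Delta L_0}M_{e^{-\Delta V}})^{m}\to e^{-\tau L}$ strongly (with $m\Delta=\tau$; here $L:=L_0+V$ is essentially self-adjoint and $\geq E(V)$ because $V$ is a polynomial bounded below, so $e^{-\tau L}$ is a well-defined contraction and the boundary factors $M_{e^{-\Delta V}}$, $e^{-\Delta L_0}$ tend strongly to the identity), the blocks between consecutive special points collapse and we obtain
\begin{equation*}
\E_q\Big(\prod_{i=1}^n f_i(q(y_i))\,e^{-\int_{-N}^N V}\Big)=\big(\Omega_0,\,e^{-(y_1+N)L}M_{f_1}e^{-(y_2-y_1)L}M_{f_2}\cdots M_{f_n}e^{-(N-y_n)L}\Omega_0\big)_{L^2};
\end{equation*}
taking $n=0$ gives $D_N=(\Omega_0,e^{-2NL}\Omega_0)_{L^2}$.

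It remains to pass to the limit $N\to+\infty$ in the quotient. Writing $L=\hat L+E(V)$ and factoring the common scalar $e^{-2NE(V)}$ out of both the displayed matrix element and $D_N$ (the $\hat L$-exponents in the numerator add up to $2N$), the ratio $D_N^{-1}\E_q(\prod_i f_i(q(y_i))e^{-\int_{-N}^N V})$ becomes a quotient of $L^2$ matrix elements involving only the semigroup $e^{-s\hat L}$. Since $\hat L\ge0$ with $\ker\hat L=\C\Omega_V$ and $\|\Omega_V\|_{L^2}=1$, the spectral theorem gives $e^{-t\hat L}\to P_{\Omega_V}$ strongly as $t\to+\infty$, where $P_{\Omega_V}\psi=(\Omega_V,\psi)_{L^2}\,\Omega_V$; applying this with $t=y_1+N$ and $t=N-y_n$ at the two ends (the middle factor $M_{f_1}e^{-(y_2-y_1)\hat L}M_{f_2}\cdots M_{f_n}$ being a fixed bounded operator), the numerator tends to $(\Omega_0,\Omega_V)_{L^2}^2\,\big(\Omega_V,M_{f_1}e^{-(y_2-y_1)\hat L}M_{f_2}\cdots M_{f_n}\Omega_V\big)_{L^2}$ while $D_N\to(\Omega_0,\Omega_V)_{L^2}^2$. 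The positive constant $(\Omega_0,\Omega_V)_{L^2}^2$ cancels, leaving $\big(\Omega_V,M_{f_1}e^{-(y_2-y_1)\hat L}M_{f_2}\cdots M_{f_n}\Omega_V\big)_{L^2}$, which is exactly $\int G\,d\rho_V$ by Definition \ref{Pphi}. I expect the main obstacle to be this Trotter--Kato step — that is, rigorously reconstructing the Feynman--Kac formula for the oscillator process, including essential self-adjointness of $L_0+V$ and genuine convergence of the product formula with the possibly unbounded polynomial $V$ — together with the uniform-in-$N$ moment bounds behind tightness; both are classical in the constructive field theory literature and the relevant statements can be borrowed from \cite{SIMfunint} and \cite{ReedSimonIV}.
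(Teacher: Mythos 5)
The paper's ``proof'' of Theorem~\ref{feykac} is simply a citation to Simon's Theorems~6.7 and~6.9 together with a remark that the complex case follows by mimicking the proof of his Theorem~6.1, so there is no argument to compare against line by line; what you have written is a correct reconstruction of exactly that classical argument. Your second and third paragraphs (Trotter--Kato to get the Feynman--Kac identity for cylinder $G$ at fixed $N$, then factoring out $e^{-2NE(V)}$ and using $e^{-t\hat L}\to P_{\Omega_V}$ strongly to take $N\to\infty$, with the $(\Omega_0,\Omega_V)^2$ cancelling between numerator and $D_N$) are precisely the content of Simon's 6.7 and 6.9. Your first paragraph (tightness of $(\mu_N)$ on $\mathcal C(\R,\C)$ via moment bounds plus Prokhorov, to pass from cylinder functions to all bounded continuous $G$) is an additional step not explicitly in Simon but genuinely needed here, since the paper's statement allows arbitrary smooth bounded $G$ rather than cylinder functionals; note that this is also essentially how the paper itself gets uniform moment control later (compare Lemmas~\ref{lem-magicwithoutder}--\ref{lem-magicwithder}, whose proofs use only the finite-$N$ identity, so there is no circularity). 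Two things worth making explicit if you were to flesh this out: (i) the order of the argument should be cylinder case first, then tightness (since the moment bounds are read off from the finite-dimensional densities produced by the cylinder identity), as you implicitly acknowledge; and (ii) the process here is $\C$-valued, so $q$ splits into two independent real oscillator processes (Proposition~\ref{qvar}) and $L_0$, $L$, $\Omega_0$, $\Omega_V$ live on $L^2(\R^2)$---the argument is dimension-agnostic, but the statement should acknowledge this, as the paper's remark on ``the complex case'' does.
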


\begin{proof}
See \cite{SIMfunint} Theorems 6.7 and 6.9 pag 58. Notice that, by mimicking the proof of Theorem 6.1 there, it is possible to deal also with the complex case.
\end{proof}

\begin{remark}
This Theorem implies in particular the convergence of the sequence $\mu_N$ as defined in the introduction: the choice of the potential $V=|x|^4$ falls indeed within the assumptions we made for Definition \ref{Pphi} and therefore for applying Feynman-Kac Theorem. In what follows, we will omit the dependence on $V=|x|^4$ for the limit measure simply denoting it with $\rho$.
\end{remark}

The reason for introducing all this framework is in the following result, in which we show that the Gaussian part of the invariant measure for NLS built in \cite{lastbaby} is a complex valued oscillator process in the sense of the next proposition.

\begin{proposition}\label{qvar}
Suppose $W_n(\omega)$ is the reunion of two complex, independent Wiener processes in $n$, $W^1_n(\omega)$, $W^2_n(\omega)$ and let
\begin{equation}\label{ranvar}
\phi(x)=\frac1{2\pi}\int_\R \frac{e^{inx}}{\sqrt{1+n^2}}dW_n(\omega)
\end{equation}
where $x\in\R$ and $\omega\in\Omega$. Then it is possible to decompose 
$$\phi(x)=\phi_1(x)+i\phi_2(x)$$
where $\phi_1$, $\phi_2$ are real-valued and independent. Moreover, each $\phi_j(x)$ is an oscillator process, as in Definition \ref{oscdef}. 

\end{proposition}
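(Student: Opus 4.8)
The plan is to compute the covariance function $\E(\phi_j(x)\phi_j(y))$ directly from the stochastic integral representation \eqref{ranvar} and to verify that it equals $\tfrac12 e^{-|x-y|}$, which is exactly the defining property of an oscillator process in Definition \ref{oscdef}; the splitting into real and imaginary parts and their independence will come essentially for free from the structure of $W_n = W_n^1 + i W_n^2$ with $W_n^1,W_n^2$ independent. First I would make sense of \eqref{ranvar} as an It\^o integral: since $dW_n$ is (complex) white noise in the variable $n$, the stochastic integral $\int_\R f(n)\,dW_n$ is a centred Gaussian with $\E\big(\int f\,dW_n\,\overline{\int g\,dW_n}\big)$ given by (a constant times) $\int f(n)\overline{g(n)}\,dn$, and $\E\big(\int f\,dW_n\int g\,dW_n\big)=0$ because the "holomorphic–holomorphic" second moment of complex white noise vanishes. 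Here $f(n) = \frac{1}{2\pi}\frac{e^{inx}}{\sqrt{1+n^2}}$, which is in $L^2(\R)$, so everything is well-defined.

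Next I would extract $\phi_1$ and $\phi_2$. Writing $dW_n = dW_n^1 + i\,dW_n^2$ and $e^{inx} = \cos(nx) + i\sin(nx)$, one gets
$$
\phi(x) = \frac{1}{2\pi}\int_\R \frac{\cos(nx)\,dW_n^1 - \sin(nx)\,dW_n^2}{\sqrt{1+n^2}} + \frac{i}{2\pi}\int_\R \frac{\sin(nx)\,dW_n^1 + \cos(nx)\,dW_n^2}{\sqrt{1+n^2}},
$$
so $\phi_1$ is the first bracket and $\phi_2$ the second, both manifestly real-valued Gaussian processes. For independence of $\phi_1$ and $\phi_2$: being jointly Gaussian, it suffices to check $\E(\phi_1(x)\phi_2(y))=0$ for all $x,y$; expanding and using that $W^1$ and $W^2$ are independent with the same white-noise covariance, the cross term reduces to $\int \frac{\cos(nx)\sin(ny) - \sin(nx)\cos(ny)}{1+n^2}\,dn = \int \frac{\sin(n(y-x))}{1+n^2}\,dn = 0$ by oddness of the integrand. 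Finally, for the covariance of each $\phi_j$, the same computation gives $\E(\phi_1(x)\phi_1(y)) = \E(\phi_2(x)\phi_2(y)) = \frac{c}{(2\pi)^2}\int_\R \frac{\cos(n(x-y))}{1+n^2}\,dn$, and the classical Fourier identity $\int_\R \frac{\cos(n t)}{1+n^2}\,dn = \pi e^{-|t|}$ together with the correct normalisation of the white-noise covariance yields exactly $\tfrac12 e^{-|x-y|}$. Matching constants fixes the normalisation (this is the point of the $\frac{1}{2\pi}$ prefactor in \eqref{ranvar}).

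The only genuinely delicate point is making the stochastic calculus with a \emph{complex} Wiener process rigorous: one must be careful about the convention for $\E(dW_n\,dW_m)$ versus $\E(dW_n\,d\overline{W_m})$, since it is precisely the vanishing of the former that both decouples $\phi_1,\phi_2$ and makes the two marginal covariances equal. I would handle this by first truncating and discretising — replacing $\int_\R$ by a finite sum over a lattice, as in the $\varphi_L$ construction recalled in the introduction — computing the (finite-dimensional) Gaussian covariances there, and then passing to the limit using $L^2(dn)$-convergence of the integrands and continuity of the It\^o isometry. Once the covariance $\tfrac12 e^{-|x-y|}$ is established, there is nothing left to prove: a centred Gaussian process is determined by its covariance, so each $\phi_j$ \emph{is} an oscillator process by definition. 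I expect the bookkeeping of the white-noise normalisation to be the main (mild) obstacle; the structural claims are immediate once the isometry is set up correctly.
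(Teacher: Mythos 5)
Your proposal follows essentially the same route as the paper: split $\phi$ into real and imaginary parts via $e^{inx}=\cos(nx)+i\sin(nx)$, show $\E(\phi_1(x)\phi_2(y))=0$ because $\int \sin(n(x-y))/(1+n^2)\,dn$ vanishes by oddness, and reduce the covariance of each $\phi_j$ to $\int_\R \cos(n(x-y))/(1+n^2)\,dn$. The one place you diverge is the Fourier identity $\int_\R \cos(nt)/(1+n^2)\,dn=\pi e^{-|t|}$: you invoke it as classical, whereas the paper actually proves it, by testing $F(x)=\int\cos(nx)/(1+n^2)\,dn$ against $(1-\partial_x^2)\psi$ for $\psi\in C_c^\infty$, integrating by parts against $\sinh|x|$ and $\cosh x$, and concluding $F=-\pi(\sinh|x|-\cosh x)=\pi e^{-|x|}$; citing the identity is perfectly acceptable, but if a self-contained proof is wanted, that is the missing ingredient. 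Finally, a small caveat applying to both you and the paper: the normalisation is not carried through carefully (the paper silently drops the $1/(2\pi)$ prefactor inside the proof and ends with covariance $\pi e^{-|x-y|}$ rather than the $\tfrac12 e^{-|x-y|}$ demanded by Definition~\ref{oscdef}), and your "matching constants fixes the normalisation" likewise waves at the issue without resolving it; the covariance is in any case a positive multiple of $e^{-|x-y|}$, which is what actually gets used downstream, but the constants deserve a careful bookkeeping of the complex white-noise convention $\E(\overline{dW_{n_1}}\,dW_{n_2})=\delta(n_1-n_2)\,dn_1$.
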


\begin{remark} Before we prove this proposition, we remark that the process $W_n$ is a random Gaussian field, see \cite{simonPphi2}, such that $W_0 = 0$ and 
$$
\E(\overline{dW_{n_1}} dW_{n_2}) = dn_1 \delta(n_1-n_2)
$$
or equivalently
$$
\E(\overline{W_{n_1}}W_{n_2})= \left \lbrace{\begin{array}{ll}
0 & \mbox{ if } n_1n_2 <0\\
\min (|n_1|,|n_2|) & \mbox{ otherwise.}
\end{array}} \right.
$$
\end{remark}

\begin{proof}
In view of our assumption on $W_n(\omega)$, we have
$$
\phi(x)=\int_{\R}\frac{\cos(n x) dW^1_n(\omega)-\sin(n x)dW^2_n(n)}{\sqrt{1+n^2}}
$$
$$
+i\int_{\R}\frac{\sin(n x) dW^1_n(\omega)+\cos(n x)dW^2_n(n)}{\sqrt{1+n^2}}
$$
$$
=\phi_1(x)+i\phi_2(x).
$$
To prove the independence, we rely on the Ito isometry to write
\begin{eqnarray*}
\mathbb{E}(\phi_1(x)\phi_2(y)))&=&\int_\R\frac{\cos(n x)\sin(n y)}{1+n^2}dn-
\int_\R\frac{\cos(n x)\sin(n y)}{1+n^2}dn
\\
&=&
\int_{\R}\frac{\sin (n(x-y))}{1+n^2}dn
\\
&=&0.
\end{eqnarray*}
Since $\phi_1$, $\phi_2$ are centred Gaussian variables, this implies that $\phi_1$ and $\phi_2$ are independent.

We now come to the second part of the proposition. First of all, we observe that
\begin{eqnarray}\label{exppro}
\mathbb{E}(\phi_1(x)\phi_1(y))&=&\mathbb{E}(\phi_2(x)\phi_2(y))
\\
\nonumber
&=&
\int_\R\frac{\cos(nx)\cos(ny)+\sin(nx)\sin(ny)}{1+n^2}dn
\\
\nonumber
&=&
\int_\R \frac{\cos(n(x-y))}{1+n^2}dn.
\end{eqnarray}
Let us then consider the function $\displaystyle F(x)=\int_\R \frac{\cos(nx)}{1+n^2}dn$: we aim to prove that
\begin{equation}\label{parti1}
F(x)=\alpha \sinh(|x|)+\beta \cosh (x).
\end{equation}
Let $\psi\in C^\infty_c$ be a test function. We have by considering Fourier transform
\begin{eqnarray*}
\langle F,(1-\partial_x^2)\psi\rangle&=&
\big\langle \int_\R \cos(nx),\psi\big\rangle
\\
&=& {\rm Re}\left[\int_\R dx \psi(x)\int_\R e^{inx}dn\right]
\\
&=&
{\rm Re}\left[\int_\R dn\int_\R e^{inx}\psi(x)dx\right]
\\
&=&
\sqrt{2\pi}{\rm Re}\left[\int_\R\hat{\psi}(n)dn\right]
\\
&=&
2\pi \psi(0).
\end{eqnarray*}
On the other hand,
\begin{eqnarray*}
\langle \sinh(|x|),(1-\partial_x^2)\psi\rangle&=&
\int_0^{+\infty}\sinh (x)(1-\partial_x^2)\psi-
\int_{-\infty}^0\sinh (x)(1-\partial_x^2)\psi.
\\
&=& 
I+II.
\end{eqnarray*}
The first integral gives
$$
I=\int_o^{+\infty}\sinh (x)\psi-\int_0^{+\infty}\sinh (x)\partial^2_x\psi=I_1-I_2
$$
where integrating by parts
\begin{eqnarray*}
I_2&=&\sinh (x)\partial_x\psi\big|^{+\infty}_0-\int_0^{+\infty}\cosh (x)\partial_x\psi
\\
&=&
-\cosh(x)\psi\big|^{+\infty}_0+\int_0^{+\infty}\sinh(x)\psi
\\
&=&
\psi(0)+I_1
\end{eqnarray*}
and thus $I=-\psi(0)$.

Analogously,
\begin{eqnarray*}
-II&=&\int^0_{-\infty}\sinh(x)(1-\partial^2_x)\psi
\\
&=&II_1-II_2,
\end{eqnarray*}
and
\begin{eqnarray*}
II_2&=& \int_{-\infty}^0 \sinh(x)\partial^2_x\psi
\\
&=&
\sinh(x)\partial_x\psi\big|^0_{-\infty}-\int_{-\infty}^0\cosh(x)\partial_x\psi
\\
&=&
-\cosh(x)\psi\big|^0_{-\infty}+\int_{-\infty}^0\sinh(x)\psi
\\
&=&
-\psi(0)+II_1
\end{eqnarray*}
which implies $-II=\psi(0)$. Therefore, we have showed that
$$
\langle \sinh(|x|),(1-\partial^2_x)\psi\rangle=-2\psi(0).
$$
On the other hand,
$$
\langle \cosh(x),(1-\partial_x^2)\psi\rangle=\langle (1-\partial_x^2)\cosh(x),\psi\rangle=0.
$$
Putting all together, we thus have
$$
F(x)=-\pi(\sinh(|x|)-\cosh(x)),
$$
as $F(0)=\pi$. Hence, for $x\geq0$, we have
$$
F(x)=\pi\left(\frac{e^x+e^{-x}-e^x+e^{-x}}2\right)=\pi e^{-x}.
$$
Getting back to \eqref{exppro} this gives, when $x\geq y$, 
\begin{equation*}
\mathbb{E}(\phi_j(x)\phi_j(y))=\pi e^{-(x-y)},\qquad j=1,2
\end{equation*}
and this concludes the proof.

\end{proof}

\begin{remark}We can also remark that $e^{-|x-y|}$ is the Green function of the operator $(1-\partial_x^2)^{-1}$.\end{remark}

As a concluding result for this subsection, we give the following Proposition which is just a consequence of what we have seen so far.

\begin{proposition}\label{ovvio}
Let $\rho_N$ be the invariant measure defined in \cite{lastbaby}. Then
\begin{equation}\label{invdq}
\displaystyle
d\rho_N(u)=\frac1{D'_N} e^{-\int \chi_N|u(x)|^4dx}dq(u).
\end{equation}
where $D'_N$ is the $L^1(dq)$ norm of $e^{-\int \chi_N|u(x)|^4dx}$.
\end{proposition}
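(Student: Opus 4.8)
The plan is to identify the invariant measure $\rho_N$ constructed in \cite{lastbaby} as a Gibbs-type measure $Z^{-1} e^{-\mathcal{E}_N(u)} du$ and then to recognize that, once one integrates against the Gaussian (oscillator) part, the quadratic and gradient parts of the energy are precisely what produces the measure $dq$. Concretely, in \cite{lastbaby} the measure $\rho_N$ is built (formally) as $e^{-H_N(u)}\,du$ where $H_N(u) = \frac12\int |\partial_x u|^2 + \frac12\int |u|^2 + \frac12\int \chi_N |u|^4$, i.e. mass plus kinetic energy plus the (truncated, by the cutoff $\chi_N$) potential energy. The first two terms assemble into $\frac12 \langle (1-\partial_x^2) u, u\rangle$, whose formal Gaussian $e^{-\frac12\langle(1-\partial_x^2)u,u\rangle}du$ is exactly the law $dq$ of the complex oscillator process, by Proposition \ref{qvar} together with the remark that $e^{-|x-y|}$ is the Green function of $(1-\partial_x^2)^{-1}$ (so that the covariance $\frac12 e^{-|x-y|}$ of the oscillator process matches the covariance $(1-\partial_x^2)^{-1}$ dictated by the quadratic form). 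This is the content that lets us write $d\rho_N(u) = Z_N^{-1} e^{-\int \chi_N |u(x)|^4 dx}\,dq(u)$ for some normalizing constant $Z_N$.

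First I would make the formal computation above rigorous by recalling precisely how $\rho_N$ was defined in \cite{lastbaby}: it is obtained as a limit of finite-dimensional Gibbs measures on a box $2\pi L\,\mathbb{T}$, of the form $d\rho_{N,L}(u) = Z_{N,L}^{-1} e^{-\int \chi_N|u|^4}\,dq_L(u)$, where $q_L$ is the law of the truncated random series $\varphi_L$ described in the second remark after Theorem \ref{teo}. Since $\chi_N$ has compact support in a fixed interval $[-N-D_N^3, N+D_N^3]$, the factor $e^{-\int \chi_N|u|^4}$ is a bounded, continuous (indeed, for the relevant topology, sufficiently regular) functional that stabilizes as $L\to\infty$; and $q_L \to q$ in the sense made precise in \cite{lastbaby} (the Wiener-integral construction). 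Passing to the limit $L\to\infty$ on both sides — using that $e^{-\int\chi_N|u|^4} \le 1$ so dominated convergence applies to the normalizing integrals — yields \eqref{invdq} with $D'_N = \|e^{-\int\chi_N|u|^4}\|_{L^1(dq)} = \lim_L Z_{N,L}$. I would also note that $0 < D'_N \le 1$: finiteness and positivity of $D'_N$ follow because $q$-almost every path is locally bounded (being locally a continuous Gaussian process), so $\int\chi_N|u|^4 < \infty$ $q$-a.s., hence $e^{-\int\chi_N|u|^4} > 0$ $q$-a.s., and it is dominated by $1$.

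The main obstacle I anticipate is the bookkeeping of the limiting procedure from \cite{lastbaby}: one must check that the formula $d\rho_{N,L} = Z_{N,L}^{-1} e^{-\int\chi_N|u|^4} dq_L$ holds on the nose at finite $L$ (this is essentially the statement that the Gibbs measure for $-\lap u + \chi_N|u|^2 u$ with mass term decomposes as Gaussian-free-field $\times$ nonlinear weight — standard on the torus), and, more delicately, that the weak convergence $\rho_{N,L}\to\rho_N$ proved in \cite{lastbaby} is compatible with the weak convergence $q_L\to q$, so that the identity survives in the limit. Since $\chi_N$ is fixed and compactly supported, the nonlinear weight is genuinely continuous and bounded on the relevant path space, so the only real work is invoking the convergence $q_L\to q$ of \cite{lastbaby} and Theorem \ref{feykac}-style reasoning; there is no singularity to fight here (the singular limit is $N\to\infty$, handled later, not this one). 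Hence the proof is short: combine the finite-$L$ product formula, the two convergences, and dominated convergence for the constants, to obtain \eqref{invdq}.
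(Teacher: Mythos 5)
Your proposal is correct and matches the paper's (implicit) reasoning: the paper states this proposition without proof, calling it a direct consequence of Proposition \ref{qvar}, which identifies the Gaussian part of $\rho_N$ built in \cite{lastbaby} with the law $dq$ of the complex oscillator process. Your fleshing out of the box-limit construction of $\rho_N$ and the passage to the limit is precisely what the authors leave implicit, so nothing is missing; just note the normalization convention (factor of $\frac12$ in the quartic exponent) is used inconsistently between your $H_N$ and the final formula, echoing a small inconsistency already present between the paper's Proposition \ref{ovvio} and the later Lemma \ref{lem-boundDN}.
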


\paragraph{Additional remarks on where $\rho$ is supported}

We wish to prove that $\rho$ is supported by functions which are not localised, in particular, in the sense that they are not $L^2$. For this, we remark that, thanks to the description of $\rho$ as the $P(\phi)_1$ process given by Definition \ref{Pphi}, we have that $\rho$ is invariant under translations, that is for all test functions $F$ and all $x_0 \in \R$ : 
$$
\int F(f)d\rho(f) = \int F(f_{x_0})d\rho(f)
$$
with $f_{x_0}(x) = f(x-x_0)$. Besides, $f$ is $\rho$-almost surely continuous and the law of $f(x)$ is absolutely continuous with regard to the Lebesgue measure and with density $\Omega_V^2$, that is
$$
\rho(f(x) \in [f,f+df]) = \Omega_V^2(f)df.
$$

\begin{proposition}\label{prop-nonloc} The measure $\rho$ is supported by non localised functions in the sense that $\rho$ almost surely $f(x)$ does not go to $0$ when $x$ goes to $\infty$.
\end{proposition}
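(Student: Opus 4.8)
The plan is to exploit the two structural facts about $\rho$ recalled just above the statement: translation invariance, and that for each fixed $x$ the law of $f(x)$ under $\rho$ is absolutely continuous with density $\Omega_V^2$. The key point is that $\Omega_V^2$ is a fixed probability density which is not a Dirac mass at $0$, so there is a fixed $\delta>0$ and a fixed $p>0$ with $\rho(|f(x)|\geq\delta)=p$ for every $x$ (here $p=\int_{|y|\geq\delta}\Omega_V^2(y)\,dy$ does not depend on $x$ by translation invariance). If $f(x)\to 0$ as $x\to\infty$ with positive probability, then on that event $|f(n)|<\delta$ for all large $n$; I would like to conclude that this forces $p=0$, a contradiction. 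The honest way to get there is a Borel–Cantelli / ergodicity argument along the integer points $x=n$, using that the $P(\phi)_1$-process is a stationary Markov process.

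Concretely, I would proceed as follows. First, recall from Definition~\ref{Pphi} and Proposition~\ref{semig} that $\{f(n)\}_{n\in\Z}$ under $\rho$ is a stationary Markov chain whose one-step transition kernel is the Mehler-type kernel $e^{-\hat L}(a,b)$ associated with $L=L_0+V$, with invariant density $\Omega_V^2$. This chain is irreducible and aperiodic (the kernel $e^{-\hat L}(a,b)$ is everywhere strictly positive and smooth, being a Schrödinger heat kernel for a confining polynomial potential), hence it is positive Harris recurrent; in particular it is ergodic, and Birkhoff's ergodic theorem applies. Apply it to the bounded function $g(y)=\mathbf 1_{|y|\geq\delta}$: $\rho$-almost surely
\begin{equation*}
\frac1{M}\sum_{n=1}^{M}\mathbf 1_{|f(n)|\geq\delta}\longrightarrow \int \mathbf 1_{|y|\geq\delta}\,\Omega_V^2(y)\,dy = p > 0
\end{equation*}
as $M\to\infty$. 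In particular, $\rho$-almost surely $|f(n)|\geq\delta$ for infinitely many $n\geq 1$, so $f(n)\not\to 0$, and a fortiori $f(x)\not\to 0$ as $x\to\infty$. (The same argument run over $n\leq -1$ handles $x\to-\infty$.) Since $\delta>0$ is fixed, this shows $f$ is not localised; in particular $f\notin L^2(\R)$ $\rho$-almost surely, because an $L^2$ function must have $\liminf_{|x|\to\infty}|f(x)|=0$ along a sequence — and here one gets a uniform-in-$n$ lower bound $\delta$ infinitely often, which already rules out $f\in L^2$.

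The main obstacle is making the ergodic/recurrence input fully rigorous in this continuous-state setting: one must justify that the discrete skeleton $(f(n))_n$ is genuinely a stationary ergodic Markov chain with the claimed invariant law $\Omega_V^2\,dy$. This follows from the explicit finite-dimensional distributions in Definition~\ref{Pphi} (the chain structure is immediate from the semigroup property $e^{-(s+t)\hat L}=e^{-s\hat L}e^{-t\hat L}$, and stationarity from $\hat L\Omega_V=0$, i.e. $e^{-t\hat L}$ preserves $\Omega_V$), while ergodicity is a consequence of the spectral gap of $\hat L=L_0+V-E(V)$: the eigenvalue $0$ is simple, so the only $e^{-t\hat L}$-invariant $L^2(\Omega_V^2\,dy)$ functions are constants, which is exactly ergodicity of the stationary chain. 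If one prefers to avoid invoking abstract ergodic theory, a softer alternative suffices for the stated (qualitative) conclusion: the strict positivity of the transition kernel plus a one-step argument shows that $\rho(|f(n+1)|\geq\delta \mid f(n)=a)\geq c>0$ uniformly for $a$ in any compact set, which by a conditional Borel–Cantelli / Lévy zero-one argument again forces $|f(n)|\geq\delta$ infinitely often almost surely; I would likely present this lighter version in the text and remark that the ergodic theorem gives the sharper statement.
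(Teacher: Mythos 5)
Your argument is correct for the main claim and establishes it, but it takes a genuinely different (and heavier) route than the paper. The paper's proof is a short, purely measure-theoretic argument: it writes the event $\{f(x)\to 0\}$ as a nested intersection/union, invokes decreasing continuity of the measure and Fatou's lemma to bound
$\rho(f(x)\to 0)\leq\inf_\varepsilon\liminf_{x\to\infty}\rho(|f(x)|\leq\varepsilon)$,
then uses translation invariance to replace $\rho(|f(x)|\leq\varepsilon)$ by $\rho(|f(0)|\leq\varepsilon)$ and finally the absolute continuity of the law of $f(0)$ to shrink $\varepsilon\to 0$ and obtain $\rho(f(0)=0)=0$. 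No Markov structure, no ergodic theorem, no spectral-gap input is needed. Your proof instead leans on the full $P(\phi)_1$-process machinery: stationarity and the Markov property of the integer skeleton $(f(n))_n$, simplicity of the ground state $\Omega_V$ to get ergodicity, and Birkhoff's theorem. That buys you a quantitatively stronger conclusion (a positive asymptotic density of $n$ with $|f(n)|\geq\delta$, not merely $f(x)\not\to 0$), at the cost of substantially more machinery; the paper's argument is the more economical of the two for the statement as written.

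One flaw in a side remark: your parenthetical claim that ``$|f(n)|\geq\delta$ infinitely often already rules out $f\in L^2$'' is false as stated for a general measurable function, since an $L^2$ function can have arbitrarily tall and thin spikes at every integer. The paper's Proposition \ref{prop-nonloc2} handles $L^2$ separately and crucially uses $\rho$-a.s.\ continuity of $f$ (via the interval norms $\|f\|_R$). Your main conclusion $f(x)\not\to 0$ is unaffected, but the aside about $L^2$ needs the continuity input to be made rigorous.
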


\begin{proof}We compute the probability such that $f(x)$ goes to $0$ when $x$ goes to $\infty$. We have 
$$
\rho(f(x) \rightarrow 0) = \rho(\forall \varepsilon >0 \exists R \mbox{ such that } \forall x\geq R \; ,\; |f(x)|\leq \varepsilon).
$$
Writing everything in terms of sets, we have 
$$
\rho(f(x) \rightarrow 0) = \rho\Big(\bigcap_{\varepsilon >0} \bigcup_{ R\in \R}  \bigcap_{ x\geq R} ( |f(x)|\leq \varepsilon)\Big).
$$
Because of decreasing continuity of $\rho$, we have 
$$
\rho(f(x) \rightarrow 0) = \inf_\varepsilon\rho\Big(\bigcup_{ R\in \R}  \bigcap_{ x\geq R} ( |f(x)|\leq \varepsilon)\Big).
$$
Writing $\bigcup_{ R\in \R}  \bigcap_{ x\geq R}$ as a $\liminf$, we get
$$
\rho(f(x) \rightarrow 0) = \inf_\varepsilon\rho(\liminf_{x\rightarrow \infty}( |f(x)|\leq \varepsilon)).
$$
We use Fatou's lemma to get
$$
\rho(f(x) \rightarrow 0) \leq \inf_\varepsilon \liminf_{x\rightarrow \infty} \rho( |f(x)|\leq \varepsilon)
$$
and the invariance of $\rho$ under translations to get $\rho( |f(x)|\leq \varepsilon) = \rho( |f(0)|\leq \varepsilon)$ and thus
$$
\rho(f(x) \rightarrow 0) \leq \inf_\varepsilon  \rho( |f(0)|\leq \varepsilon).
$$
Finally, we use again the decreasing continuity of $\rho$ to get
$$
\rho(f(x) \rightarrow 0) \leq  \rho( f(0)=0).
$$
The law of $f(0)$ being absolutely continuous with regard to the Lebesgue measure, we have that 
$$
\rho(f(x) \rightarrow 0) =0.
$$
\end{proof}

\begin{proposition}\label{prop-nonloc2} The measure $\rho$ is supported by non localised functions in the sense that $\rho$ almost surely $f$ does not belong to $L^2$.
\end{proposition}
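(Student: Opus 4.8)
The plan is to prove the stronger assertion that $\int_0^{\infty}|f(x)|^2\,dx=+\infty$ for $\rho$-almost every $f$; this suffices, since $\rho$-a.s. $f$ is continuous (so the integral is well defined, possibly $+\infty$) and since it bounds $\|f\|_{L^2(\R)}^2$ from below. Set $g(x)=|f(x)|^2$. By translation invariance of $\rho$ the process $\{g(x)\}_{x\in\R}$ is stationary, and from the one-dimensional marginal $\rho(f(x)\in[w,w+dw])=\Omega_V^2(w)\,dw$ it has constant mean
$$
\E_\rho\big(g(0)\big)=\int|w|^2\,\Omega_V(w)^2\,dw=:c_0,
$$
which is finite and strictly positive: $\Omega_V>0$ everywhere, and $\Omega_V(w)\cong e^{-|w|^3}$ at infinity by Remark \ref{decayOmega}, so the integral converges.

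The crux is ergodicity of the $P(\phi)_1$-process of Definition \ref{Pphi} under spatial translations. This is a classical consequence of the spectral gap of $\hat L=L_0+V-E(V)$ --- which has $0$ as a simple, isolated lowest eigenvalue with strictly positive eigenvector $\Omega_V$, the potential $V\cong|x|^4$ being confining --- equivalently of the fact that the Markov semigroup $e^{-x\hat L}$ is positivity improving with a gap $\gamma=E_1(V)-E(V)>0$; see \cite{SIMfunint}. Granting it, the (continuous-parameter) ergodic theorem applied to the shift gives, $\rho$-almost surely,
$$
\frac1R\int_0^R g(x)\,dx\ \xrightarrow[R\to\infty]{}\ c_0>0 ,
$$
hence $\int_0^{\infty}g(x)\,dx=+\infty$ for $\rho$-a.e.\ $f$, which is the claim.

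To avoid using ergodicity as a black box, the same conclusion follows from a second-moment estimate. Writing the $P(\phi)_1$-process through its semigroup and decomposing $e^{-x\hat L}=|\Omega_V\rangle\langle\Omega_V|+R_x$ with $\|R_x\|\le e^{-\gamma|x|}$, one gets for the stationary covariance
$$
\big|\E_\rho\big(g(0)g(x)\big)-c_0^2\big|=\big|\big\langle |w|^2\Omega_V,\,R_x\,(|w|^2\Omega_V)\big\rangle_{L^2}\big|\lesssim e^{-\gamma|x|},
$$
using that $|w|^2\Omega_V\in L^2$ (again by $\Omega_V\cong e^{-|w|^3}$, which also gives $\E_\rho(g(0)^2)<\infty$). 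Therefore $\mathrm{Var}_\rho\big(\int_0^R g\big)=\int_0^R\!\!\int_0^R(\E_\rho(g(x)g(y))-c_0^2)\,dx\,dy=O(R)$ while $\E_\rho\big(\int_0^R g\big)=c_0R$, so Chebyshev's inequality gives $\int_0^R g\to\infty$ in $\rho$-probability; taking the subsequence $R_n=n^2$, Borel--Cantelli makes the convergence $\rho$-almost sure along $(R_n)$, and monotonicity of $R\mapsto\int_0^R g$ promotes it to $R\to\infty$.

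The main obstacle is precisely this input on the $P(\phi)_1$-process --- its ergodicity, or equivalently exponential decay of correlations --- which in the present non-Gaussian setting is not a Wick-theorem computation but must be extracted from the spectral gap of $\hat L$ and the positivity of $\Omega_V$. The rest is soft: stationarity is translation invariance of $\rho$, finiteness of the first and second moments of $g(0)$ comes from the super-Gaussian decay $\Omega_V\cong e^{-|x|^3}$, and upgrading ``in probability along $R_n=n^2$'' to ``a.s.\ for all $R$'' is routine.
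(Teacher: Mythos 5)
Your proof is correct, but it follows a genuinely different and heavier route than the paper's. The paper reuses the argument of Proposition~\ref{prop-nonloc} almost verbatim: setting $\|f\|_R^2=\int_R^{R+1}|f|^2$, it observes that $f\in L^2$ forces $\|f\|_R\to 0$, then applies the same Fatou/decreasing-continuity step with translation invariance to get $\rho(\|f\|_{L^2}<\infty)\le\rho(\|f\|_0=0)$, and finally uses $\rho$-a.s.\ continuity of $f$ to reduce $\|f\|_0=0$ to $f(0)=0$, which has probability zero by absolute continuity of the one-point marginal. Thus the paper needs only translation invariance of $\rho$, continuity of the measure from above, a.s.\ continuity of sample paths, and absolute continuity of the law of $f(0)$ --- no spectral input at all. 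You instead invoke ergodicity of the $P(\phi)_1$-process (or, equivalently, the spectral gap of $\hat L$ through the decomposition $e^{-x\hat L}=|\Omega_V\rangle\langle\Omega_V|+R_x$ and the resulting exponential decay of the covariance of $g(x)=|f(x)|^2$). This is a legitimate and standard fact --- $V\cong|x|^4$ is confining so $\hat L$ has discrete spectrum and a gap above its simple ground state --- but it is an additional ingredient the paper never appeals to. In exchange you obtain a strictly stronger, quantitative conclusion: not merely $\rho(f\in L^2)=0$, but $\frac1R\int_0^R|f|^2\to c_0>0$ $\rho$-a.s. (your Chebyshev/Borel--Cantelli variant along $R_n=n^2$ is a correct way to avoid the continuous-time ergodic theorem as a black box). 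So: correct, more powerful, but less economical than the paper's soft argument; the paper deliberately keeps Propositions~\ref{prop-nonloc} and~\ref{prop-nonloc2} at the same low level of structural input.
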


\begin{proof} Let $R\in \Z$ and let 
$$
\|f\|_R^2 = \int_{R}^{R+1} |f(x)|^2dx.
$$ 
If $f$ belongs to $L^2$ then the series of general term $\|f\|_{R}^2$ converges and hence $\|f\|_R$ goes to $0$ when $R$ goes to $\infty$. Therefore
$$
\rho( \|f\|_{L^2} <\infty) \leq \rho (\|f\|_R \rightarrow 0).
$$
For the same reasons as in the proof of Proposition \ref{prop-nonloc}, we have
$$
\rho( \|f\|_{L^2} <\infty) \leq \rho( \|f\|_0 = 0).
$$
Since $f$ is $\rho$ almost surely continuous, we get that $\|f\|_0=0$ almost surely implies $f(0) = 0$ and thus
$$
\rho( \|f\|_{L^2} <\infty) \leq \rho( f(0) = 0) = 0.
$$
\end{proof}

\subsection{Prokhorov's theorem}

In this section we present a classical result of probability theory, known as Prokhorov Theorem, that represents a crucial tool in our convergence argument, and essentially connects the concepts of weak compactness and tightness. We refer to \cite{skoro}, \cite{prok} for all the details and deeper insight on the topic. First of all, we recall the following
\begin{definition} [Weak compactness]

Let $S$ be a metric space. A family $(\mathbf{m}_N)_{N\geq1}$ of probability measures on the Borel $\sigma$-algebra $\mathcal{B}(S)$ is said to be \emph{weakly compact} if from any sequence $\mathbf{m}_N$, $N=1,2,\dots$ of measures from the family one can extract a weakly convergent subsequence $\mathbf{m}_{N_k}$, $k=1,2,\dots$ that is $\mathbf{m}_{N_k}\rightarrow \mathbf{m}$ for some probability measure $\mathbf{m}$.
\end{definition}

\begin{remark}
Note that the definition does not require $\mathbf{m}\in(\mathbf{m}_N)_N$.
\end{remark}

\begin{remark} We recall that weak convergence means that for all $F : S \rightarrow \R$ Lipschitz continuous and bounded we have 
$$
\E_{\mathbf m_{N_k}} (F)\rightarrow \E_{\mathbf m} (F).
$$
The convergence in law is stronger as it means that for all $F : S \rightarrow \R$ bounded we have 
$$
\E_{\mathbf m_{N_k}} (F)\rightarrow \E_{\mathbf m} (F).
$$
\end{remark}

\begin{definition} [Tightness]
Let $S$ be a metric space and $(\mathbf{m}_N)_{N\geq1}$ a family of probability measures on the Borel $\sigma$-algebra $\mathcal{B}(S)$. The family $(\mathbf{m}_N)_N$ is said to be \emph{tight} if for any $\varepsilon>0$ it is possible to find a compact set $K_\varepsilon\subset S$ such that for all $N>1$, $\mathbf{m}_N(K_\varepsilon)\geq 1-\varepsilon$.
\end{definition}

\begin{theorem}[Prokhorov Theorem]\label{prok}
If a family $(\mathbf{m}_N)_{N\geq1}$ of probability measures on a metric space $S$ is tight, then it is weakly compact. Moreover, on a separable complete metric space the two notions are equivalent.
\end{theorem}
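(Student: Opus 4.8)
The result is classical, so the plan is to recall the standard two-step argument, reducing everything to the properties of weak convergence spelled out in the remark above. I would prove the two implications separately, the second one being where completeness of $S$ enters essentially.

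\emph{Step 1: tightness $\Rightarrow$ weak compactness.} Given a tight family $(\mathbf{m}_N)_N$, I would first pick for each $i\geq 1$ a compact set $K_i\subset S$ with $\mathbf{m}_N(K_i)\geq 1-1/i$ for all $N$, and, replacing $K_i$ by $K_1\cup\dots\cup K_i$, assume the $K_i$ increasing. Since each $K_i$ is a compact metric space, $C(K_i)$ is separable; let $D_i\subset C(K_i)$ be a countable dense set, every element of which I regard (by Tietze extension) as a bounded continuous function on $S$. Introducing the sub-probability measures $\mathbf{m}_N^{(i)}(A)=\mathbf{m}_N(A\cap K_i)$, a diagonal extraction over $i$ and over $D_i$ produces a subsequence $(\mathbf{m}_{N_j})_j$ along which $\int g\,d\mathbf{m}_{N_j}^{(i)}$ converges for every $i$ and every $g\in D_i$. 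By density and the uniform bound $\mathbf{m}_N^{(i)}(S)\leq 1$, this yields a positive linear functional on each $C(K_i)$, hence by the Riesz representation theorem a finite Borel measure $\lambda_i$ on $K_i$ with $\int g\,d\lambda_i=\lim_j\int g\,d\mathbf{m}_{N_j}^{(i)}$; testing against the constant function $1$ gives $\lambda_i(K_i)=\lim_j\mathbf{m}_{N_j}(K_i)\geq 1-1/i$. One then checks the $(\lambda_i)$ form a consistent family, so that $\mathbf{m}(A):=\lim_i\lambda_i(A\cap K_i)$ defines a Borel measure with $1\geq\mathbf{m}(S)\geq\sup_i(1-1/i)=1$, i.e. a probability measure. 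Finally, for $f\in C_b(S)$ with $|f|\leq M$, splitting $\int f\,d\mathbf{m}_{N_j}$ over $K_i$ and $S\setminus K_i$ gives $\big|\int f\,d\mathbf{m}_{N_j}-\int f\,d\mathbf{m}_{N_j}^{(i)}\big|\leq M/i$; the truncated integral tends to $\int f\,d\lambda_i$, which tends to $\int f\,d\mathbf{m}$ as $i\to\infty$, and an $\varepsilon/3$ argument gives $\int f\,d\mathbf{m}_{N_j}\to\int f\,d\mathbf{m}$, i.e. $\mathbf{m}_{N_j}\to\mathbf{m}$ weakly.

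\emph{Step 2: weak compactness $\Rightarrow$ tightness when $S$ is separable and complete.} I would argue by contradiction. Fix a countable dense set $(x_i)_i$, so $S=\bigcup_i B(x_i,1/k)$ for each $k$. If for some $k$ and some $\varepsilon>0$ no finite subunion has $\mathbf{m}_N$-measure $>1-\varepsilon$ uniformly in $N$, choose $N_n$ with $\mathbf{m}_{N_n}\big(\bigcup_{i\leq n}B(x_i,1/k)\big)\leq 1-\varepsilon$ and extract $\mathbf{m}_{N_{n_j}}\to\mathbf{m}$ weakly. Since $\bigcup_{i\leq m}B(x_i,1/k)$ is open, the Portmanteau inequality $\mathbf{m}(O)\leq\liminf_j\mathbf{m}_{N_{n_j}}(O)$ (immediate from the definition of weak convergence) gives $\mathbf{m}\big(\bigcup_{i\leq m}B(x_i,1/k)\big)\leq 1-\varepsilon$ for every $m$, hence $\mathbf{m}(S)\leq 1-\varepsilon$, a contradiction. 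Thus for each $k$ there is $n_k$ with $\mathbf{m}_N(U_k)>1-\varepsilon 2^{-k}$ for all $N$, where $U_k=\bigcup_{i\leq n_k}B(x_i,1/k)$. Setting $K=\bigcap_k\overline{U_k}$, the set $K$ is closed and totally bounded, hence compact by completeness of $S$, and $\mathbf{m}_N(S\setminus K)\leq\sum_k\varepsilon 2^{-k}=\varepsilon$, so $\mathbf{m}_N(K)\geq 1-\varepsilon$ for all $N$, proving tightness.

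The main obstacle is Step 1: the extraction of a limiting functional is routine, but verifying that the limiting object is a genuine \emph{probability} measure — that no mass escapes — is exactly where the compacts $K_i$ coming from tightness are used, and the consistency bookkeeping for the family $(\lambda_i)$ together with the passage from weak-$*$ limits on each $K_i$ to weak convergence on $S$ is the delicate part. By contrast, Step 2 is soft once one invokes that in a complete metric space a closed, totally bounded set is compact.
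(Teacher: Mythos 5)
The paper does not prove this statement; its ``proof'' is a citation to \cite{prok}, p.~114, so there is no argument in the text to compare yours against. Your two-step sketch is the standard proof and is correct in its essentials. Two precisions are worth recording. In Step~1, the ``consistency'' you invoke for the family $(\lambda_i)$ is really a \emph{monotonicity} statement: viewing each $\lambda_i$ as a Borel measure on $S$ concentrated on $K_i$, one has $\lambda_i\le\lambda_{i'}$ for $i<i'$, obtained by passing to the limit in $\int_{K_i} g\,d\mathbf{m}_{N_j}\le\int_{K_{i'}} g\,d\mathbf{m}_{N_j}$ for nonnegative continuous $g$; genuine consistency, $\lambda_{i'}|_{K_i}=\lambda_i$, is false in general, since mass may accumulate on $K_{i'}\setminus K_i$ in the limit. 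It is this monotonicity, together with the uniform bound $\lambda_i(S)\le 1$, that makes $\mathbf{m}(A):=\lim_i\lambda_i(A)$ well defined and a (countably additive) probability measure. In Step~2, the Portmanteau inequality $\mathbf{m}(O)\le\liminf_j\mathbf{m}_{N_{n_j}}(O)$ does follow from the paper's notion of weak convergence tested against bounded Lipschitz functions: approximate $\mathbf{1}_O$ from below by the Lipschitz functions $x\mapsto\min\bigl(1,n\,d(x,S\setminus O)\bigr)$ and let $n\to\infty$ after taking $\liminf_j$. With these glosses your argument is complete.
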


\begin{proof}
See e.g. \cite{prok}, pag 114.
\end{proof}

Let us now explain how we will make use of this Theorem. We have already introduced the measure $\nu_N$ defined on the topological $\sigma$-algebra of $\mathcal{X}^\varepsilon_{T,\varphi}$ as the image measure by the map
\begin{eqnarray*}
\mathcal{X}_{\varphi}&\rightarrow& \mathcal{X}_{T,\varphi}
\\
v&\rightarrow& (t\mapsto \psi_N(t)(v));
\end{eqnarray*}
notice that in particular, for any measurable function $F:\mathcal{X}_{T,\varphi}\rightarrow\mathbb{R}$,
\begin{equation}\label{invarmeasf}
\int_{\mathcal{X}_{T,\varphi}}F(u)d\nu_N=\int_{\mathcal{X}_\varphi}F(\psi_N(t)(v))d\rho_N.
\end{equation}
The idea now is to show that the sequence of measures $\{\nu_N\}_N$ is tight in the space $\mathcal{X}^\varepsilon_{T,\varphi}$ for any $\varepsilon>0$ (this will be done in details in subsection \ref{subsec-tightness}). Therefore, the application of Theorem \ref{prok} yields the weak convergence (up to a subsequence) of $\{\nu_N\}_N$ towards a measure $\nu$ on $\mathcal{X}^\varepsilon_{T,\varphi}$.

\subsection{Skorokhod's theorem}\label{subsec-sko}

In this subsection, we give and comment Skorokhod's theorem and explain how we use it to get the existence of a weak solution to the cubic defocusing Schr\"odinger equation \eqref{NLS2}.


\begin{theorem}[Skorokhod] 
Let $S$ be a metric space and let $(\textbf{m}_N)_N$ be a sequence of measures on $S$ converging weakly towards a measure $\textbf{m}$ on $S$. We assume that the supports of $\textbf{m}_N$ and $\textbf{m}$ are separable. Then, there exists a probability space and a sequence of random variables $(X_N)_N$ and a random variable $X$ on this probability space such that \begin{itemize}
\item the law of $X_N$ is $\textbf{m}_N$,
\item the law of $X$ is $\textbf{m}$,
\item the sequence $(X_N)_N$ converges almost surely towards $X$.
\end{itemize}
\end{theorem}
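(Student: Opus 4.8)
The plan is to prove this classical representation theorem in the usual way, reducing it level by level to a finite matching problem via a nested--partition coding of the unit interval. The only input from weak convergence I will use is the portmanteau inequality: if $B$ is an $\mathbf m$-\emph{continuity set} ($\mathbf m(\partial B)=0$), then $\mathbf m_N(B)\to\mathbf m(B)$. Replacing $S$ by the closure of the union of the supports of $\mathbf m$ and of all $\mathbf m_N$, I may assume $S$ separable; in our application $S=\mXe_{T,\varphi}$ is moreover a separable Banach space, hence complete, a fact I will use to locate limit points. As a motivating special case, when $S=\R$ one is done by the quantile transform: on $\big((0,1),\mathrm{Leb}\big)$ with coordinate $U$ set $X_N=F_N^{-1}(U)$, $X=F^{-1}(U)$, where $G^{-1}(u)=\inf\{x:G(x)\ge u\}$; then $X_N\sim\mathbf m_N$, $X\sim\mathbf m$, and $F_N\to F$ at continuity points of $F$ forces $F_N^{-1}\to F^{-1}$ at the cocountably many continuity points of the monotone function $F^{-1}$, so $X_N\to X$ almost surely.

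For general $S$ I would first fix, for each $m\ge1$, a countable Borel partition $\mathcal P_m=\{B_m^1,B_m^2,\dots\}$ of $S$ into $\mathbf m$-continuity sets of diameter $\le 2^{-m}$, with $\mathcal P_{m+1}$ refining $\mathcal P_m$: cover $S$ by balls of radius $<2^{-m-1}$, choosing each radius so as to avoid the at most countably many spheres of positive $\mathbf m$-boundary about its centre, disjointify, and take common refinements (finite intersections of continuity sets are continuity sets). Then, on $\big((0,1),\mathrm{Leb}\big)$, I code any measure $Q$ on $S$ by nested half-open intervals: using a fixed enumeration of the sub-cells of each cell, the $k$-th level-$1$ cell receives an interval of length $Q(B_1^k)$ placed consecutively in $(0,1)$, and the sub-cells of a cell already assigned an interval $I$ receive consecutive subintervals of $I$ of the corresponding $Q$-lengths. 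Running this for $Q=\mathbf m$ and for $Q=\mathbf m_N$ produces, for each $\omega$, nested cells $C^{(1)}(\omega)\supseteq C^{(2)}(\omega)\supseteq\cdots$ and $C^{(1)}_N(\omega)\supseteq C^{(2)}_N(\omega)\supseteq\cdots$; since the $\overline{C^{(m)}(\omega)}$ are nonempty nested closed sets of diameter $\to0$ in the complete space $S$, I set
$$
X(\omega)\in\bigcap_m\overline{C^{(m)}(\omega)},\qquad X_N(\omega)\in\bigcap_m\overline{C^{(m)}_N(\omega)},
$$
each intersection being a single point. That $X\sim\mathbf m$ and $X_N\sim\mathbf m_N$ then follows, on the generating algebra $\bigcup_m\sigma(\mathcal P_m)$, from $\mathbf m(\overline{B_m^k})=\mathbf m(B_m^k)$ and the $\mathbf m$-essential disjointness of the closures $\overline{B_m^k}$ (both consequences of $\mathbf m(\partial B_m^k)=0$).

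For the almost sure convergence I would argue pointwise in $\omega$. Every endpoint of the interval coding a level-$m$ cell under a measure $Q$ is a \emph{finite} sum of $Q$-measures of cells of levels $\le m$, hence, by the portmanteau input applied to these finitely many continuity sets, it converges as $Q$ runs through $\mathbf m_N\to\mathbf m$. Take any $\omega\in(0,1)$ that is not an $\mathbf m$-coding endpoint at any level — a full-measure set — and fix $m$; writing $B_m^{k^\ast}=C^{(m)}(\omega)$, the point $\omega$ lies \emph{strictly} inside the $\mathbf m$-coding interval of $B_m^{k^\ast}$, so, the two endpoints of the $\mathbf m_N$-coding interval of the \emph{same} cell converging to those of the $\mathbf m$-one, for all large $N$ we get $C^{(m)}_N(\omega)=B_m^{k^\ast}=C^{(m)}(\omega)$ and hence $d\big(X_N(\omega),X(\omega)\big)\le\mathrm{diam}\,B_m^{k^\ast}\le 2^{-m}$. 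Since $m$ is arbitrary, $X_N(\omega)\to X(\omega)$, which is the claim.

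I expect the main obstacle to be exactly this last step: getting \emph{almost sure} — not merely in-probability — convergence for the full sequence. A crude count only gives $\mathbf P\big(C^{(m)}_N\ne C^{(m)}\big)\to0$ as $N\to\infty$ with no uniform rate, which seems to require a Borel--Cantelli or threshold argument that may fail; the aligned cumulative coding above is designed precisely to replace this by a pointwise statement, since for a fixed generic $\omega$ and fixed level $m$ only finitely many coding endpoints are relevant and each of them converges. The secondary point needing care is the existence and uniqueness of the limit point $\bigcap_m\overline{C^{(m)}(\omega)}$, where completeness of $S$ enters; in the stated generality (metric $S$ with separable supports) one works inside the completion of the separable support, while in our setting completeness of $\mXe_{T,\varphi}$ makes it automatic.
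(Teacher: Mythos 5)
The paper does not prove Skorokhod's theorem: it states it and refers to the reference \cite{skoro} for the proof and applications, so there is no paper argument to compare yours against. Your proposal is a serviceable sketch of one standard route — nested refining partitions into $\mathbf m$-continuity cells, cumulative interval coding of each measure on $(0,1)$, and a pointwise endpoint-convergence argument for the a.s.\ statement — and the overall structure is sound.

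Two remarks where your write-up could be tightened. First, your justification of the marginal laws leans on the wrong mechanism: you invoke $\mathbf m(\overline{B_m^k})=\mathbf m(B_m^k)$ and $\mathbf m$-essential disjointness of the closures, but for $X_N$ the relevant quantities would be $\mathbf m_N(\partial B_m^k)$, which need not vanish since the cells are only chosen as $\mathbf m$-continuity sets. Fortunately no continuity-set hypothesis is needed at all for the marginals: if one defines, for any Borel probability $Q$ on $S$ and any $x\in S$, the \emph{address} of $x$ as the sequence of cells containing $x$, then the law of the address sequence $(C^{(m)}(\omega))_m$ produced by the $Q$-coding is exactly the push-forward of $Q$ under the address map (by construction of the interval lengths), and the limit-point map applied to the address of $x$ returns $x$ because the diameters shrink to $0$. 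Composing these two push-forwards yields $\mathcal L(X_Q)=Q$ directly, so $X\sim\mathbf m$ and $X_N\sim\mathbf m_N$ hold for free and the continuity-set property is needed only — as you correctly use it — for the endpoint convergence in the a.s.\ step. Second, the same observation disposes of your worry about completeness: since the address distribution equals the push-forward of $Q$ and every point of $S$ is the unique element of the intersection of the closures of the cells of its own address, the set of $\omega$ for which $\bigcap_m\overline{C^{(m)}(\omega)}$ meets $S$ (and hence equals a singleton in $S$) has full Lebesgue measure, with no need to pass to the completion or to invoke the completeness of $\mXe_{T,\varphi}$.

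Finally, your a.s.\ convergence step is correct and is indeed the crux: fixing a generic $\omega$ and a level $m$, the left endpoint of the coding interval of $C^{(m)}(\omega)$ and the measure of that cell are both finite linear combinations of $\mathbf m_N$-masses of $\mathbf m$-continuity sets, so they converge by the portmanteau theorem; since $\omega$ lies strictly inside the $\mathbf m$-interval, it lies in the $\mathbf m_N$-interval for all large $N$, giving $C_N^{(m)}(\omega)=C^{(m)}(\omega)$ and hence $d(X_N(\omega),X(\omega))\le 2^{-m}$. This gives a.s.\ convergence for the full sequence without any Borel--Cantelli argument, which is exactly the advantage of the aligned cumulative coding.
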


We refer to \cite{skoro} for the proof and some applications.

Let us now give some remarks. First, we have that the space $\mXe_{\varphi,T}$, given by \eqref{spaceXY} is separable. Assuming that we have proven that the sequence of measures $\nu_N$ converges weakly, which we deduce in Subsection \ref{subsec-tightness} from Prokhorov's theorem, we get the existence of a sequence of random variables $X_N$ of law $\nu_N$ which converges towards $X$ of law $\nu$ the limit of $(\nu_N)_N$ up to a subsequence.

We now explain why $X_N$ can be written $X_N(t,x) = \psi_N(t)(Y_N)(x)$ such that $Y_N(x) = X_N(0,x)$ and its law is $\rho_N$.

\begin{proposition}\label{prop-linkrvmeas} Assume that $X_N$ is a random variable on a probability space $(\Omega, \mathcal F, P)$ with value in $\mXe_{\varphi,T}$ of law $\nu_N$. Let $Y_N = X_N(t=0)$. Then, $P$-almost surely we have $X_N(t) = \psi_N(t) Y_N$ and the law of $Y_N$ is $\rho_N$.
\end{proposition}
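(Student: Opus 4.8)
The plan is to show that $\nu_N$ is carried by the graph of the approximating flow and then to read off the time-zero marginal. Write $\Psi_N\colon\mX_\varphi\to\mX_{T,\varphi}$ for the map $v\mapsto(t\mapsto\psi_N(t)v)$; by the global well-posedness and continuous dependence for \eqref{NLSN} recalled and adapted to the present functional setting in Subsection \ref{invdiff} (following \cite{lastbaby}), $\Psi_N$ is well defined and continuous, hence Borel, and by \eqref{defnuN} we have $\nu_N=(\Psi_N)_\ast\rho_N$. Since every $v\in\mX_\varphi$ is mapped by $\Psi_N$ into $\mX_{T,\varphi}$, which sits inside $\mXe_{T,\varphi}$ through the continuous (hence Borel) inclusion coming from \eqref{spaceXY}, we may regard $\nu_N$ as a Borel probability measure on $\mXe_{T,\varphi}$ carried by (the image of) $\mX_{T,\varphi}$, which is the space in which $X_N$ takes its values.

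First I would prove that
$$
A_N:=\big\{u\in\mXe_{T,\varphi}\ \big|\ u(0)\in\mX_\varphi\ \text{ and }\ u(t)=\psi_N(t)\big(u(0)\big)\ \text{ for all }t\in[-T,T]\big\}
$$
is a Borel subset of $\mXe_{T,\varphi}$. For each fixed $t$ the evaluation $e_t\colon u\mapsto u(t)$ is continuous on $\mXe_{T,\varphi}$, the set $\mX_\varphi$ is Borel in $\mXe_\varphi$ (it is the finiteness set of the lower semicontinuous function $v\mapsto\|v\|_{\mX_\varphi}$, whose balls are moreover compact in $\mXe_\varphi$), and $\psi_N(t)$ is Borel on $\mX_\varphi$; hence on $\{u:e_0(u)\in\mX_\varphi\}$ the map $u\mapsto\big(e_t(u),\psi_N(t)(e_0(u))\big)$ is Borel into $\mXe_\varphi\times\mXe_\varphi$, so $A_N^t:=\{u:e_0(u)\in\mX_\varphi,\ e_t(u)=\psi_N(t)(e_0(u))\}$, being the preimage of the closed diagonal, is Borel. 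Using continuity in $t$ of the elements of $\mXe_{T,\varphi}$ and of $t\mapsto\psi_N(t)v$ one gets $A_N=\bigcap_{t\in\mathbb Q\cap[-T,T]}A_N^t$, so $A_N$ is Borel.

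Next I would check $\nu_N(A_N)=1$ and conclude the first assertion. For any $v\in\mX_\varphi$ the flow property gives $\Psi_N(v)(0)=\psi_N(0)v=v\in\mX_\varphi$ and $\Psi_N(v)(t)=\psi_N(t)v=\psi_N(t)\big(\Psi_N(v)(0)\big)$, so $\Psi_N(v)\in A_N$; hence $\Psi_N^{-1}(A_N)=\mX_\varphi$ and $\nu_N(A_N)=(\Psi_N)_\ast\rho_N(A_N)=\rho_N(\mX_\varphi)=1$. Since the law of $X_N$ is $\nu_N$, this yields $P(X_N\in A_N)=1$, i.e.\ $P$-almost surely $X_N(t)=\psi_N(t)\big(X_N(0)\big)=\psi_N(t)Y_N$ for every $t\in[-T,T]$. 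For the law of $Y_N$, note $Y_N=e_0(X_N)$, so the law of $Y_N$ is the pushforward $(e_0)_\ast\nu_N=(e_0)_\ast(\Psi_N)_\ast\rho_N=(e_0\circ\Psi_N)_\ast\rho_N$; but $e_0\circ\Psi_N(v)=\Psi_N(v)(0)=\psi_N(0)v=v$, so $e_0\circ\Psi_N$ is the identity of $\mX_\varphi$ (composed with the inclusion $\mX_\varphi\hookrightarrow\mXe_\varphi$) and, since $\rho_N(\mX_\varphi)=1$, the law of $Y_N$ is $\rho_N$.

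The argument is a soft measure-theoretic one once the ingredients are in place, so I expect the only real work to be bookkeeping: checking that $\psi_N$ is globally defined and Borel (indeed continuous) on $\mX_\varphi$ in the present topology, that $\rho_N$ is a genuine Borel probability measure carried by $\mX_\varphi$, and that passing from the topological $\sigma$-algebra of $\mX_{T,\varphi}$ to that of $\mXe_{T,\varphi}$ through the continuous inclusion loses nothing. All of this is precisely what the adaptation of \cite{lastbaby} in Subsection \ref{invdiff}, together with the compact-embedding statement for the balls of $\mX_\varphi$, is meant to supply.
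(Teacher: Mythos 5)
Your proposal is correct and follows essentially the same route as the paper: both express $\nu_N$ as the pushforward of $\rho_N$ under the map $v\mapsto(t\mapsto\psi_N(t)v)$, define a full-measure set on which $u(t)=\psi_N(t)u(0)$, and read off the law of $Y_N$ from the fact that evaluation at $t=0$ composed with this map is the identity. The only difference is cosmetic: you supply the explicit Borel-measurability check for the set $A_N$, which the paper leaves implicit.
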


\begin{proof} Let $A$ be the set
$$
A=\{\omega \in \Omega \, |\, \forall t \in \R ,\, X_N(t)(\omega) = \psi_N(t) X_N(t=0)(\omega)\} .
$$
We can rewrite $A$ as
$$
A  =  \{\omega \in \Omega \, |\, \exists u_0\in \mXe_\varphi\, \forall t \in \R ,\, X_N(t)(\omega) = \psi_N(t) u_0\}.
$$
Indeed, if $\omega \in A$ then there exists $u_0 = X_N(t=0)(\omega)$ in $\mXe_\varphi$ such that $X_N(t)(\omega) = \psi_N(t) u_0$. Conversely, if there exists $u_0 \in \mXe_\varphi$ such that $X_N(t)(\omega) = \psi_N(t) u_0$ then $X_N(0)(\omega) = \psi_N(0)u_0 = u_0$.

Hence, as the law of $X_N$ is $\nu_N$,
$$
P(A) = \nu_N \Big( \{ u \,|\, \exists u_0 \in \mXe_\varphi \, u(t) = \psi_N(t) u_0\}\Big) = \nu_N \Big(\psi_N(t) (\mXe_\varphi)\Big).
$$
And we recall from the definition of $\nu_N$ that
$$
\nu_N(B ) = \rho_N ( u_0 \,|\, \psi_N(t) u_0 \in B).
$$
Therefore 
$$
P(A) = \rho_N ( u_0 \, |\, \psi_N(t) u_0 \in \psi_N(t) (\mXe_\varphi)) = \rho_N(\mXe_\varphi) = 1.
$$
In other terms, $P$ almost surely $X_N(t)(\omega) = \psi_N(t) X_N(t=0)(\omega)$.

Let us prove that the law of $Y_N = X_N(t=0)$ is $\rho_N$. Let $A$ be a measurable set of $\mXe_\varphi$. We have 
$$
P(Y_N \in A ) = P( X_N(t=0) \in A).
$$
Since the law of $X_N$ is $\nu_N$, we get
$$
P(Y_N \in A ) =\nu_N ( u | u(t=0) \in A).
$$
And given the definition of $\nu_N$ we have 
$$
P(Y_N \in A ) = \rho_N ( \{u_0 \, |\, \psi_N(t)u_0 \in \{u | u(t=0) \in A\}\} = \rho_N(A).
$$
Hence the law of $Y_N$ is $\rho_N$.\end{proof}

\begin{proposition}\label{prop-invarrv} Under the assumptions of Proposition \ref{prop-linkrvmeas}, for all $t\in \R$ the law of $X_N(t)$ is $\rho_N$.\end{proposition}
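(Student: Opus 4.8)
The plan is to deduce invariance of $\rho_N$ under $X_N(t)$ from two facts already established: Proposition \ref{prop-linkrvmeas}, which tells us that $P$-almost surely $X_N(t)(\omega) = \psi_N(t) Y_N(\omega)$ with $Y_N = X_N(t=0)$ having law $\rho_N$, and the invariance of $\rho_N$ under the flow $\psi_N$ proved in \cite{lastbaby} (equation \eqref{NLSN} is exactly the equation for which $\rho_N$ was constructed as an invariant measure). So the first step is to fix $t \in \R$ and a measurable set $A \subset \mXe_\varphi$ and write $P(X_N(t) \in A)$; since the two random variables $X_N(t)$ and $\psi_N(t) Y_N$ agree $P$-almost surely, we have $P(X_N(t) \in A) = P(\psi_N(t) Y_N \in A)$.

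Next I would rewrite the right-hand side as $P(Y_N \in \psi_N(t)^{-1}(A))$, using that $\psi_N(t)$ is a well-defined (bijective, by reversibility of the Hamiltonian flow) map on $\mXe_\varphi$ — or, if one prefers to avoid inverting, simply as $P(Y_N \in \{u_0 \mid \psi_N(t) u_0 \in A\})$, which does not require bijectivity. Since the law of $Y_N$ is $\rho_N$, this equals $\rho_N(\{u_0 \mid \psi_N(t) u_0 \in A\}) = \rho_N(\psi_N(t)^{-1}(A))$. Finally, the invariance of $\rho_N$ under $\psi_N(t)$ — i.e. $\rho_N \circ \psi_N(t)^{-1} = \rho_N$ — gives $\rho_N(\psi_N(t)^{-1}(A)) = \rho_N(A)$. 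Stringing these equalities together yields $P(X_N(t) \in A) = \rho_N(A)$ for every measurable $A$, which is precisely the statement that the law of $X_N(t)$ is $\rho_N$.

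There is essentially no obstacle here: the proposition is a short bookkeeping consequence of the preceding work. The only point requiring a modicum of care is making sure the "almost sure" identification from Proposition \ref{prop-linkrvmeas} is used correctly — that is, the event $\{X_N(t) \in A\}$ and the event $\{\psi_N(t) Y_N \in A\}$ differ by a $P$-null set, so their probabilities coincide — and that the map $u_0 \mapsto \psi_N(t) u_0$ is measurable on $\mXe_\varphi$ so that the preimage $\{u_0 \mid \psi_N(t) u_0 \in A\}$ is $\rho_N$-measurable; both are already implicit in the construction of $\nu_N$ and $\rho_N$ and in \cite{lastbaby}. One could also phrase the whole argument more slickly by noting that $X_N(t) = \psi_N(t) \circ Y_N$ almost surely and that pushing forward the law $\rho_N$ of $Y_N$ by $\psi_N(t)$ gives $\rho_N$ again by invariance, but the step-by-step version above mirrors the style of Proposition \ref{prop-linkrvmeas} and is probably what the authors intend.
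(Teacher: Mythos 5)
Your proposal is correct and is essentially the same argument as the paper's, just written out in more detail: the paper simply states that since $X_N(t)=\psi_N(t)Y_N$ with $Y_N$ of law $\rho_N$, the law of $X_N(t)$ is the pushforward of $\rho_N$ under $\psi_N(t)$, which equals $\rho_N$ by invariance. Your step-by-step unfolding of the pushforward identity and the care you take with the almost-sure identification are exactly the bookkeeping the paper leaves implicit.
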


\begin{proof} As $X_N(t) = \psi_N(t) Y_N$, we have that the law of $X_N(t)$ is the image measure of $\rho_N$ under $\psi_N(t)$ but since $\rho_N$ is invariant under $\psi_N$, we get that the law of $X_N(t)$ is $\rho_N$.
\end{proof}

The idea is now that as $X_N (t) = \Psi_N(t) Y_N$, the random variable $X$ is a weak solution of the cubic non linear Schr\"odinger equation \eqref{NLS2}, on the support of the limit measure $\rho$, see Subsection \ref{subsec-weaksol}.

\subsection{Invariance of \texorpdfstring{$\rho_N$}{rhoN}}\label{invdiff}

In this subsection, we recall the result of \cite{lastbaby}, and explain the density argument which makes $\rho_N$ invariant under $\psi_N$ in $\mathcal Z_\varphi$.

In \cite{lastbaby} , we proved that the measures $\rho_N$ were invariant under the flow $\psi_N$ for some topology $\mathcal Y_s$ induced by the norm
$$
p_s (f) = \|\an{t}^{-2}\an{x}^{-2}D^s S(t) f\|_{L^2(t\in \R,x\in \R)} 
$$
for $s < -1/2$. Indeed, as $\chi_N$ is $\mathcal C^\infty$ with compact support, it satisfies the hypothesis of Subsection 1.1 in \cite{lastbaby} . This means that for all measurable bounded function $F$ of $\mathcal Y_s$ and all times $t\in \R$, we have 
$$
\E_{\rho_N}(F\circ\psi_N(t)) = \E_{\rho_N}(F).
$$
We recall that $S(t) = e^{-it\lap}$.

We wish to prove that this property is also true in $\mathcal Z_\varphi$. Namely, that for all measurable bounded function $F$ of $\mathcal Z_\varphi$,
$$
\E_{\rho_N}(F\circ\psi_N(t)) = \E_{\rho_N}(F).
$$

For this, we need the following lemmas.

\begin{lemma}\label{lem-welldef} For all non negative and increasing function $\varphi$, and for $\rho_N$ almost all $u$, we have $\psi_N(t) u \in \mathcal Z_\varphi$. \end{lemma}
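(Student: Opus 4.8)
The plan is to show that $\rho_N$-almost every $u$ gives $\psi_N(t)u\in\mathcal Z_\varphi$ for every $t$, by combining the known invariance in $\mathcal Y_s$ (which tells us $\psi_N(t)u$ exists as a function and stays in a reasonable class for $\rho_N$-a.e.\ $u$) with the explicit description of $\rho_N$ as $d\rho_N(u)=\frac1{D'_N}e^{-\int\chi_N|u|^4}dq(u)$ from Proposition \ref{ovvio}. Since $\chi_N$ is bounded and $e^{-\int\chi_N|u|^4}\le 1$, the density is bounded, so it suffices to prove the statement for $q$-almost every $u$, and then the flow $\psi_N$ maps $q$-typical data to functions in $\mathcal Z_\varphi$ for all time; the boundedness of the Radon--Nikodym derivative transfers the full-measure conclusion back to $\rho_N$. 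In other words, I would reduce everything to a statement about the oscillator process $q$.

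First I would record what $\psi_N(t)u$ looks like: writing $v(t)=\psi_N(t)u$, we have $v(t)=S(t)u - i\int_0^t S(t-s)\big(\chi_N|v(s)|^2v(s)\big)\,ds$ (with $S(t)=e^{-it\lap}$), and from \cite{lastbaby} this solution is globally well defined for $\rho_N$-a.e.\ $u$. The Duhamel term is $x$-localised (because of $\chi_N$) and smoothing, so it is harmless: the point is to control $S(t)u$ in the $\mathcal Z_\varphi$ norm, i.e.\ to control $\|\an{x}^{-2}(1+\varphi)^{-1}D_x^{\sigma+2}S(t)u\|_{L^2}$ and $\|\an{x}^{-2}(1+\varphi)^{-1/3}S(t)u\|_{L^6}$. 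Here the key structural fact is that the initial datum is, up to a bounded-density change of measure, distributed as the oscillator process, so $u=(1-\lap)^{-1/2}g$ for a white-noise-type object $g$ (cf. Proposition \ref{qvar} and the surrounding remarks); this gives the precise regularity $u\in H^{-1/2-}_{loc}$ and, with the weights $\an{x}^{-2}$, enough decay in $x$ to put $u$ and its images under $S(t)$ into weighted $L^2$. Since $\sigma+2 < 1/4$, we lose strictly less than $1/2$ derivative relative to $L^2$, which is exactly the margin available for an oscillator process against $(1-\lap)^{1/2}$, so $D_x^{\sigma+2}u$ sits in weighted $L^2$; the unitary $S(t)$ commutes with $D_x$ and essentially preserves this after absorbing the free evolution into the weight $\an{x}^{-2}$ (and, if needed, into the artefact weight $\varphi$, which is precisely why $\varphi$ was introduced). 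The $L^6$ piece is handled similarly via a Sobolev embedding $H^{1/6+}\hookrightarrow L^6$ on the line, again with room to spare.

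Concretely the steps are: (i) invoke Proposition \ref{ovvio} to get $\rho_N\ll q$ with bounded density, reducing to a $q$-a.e.\ statement; (ii) use the global well-posedness of $\psi_N$ on $\rho_N$-typical (hence $q$-typical) data from \cite{lastbaby}, writing $\psi_N(t)u$ via Duhamel; (iii) bound the homogeneous part $S(t)u$ in $\mathcal Z_\varphi$ using the regularity/decay of the oscillator process and the fact that $\sigma+2<1/4$ and $6$ in $L^6$ are within the usable range, choosing $\varphi$ large enough (e.g.\ related to a power of $\an{t}$) to absorb the growth of the free evolution in the weighted norms; (iv) bound the Duhamel correction, which is compactly supported in $x$ and smoother, by the deterministic $L^2$-based local theory already in \cite{lastbaby}. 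The main obstacle I anticipate is step (iii): controlling $S(t)u$ in a spatially weighted norm when $u$ is a non-$L^2$ object like an oscillator process, since the free Schr\"odinger flow does not respect $x$-localisation and can spread mass — this is exactly where one must lean on the weight $\varphi$ and on the quantitative decay estimates for the process, and getting the book-keeping of weights versus derivative loss to close is the delicate part. A secondary (but routine) point is making sure the ``for all $t$'' quantifier is handled on a single full-measure set, which follows from continuity in $t$ of $\psi_N(t)u$ in a slightly rougher topology together with separability.
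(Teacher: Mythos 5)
Your Duhamel split $\psi_N(t)u=S(t)u+\Psi_N(t)u$ is exactly the decomposition the paper uses, and the intuition that the nonlinear correction is smoother and can be placed in a subcritical Sobolev space is also what the paper does (it shows $\Psi_N(t)u\in H^{s_\infty}$ with $s_\infty<1/2$ and then uses $H^{s_\infty}\hookrightarrow L^6$ plus the $\an{x}^{-2}$ weight to land in $\mathcal Z_\varphi$). So the skeleton is right. But two things keep this from being a proof.

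First and most seriously, you leave the crucial step undone. The paper disposes of the free part $S(t)u$ with a single citation to Proposition~2.5 of \cite{lastbaby}, which is precisely the statement that, for the measure at hand, $S(t)u$ lies in the relevant $\an{x}^{-2}$-weighted space for a.e.\ $u$. You instead try to rederive this from the oscillator-process structure of $q$, and you explicitly flag it as ``the delicate part'' where ``getting the book-keeping of weights versus derivative loss to close'' is unclear. A blind proof that identifies but does not close the hardest step is a gap, not a proof; the authors close it by reusing their own earlier estimate. (Your step (i), reducing $\rho_N$-a.e.\ to $q$-a.e.\ via Proposition \ref{ovvio} and the bounded Radon--Nikodym density, is correct and harmless, but the paper skips it because the cited propositions from \cite{lastbaby} are already stated for $\rho_N$.) Similarly, saying the Duhamel term is ``$x$-localised (because of $\chi_N$)'' is not right as stated: $\chi_N$ localises the source, but $S(t-s)$ spreads it, so the Duhamel integral is not compactly supported in $x$; what actually makes it harmless is the gain of regularity to $H^{s_\infty}$ together with the $\an{x}^{-2}$ weight in $\mathcal Z_\varphi$, which is how the paper argues.

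Second, you misread the quantifier on $\varphi$. The lemma asserts the conclusion for \emph{every} non-negative increasing $\varphi$, so you are not free to ``choose $\varphi$ large enough (e.g.\ related to a power of $\an t$) to absorb the growth of the free evolution.'' Fortunately the error is in a harmless direction: since $(1+\varphi)^{-1}\le 1$, the $\varphi$-weight only shrinks the $\mathcal Z_\varphi$ norm, so the worst case is $\varphi\equiv 0$ and proving that case suffices for all $\varphi$. But the mechanism you invoke (enlarging $\varphi$ as a function of $t$) is not available, and relying on it to control $S(t)u$ would make the argument circular; the actual control must come from the $\an{x}^{-2}$ weight and the regularity of the process, which is what Proposition~2.5 of \cite{lastbaby} provides.
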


\begin{proof} This is a consequence of Proposition 4.4, bullet 3 in \cite{lastbaby}. Indeed, with a control of $\psi_N(t_n) u$ at discrete well-chosen times, one can apply the contraction argument for the well-posedness and deduce that 
$$
\Psi_N(t) u = \psi_N(t)u - S(t)  u
$$
belongs to $H^{s_\infty}$ with $s_\infty$ given in Subsection 1.1 of \cite{lastbaby} for all $t\in [t_n,t_{n+1}]$ and ultimately at all time. Given that $s_\infty$ can be chosen as close as but strictly less than $1/2$ and that $H^{s_\infty}$ is embedded in $L^6$ if $s_\infty \geq 1/3$ and hence that $H^{s_\infty}$ is embedded in $\mathcal Z_\varphi$ if $s_\infty \geq \max( 1/3,2+\sigma)$, we get that $\Psi_N(t) u$ belongs $\rho_N$ almost surely to $\mathcal Z_\varphi$. 

The fact that $S(t) u$ belongs $\rho_N$ almost surely to $\mathcal Z_\varphi$ is a consequence of Proposition 2.5 in \cite{lastbaby}.
\end{proof}

\begin{lemma}\label{lem-meas} Let $w$ be a $\mathcal C^\infty$ function of $\R$ with compact support. There exists a constant depending on $w$, $C(w)$, such that for all $u\in \mathcal Y_s$, 
$$
\|w* u\|_{\mathcal Z_\varphi}\leq p_{s}(u) .
$$
\end{lemma}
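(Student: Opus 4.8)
The plan is to bound the $\mathcal Z_\varphi$ norm of $w*u$ by interpolating between the two pieces of the $\mathcal Z_\varphi$ norm and expressing each in terms of the quantity controlled by $p_s$. First I would recall that $p_s(u) = \|\an{t}^{-2}\an{x}^{-2}D^s S(t)u\|_{L^2_{t,x}}$, so evaluating at $t=0$ cannot be done directly because of the time-averaging; instead the point of convolving with a fixed smooth compactly supported $w$ is that $w*u$ enjoys extra smoothness and spatial decay, and the Schr\"odinger propagator $S(t)$ applied to $w*u$ differs from $w*u$ itself only by a controllable amount for $t$ in the (compact) support region weighted by $\an{t}^{-2}$. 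So the first step is to write, schematically, $w*u = \int_{\R} \eta(t) S(t)(w*u)\,dt / \int \eta$ for a suitable smooth cutoff $\eta$ in time, and absorb the operator $S(t)^{-1}$ acting on the smooth, fast-decaying kernel $w$ into a harmless smoothing-and-decaying operator, using that $\an{x}^{-2}$ and $D^{\sigma+2}$ applied after convolution with $w$ gain everything needed.

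The core estimate then splits into two. For the $L^2$ piece $\|\an{x}^{-2}(1+\varphi)^{-1}D_x^{\sigma+2}(w*u)\|_{L^2}$: since $\sigma+2 < 1/4 < s + \text{(gain from }w)$ — more precisely $D_x^{\sigma+2}(w*\cdot) = (D_x^{\sigma+2-s}w)* (D_x^s \cdot)$ and $D_x^{\sigma+2-s}w$ is again Schwartz — I would move two derivatives and the extra decay onto the kernel and reduce to controlling $\|\an{x}^{-2}D_x^s S(t)u\|_{L^2_x}$ averaged against $\an{t}^{-2}$, which is exactly $p_s(u)$ up to the normalizing constant $C(w)$ and up to commuting $\an{x}^{-2}$ past the convolution (a standard weighted-convolution manipulation, using that $w$ has compact support so $\an{x-y}^{-2}\an{y}^2$ is bounded on the support of $w(\cdot-y)$... wait, rather $\an{x}^2\an{x-y}^{-N}$ is bounded for $y$ in the support, so moving $\an{x}^{-2}$ inside costs a constant). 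For the $L^6$ piece $\|\an{x}^{-2}(1+\varphi)^{-1/3}(w*u)\|_{L^6}$: here I would use a Bernstein/Sobolev-type gain from the convolution, writing $w*u = (\langle D\rangle^{-(\sigma+2)}w)*(\langle D\rangle^{\sigma+2}u)$ and then Sobolev embedding $\dot H^{\sigma+2-s} \hookrightarrow L^6$ when $\sigma + 2 - s \geq 1/3$ together with the decay weight, again reducing to the $p_s$ quantity.

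The main obstacle I expect is the interplay between the time-averaging in $p_s$ and the time-evaluation needed for $\mathcal Z_\varphi$ (which is a space of functions of $x$ only): one must genuinely exploit that $w*u$ is, after convolution, regular enough that $t\mapsto S(t)(w*u)$ is, say, $C^1$ in $t$ with values in $\mathcal Z_\varphi$ with a derivative controlled by a slightly higher weighted Sobolev norm of the same $p_s$-type, so that evaluating at $t=0$ versus integrating $\eta(t)\,dt$ differ by terms still bounded by $p_s(u)$. Concretely, one estimates $\|S(t)(w*u) - S(t')(w*u)\|_{\mathcal Z_\varphi} \lesssim |t-t'|\, \sup_\tau \|\partial_\tau S(\tau)(w*u)\|_{\mathcal Z_\varphi}$ and $\partial_\tau S(\tau)(w*u) = i\lap S(\tau)(w*u) = S(\tau)(\lap w * u)$, with $\lap w$ still Schwartz, so this is of the same form with $w$ replaced by $\lap w$. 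A second, more bookkeeping-level difficulty is keeping the $\varphi$-weights harmless: since $\varphi$ is non-negative and increasing, $(1+\varphi)^{-1}\le 1$ and $(1+\varphi)^{-1/3}\le 1$, so these can simply be dropped when passing to $p_s$ (which carries no $\varphi$), which is why the final bound has no $\varphi$-dependence and no extra constant beyond $C(w)$ — indeed, choosing $w$ and absorbing all constants, one can arrange the clean inequality $\|w*u\|_{\mathcal Z_\varphi}\le p_s(u)$ as stated, possibly after rescaling $w$.
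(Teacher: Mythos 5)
Your high-level idea---exploit the $t$-independence of $w*u$ by inserting $S(t)$, and use the smoothness and rapid decay of $w$ to absorb the weight $\an{x}^{-2}$ and the derivative gap from $s$ up to $\sigma+2$---is the same as the paper's, and your sub-estimates (distributing $D_x^{\sigma+2-s}$ onto the kernel, Peetre's inequality to commute $\an{x}^{-2}$ past the convolution, Sobolev for the $L^6$ piece) are all sound. What is genuinely missing is a correct mechanism linking the fixed-time object $\|w*u\|_{\mathcal Z_\varphi}$ to the time-averaged quantity $p_s(u)$. Your schematic identity $w*u=\int\eta(t)S(t)(w*u)\,dt/\int\eta$ is false as written; the thing that is true for every $t$ is $w*u=(S(-t)w)*(S(t)u)$ (because $S(t)$ is a Fourier multiplier and hence commutes with convolution), and it is \emph{this} identity that one should integrate over $t\in[0,1]$. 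Your alternative ``$C^1$ in $t$'' fix is circular: bounding $\|S(t)(w*u)-w*u\|_{\mathcal Z_\varphi}$ produces $\sup_\tau\|S(\tau)(\Delta w * u)\|_{\mathcal Z_\varphi}$, which by the same commutation is $\sup_\tau\|\Delta w*(S(\tau)u)\|_{\mathcal Z_\varphi}$, a quantity of precisely the type you started out trying to bound, with no gain of smallness to close an induction. That loop never terminates, and it is also unnecessary.

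The paper sidesteps all of this by working in duality: since $\an{g,w*u}$ is a scalar independent of $t$, one writes $\an{g,w*u}=\an{\an{x}^2D^{-s}S(t)(w_1*g),\,\an{x}^{-2}D^sS(t)u}$ for each $t$ (with $w_1(x)=w(-x)$), applies Cauchy--Schwarz in $x$, and then takes the $L^2$ norm in $t\in[0,1]$ of the constant left-hand side, which makes $p_s(u)$ appear in the second factor while the first factor is controlled uniformly in $t$ via Fourier analysis on the kernel. A non-dual version of your argument would work too, provided you (i) replace your averaging identity by $w*u=(S(-t)w)*(S(t)u)$ and integrate the square over $t\in[0,1]$, and (ii) prove the per-time estimate $\|(S(-t)w)*v\|_{\mathcal Z_\varphi}\le C(w)\|\an{x}^{-2}D^sv\|_{L^2}$ with $C(w)$ uniform in $t\in[0,1]$, noting that $S(-t)w$ is no longer compactly supported (it is merely Schwartz, uniformly in $t\in[0,1]$), so the weight-commutation step must use rapid decay rather than compact support. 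Finally, your observation that the stated inequality should carry the constant $C(w)$ is correct; the paper's own proof ends with $\|w*u\|_{\mathcal Z_\varphi^2}\le C(w)p_s(u)$, and the omission of $C(w)$ in the lemma's display is a typo.
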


\begin{proof} We write $\mathcal Z_\varphi =  \mathcal Z_\varphi^2 \cap \mathcal Z_\varphi^6$ where $\mathcal Z_\varphi^2$ is the space induced by the norm 
$$
\|\an{x}^{-2}(1+\varphi)^{-1} D_x^{\sigma+2}  f\|_{L^2}
$$
and $\mathcal Z_\varphi^6$ is the space induced by the norm
$$
\|\an{x}^{-2}(1+\varphi)^{-1/3} f\|_{L^6}.
$$

We proceed by duality. Let $g$ be in the dual of $\mathcal Z_\varphi^2$, that is 
$$
\|(1+\varphi)\an{x}^2 D^{-2-\sigma} g\|_{L^2} < \infty .
$$
We estimate $\an{g, w*u}$ where $\an{\cdot ,\cdot}$ is the inner product. We have $\an{g,w*u} = \an{w_1*g,u}$ with $w_1(x) = w(-x)$. For all $t \in \R$, we have 
$$
\an{g,w*u} = \an{ \an{x}^2 D^{-s} S(t) (w_1 * g),\an{x}^{-2} D^s S(t) u }.
$$
And hence we get for all $t$,
$$
|\an{g,w*u}| \leq \|\an{x}^2 D^{-s} S(t) (w_1 * g)\|_{L^2_x}\|\an{x}^{-2} D^s S(t) u\|_{L^2_x}
$$
and as the left hand side does not depend on $t$, we can take the $L^2$ norm in time between $0$ and $1$ to get
$$
|\an{g,w*u}| \leq \sup_{t\in [0,1]}\Big(\|\an{x}^2 D^{-s} S(t) (w_1 * g)\|_{L^2_x}\Big)\|\,\|\an{x}^{-2} D^s S(t) u\|_{L^2_x}\|_{L^2(t\in[0,1])}
$$
which yields
$$
|\an{g,w*u}| \leq \sup_{t\in [0,1]}\|\an{x}^2 D^{-s} S(t) (w_1 * g)\|_{L^2_x}p_s( u).
$$

We estimate $\|\an{x}^2 D^{-s} S(t) (w_1 * g)\|_{L^2_x}$. We consider the Fourier transform to get
$$
\|\an{x}^2 D^{-s} S(t) (w_1 * g)\|_{L^2_x} = \|D_k^2 \an{k}^{-s} e^{-ik^2 t} \hat w_1(k) \hat g(k)\|_{L^2_x}= \|D_k^2 \an{k}^{-s+2+\sigma} e^{-ik^2 t} \hat w_1(k) \an{k}^{-2-\sigma}\hat g(k)\|_{L^2_x}.
$$
We distribute $D_k^2$ to get
\begin{multline*}
\|\an{x}^2 D^{-s} S(t) (w_1 * g)\|_{L^2_x} \leq \|D_k^2 \an{k}^{-s+2+\sigma} e^{-ik^2 t} \hat w_1(k)\|_{L^\infty_k} \|\an{k}^{-2-\sigma}\hat g(k)\|_{L^2_x} + \\
\|\an{k}^{-s+2+\sigma} e^{-ik^2 t} \hat w_1(k)\|_{L^\infty_k}  \|D_k^2\an{k}^{-2-\sigma}\hat g(k)\|_{L^2_x}.
\end{multline*}

We have 
$$
 \|\an{k}^{-2-\sigma}\hat g(k)\|_{L^2_x} = \| D^{-2-\sigma} g\|_{L^2} \leq \|(1+\varphi) \an{x}^2 D^{-2-\sigma} g\|_{L^2}
 $$
and  
$$
\|D_k^2\an{k}^{-2-\sigma}\hat g(k)\|_{L^2_x} = \|\an{x}^2 D^{-2-\sigma} g\|_{L^2} \leq \|(1+\varphi) \an{x}^2 D^{-2-\sigma} g\|_{L^2}.
$$

Regarding $w$, we see that for $t\in [0,1]$,
\begin{eqnarray*}
\|D_k^2 \an{k}^{-s+2+\sigma} e^{-ik^2 t} \hat w_1(k)\|_{L^\infty_k}& \leq &\|\an{k}^{-s+4 + \sigma}\hat w_1(k)\|_{L^\infty_k}+ \|\an{k}^{-s+2+\sigma} D_k^2 \hat w_1 \|_{L^\infty_k}\\
\|\an{k}^{-s+2+\sigma} e^{-ik^2 t} \hat w_1(k)\|_{L^\infty_k}  & \leq & \|\an{k}^{-s+2+\sigma} \hat w_1(k)\|_{L^\infty_k} 
\end{eqnarray*}
and taking the inverse Fourier transform
\begin{eqnarray*}
\|\an{k}^{-s+4 + \sigma}\hat w_1(k)\|_{L^\infty_k} & \leq & \|D_x^{-s+4 + \sigma} w_1\|_{L^1_x}\\
\|\an{k}^{-s+2+\sigma} D_k^2 \hat w_1 \|_{L^\infty_k} & \leq & \|D_x^{-s+2+\sigma}\an{x}^2 w_1 \|_{L^1_x} \\
\|\an{k}^{-s+2+\sigma} \hat w_1(k)\|_{L^\infty_k} &\leq & \|D_x^{-s+2 + \sigma} w_1\|_{L^1_x}.
\end{eqnarray*}
Given that $w_1$ is $\mathcal C^\infty$ with compact support, all these quantities are finite and 
$$
|\an{g,w*u}|\leq C(w) \|\an{x}^2 D^{-2-\sigma} g\|_{L^2} p_s( u).
$$

Therefore, as it is true for all $g$ in the dual of $\mathcal Z_\varphi^2$,
$$
\|w*u\|_{\mathcal Z_\varphi^2} \leq C(w) p_s(u).
$$

The same proof applies for $\mathcal Z_\varphi^6$.
\end{proof}

\begin{proposition}The measure $\rho_N$ is invariant under the flow $\psi_N$ for the topological $\sigma$-algebra of $\mathcal Z_\varphi$. \end{proposition}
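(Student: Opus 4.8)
The claim is that, for every $t\in\R$, the image measure of $\rho_N$ under $\psi_N(t)$ and $\rho_N$ itself coincide as finite Borel measures on the normed space $\mathcal Z_\varphi$ (both are indeed carried by $\mathcal Z_\varphi$: $\rho_N$ by Lemma \ref{lem-welldef} at $t=0$, and $\psi_N(t)u\in\mathcal Z_\varphi$ for $\rho_N$-a.e.\ $u$ again by Lemma \ref{lem-welldef}, the flow being measurable by \cite{lastbaby}). Since a finite Borel measure on a metric space is determined by its integrals against bounded continuous functions, it suffices to prove
\[
\E_{\rho_N}\big(F\circ\psi_N(t)\big)=\E_{\rho_N}(F)
\]
for every bounded continuous $F:\mathcal Z_\varphi\to\R$; equality for all bounded Borel $F$, i.e.\ invariance in the topological $\sigma$-algebra of $\mathcal Z_\varphi$, then follows.

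The plan to get this identity is to transport, via mollification, the invariance of $\rho_N$ under $\psi_N$ already known in the topology $\mathcal Y_s$ of \cite{lastbaby}; Lemma \ref{lem-meas} is precisely the device that moves $\mathcal Y_s$ into $\mathcal Z_\varphi$. Fix a bounded continuous $F$ on $\mathcal Z_\varphi$ and an approximate identity $(w_n)_n\subset\mathcal C^\infty_c(\R)$, and set $F_n(u):=F(w_n*u)$. By Lemma \ref{lem-meas} the map $u\mapsto w_n*u$ is bounded linear, hence continuous, from $\mathcal Y_s$ into $\mathcal Z_\varphi$, so each $F_n$ is a bounded Borel function on $\mathcal Y_s$, and the invariance of $\rho_N$ in the $\sigma$-algebra of $\mathcal Y_s$ recalled above gives, for every $n$,
\[
\E_{\rho_N}\big(F(w_n*\psi_N(t)u)\big)=\E_{\rho_N}\big(F(w_n*u)\big).
\]
I would then let $n\to\infty$: by Lemma \ref{lem-welldef}, $u\in\mathcal Z_\varphi$ and $\psi_N(t)u\in\mathcal Z_\varphi$ for $\rho_N$-a.e.\ $u$, so if one knows that $w_n*g\to g$ in $\mathcal Z_\varphi$ for every $g\in\mathcal Z_\varphi$ (see below), continuity of $F$ yields $F(w_n*\psi_N(t)u)\to F(\psi_N(t)u)$ and $F(w_n*u)\to F(u)$ pointwise $\rho_N$-a.e.; since $|F_n|\le\|F\|_{L^\infty}$, dominated convergence gives $\E_{\rho_N}(F\circ\psi_N(t))=\E_{\rho_N}(F)$, as wanted.

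The one step with real content — and the main obstacle — is the convergence $w_n*g\to g$ in $\mathcal Z_\varphi$ for every $g\in\mathcal Z_\varphi$ (which also yields $w_n*g\in\mathcal Z_\varphi$, so that the $F_n$ are defined), the difficulty being that $\varphi$ is an arbitrary increasing function, so that $\an{x}^{-2}(1+\varphi)^{-1}$ need not commute, even approximately, with a symmetric mollifier. I would get around this by taking the $(w_n)_n$ to be \emph{one-sided}: $w_n=\delta_n^{-1}\eta(\cdot/\delta_n)$ with $\eta\ge0$ bounded, $\mathrm{supp}\,\eta\subset[0,1]$, $\int\eta=1$, $\delta_n\to0$. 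Then $(w_n*g)(x)$ samples $g$ only at points $x-y\le x$, and since $\varphi$ is increasing one has $\an{x}^{-2}(1+\varphi(x))^{-1}\lesssim\an{x-y}^{-2}(1+\varphi(x-y))^{-1}$ for $0\le y\le\delta_n\le1$. Writing $\mathcal Z_\varphi=\mathcal Z_\varphi^2\cap\mathcal Z_\varphi^6$ as in the proof of Lemma \ref{lem-meas}, using that $D_x^{\sigma+2}$ commutes with convolution, and setting $W(x)=\an{x}^{-2}(1+\varphi(x))^{-1}$, $h=D_x^{\sigma+2}g$ for the $L^2$ component (and analogously $W(x)=\an{x}^{-2}(1+\varphi(x))^{-1/3}$, $h=g$ for the $L^6$ one), one decomposes
\[
W(w_n*h)-Wh=\big(w_n*(Wh)-Wh\big)+R_n,\qquad R_n:=W(w_n*h)-w_n*(Wh).
\]
The first term tends to $0$ in $L^2$ (resp.\ $L^6$) by continuity of translations, since $Wh\in L^2$ (resp.\ $L^6$) by hypothesis; for $R_n$, the one-sided weight bound gives $|R_n|\lesssim M(|Wh|)$ with $M$ the Hardy--Littlewood maximal operator, bounded on $L^2$ and on $L^6$, while $R_n\to0$ pointwise a.e.\ by the a.e.\ convergence of approximate identities applied to $h$ and to $Wh$; hence $R_n\to0$ in $L^2$ (resp.\ $L^6$) by dominated convergence. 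This establishes $w_n*g\to g$ in $\mathcal Z_\varphi$ and closes the argument; everything else is soft measure theory together with Lemmas \ref{lem-welldef} and \ref{lem-meas}.
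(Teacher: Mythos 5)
Your proof follows essentially the same route as the paper's: mollify, use Lemma \ref{lem-meas} to move from $\mathcal Y_s$ to $\mathcal Z_\varphi$, invoke the known $\mathcal Y_s$-invariance from \cite{lastbaby}, and pass to the limit by dominated convergence. The difference is that you make two steps explicit that the paper's proof glosses over: (i) you first reduce to bounded \emph{continuous} $F$ (a finite Borel measure on a metric space is determined by such integrals), which is needed because dominated convergence only exchanges limit and integral once one already knows $F(w_n*v)\to F(v)$ pointwise, and that requires continuity of $F$; and (ii) you actually prove $w_n*g\to g$ in $\mathcal Z_\varphi$, which the paper implicitly uses but never establishes. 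These are genuine improvements in rigor.

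One caveat on your fix for (ii): the one-sided bound $\an{x}^{-2}(1+\varphi(x))^{-1}\lesssim\an{x-y}^{-2}(1+\varphi(x-y))^{-1}$ for $0\le y\le 1$ requires $\varphi$ to be monotone increasing on all of $\R$. Although the Notations section phrases the hypothesis that way, the weight $\varphi$ actually constructed and used downstream (Lemma \ref{lem-boundL6}, Lemma \ref{lem-magicwithoutder}, Proposition \ref{propest}) is \emph{even} and increasing only on $\R^+$; for such $\varphi$ and $x<0$, one has $|x-y|>|x|$, hence $\varphi(x-y)\ge\varphi(x)$, and your inequality reverses. The fix is easy — the setup is symmetric under $x\mapsto-x$, so use a right-sided mollifier on $\R_-$ and a left-sided one on $\R_+$ (or split the integral defining $R_n$ at the origin and treat the two halves with the mirror-image bounds) — but as written the argument is only valid on the half-line $x\ge 0$, and you should note this. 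With that repair the proof is complete and in fact cleaner than the paper's.
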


\begin{proof} Let $F$ be a bounded measurable function on $\mathcal Z_\varphi$. As for $\rho_N$ almost all $u$, $\psi_N(t) u$ belongs to $\mathcal Z_\varphi$ (Lemma \ref{lem-welldef}), and since $\rho_N$ is defined on $\mathcal Z_\varphi$, we have that $\E_{\rho_N} (F\circ \psi_N(t))$ is well-defined. 

Let $w_k$ be a sequence of $\mathcal C^\infty $ functions with compact supports which converges towards a Dirac delta. Let $F_k : u\mapsto F(w_k * u)$. Thanks to Lemma \ref{lem-meas}, we have that $u\mapsto w_k*u$ is continuous and hence measurable from $\mathcal Y_s$ to $\mathcal Z_\varphi$ and thus $F_k$ is measurable and bounded on $\mathcal Y_s$. We deduce 
$$
\E_{\rho_N}(F_k \circ \psi_N(t)) = \E_{\rho_N}(F_k ).
$$
As $\psi_N(t) u$ belongs almost surely to $\mathcal Z_\varphi$ and $F$ is bounded, we can apply the dominated convergence theorem to pass to the limit when $k\rightarrow \infty$, which yields
$$
\E_{\rho_N}(F \circ \psi_N(t)) = \E_{\rho_N}(F )
$$
for all $t$ and concludes the proof. \end{proof}

\section{Proof of the theorem}\label{proofs}

Before we start applying the results of the last section to prove the theorem, we state two useful and central lemmas.

\subsection{Two technical results}\label{lemmas}

\begin{lemma}\label{lem-magicwithoutder} Let $r\geq  1$, there exist a non-negative, even and increasing on $\R^+$ function $\varphi_r$ such that for all $x\in \R$ and all $N \in \N$, we have 
$$
\E_{\mu_N}(|u(x)|^r) \leq \varphi_r(x) \, ,\, \E_{\rho}(|u(x)|^r) \leq \varphi_r(x) .
$$
\end{lemma}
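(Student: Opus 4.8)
The strategy is to reduce everything to an explicit computation against the Gaussian (oscillator process) measure $dq$, with the exponential weight working in our favour. For the bound on $\E_\rho(|u(x)|^r)$, I would use the Feynman-Kac description of $\rho$ as the $P(\phi)_1$-process (Definition \ref{Pphi}): the marginal law of $u(x)$ under $\rho$ has density $\Omega_V^2$ with respect to Lebesgue measure, where $V\cong |x|^4$ and $\Omega_V$ decays like $e^{-|x|^3}$ by the WKB asymptotics of Remark \ref{decayOmega}. Hence $\E_\rho(|u(x)|^r) = \int_\R |y|^r \Omega_V^2(y)\,dy$ is a finite constant \emph{independent of $x$}; call it $c_r$. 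So for $\rho$ the function $\varphi_r$ can be taken constant equal to $c_r$ (or any nonnegative even increasing majorant of it, e.g. $c_r\an{x}$).

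For the bound on $\E_{\mu_N}(|u(x)|^r)$ the point is to get a majorant that does \emph{not} depend on $N$, since $D_N\to 0$ and a naive bound $\E_{\mu_N}(|u(x)|^r) \le D_N^{-1}\E_q(|u(x)|^r)$ blows up. Instead I would argue as follows. Fix $x\in\R$. If $|x|\le N$, then the cutoff region $[-N,N]$ contains $x$, and I would split the $q$-integral using the Markov property of the oscillator process: conditionally on $u(x)=y$, the process to the left and right of $x$ are independent Ornstein–Uhlenbeck bridges, and the exponential weight $e^{-\frac12\int_{-N}^N|u|^4}$ only decreases the relevant integrals. More precisely, using Proposition \ref{semig}/Mehler's formula one writes
$$
\E_{\mu_N}(|u(x)|^r) = D_N^{-1}\int |y|^r\, \Omega_0(y)^2\, g_N^-(y)\, g_N^+(y)\, dy
$$
where $g_N^\pm(y) = \E\big(e^{-\frac12\int |u|^4}\,\big|\,u(x)=y\big)$ over the two half-intervals, and the normalisation is $D_N = \int \Omega_0(y)^2 g_N^-(y) g_N^+(y)\,dy$. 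Since $g_N^\pm(y)\in(0,1]$ and, crucially, $g_N^\pm$ converge as $N\to\infty$ (by the same Feynman–Kac machinery of Theorem \ref{feykac}, this is exactly the statement that $\mu_N\to\rho$) to the corresponding half-line quantities whose product times $\Omega_0^2$ is (up to the normalisation) $\Omega_V^2$, one gets that $|y|^r\,\Omega_0(y)^2 g_N^-(y)g_N^+(y)/D_N$ is dominated uniformly in $N$ by an integrable function of $y$. Integrating gives $\E_{\mu_N}(|u(x)|^r)\le C_r$ uniformly in $N$ and in $x$ with $|x|\le N$.

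For $|x|>N$ the weight $e^{-\frac12\int_{-N}^N|u|^4}$ does not see the point $x$ at all, so conditioning on the values of $u$ at $\pm N$ and using that the OU process outside $[-N,N]$ is just a (shifted) stationary process, $\E_{\mu_N}(|u(x)|^r)$ reduces to $\E_q(|u(N)|^r \cdot (\text{bounded}))/D_N$-type expression; here one uses stationarity of $q$ to move the base point and the exponential decay of the conditional moments. This case contributes a bound that grows at most polynomially (in fact can also be taken bounded, since for $|x| > N$ the process $u(x)$ given $u(N)$ is stationary OU and its $r$-th moment conditioned on $u(N)=y$ is $\lesssim 1+|y|^r$, and $\E_{\mu_N}(1+|u(N)|^r)$ is again bounded by Case 1 applied at the endpoint). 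Taking $\varphi_r(x)$ to be a nonnegative, even, increasing majorant of the finite supremum over $N$ and over the two cases — e.g. $\varphi_r(x) = C_r$ — finishes the proof.

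**Main obstacle.** The delicate point is the uniform-in-$N$ control near the normalisation: one must show that the conditional weights $g_N^\pm(y)$ stay bounded below (after dividing by $D_N$) in a way that, multiplied by $|y|^r$, remains integrable, i.e. that dividing by the vanishing constant $D_N$ is exactly compensated by the decay of $g_N^\pm$. This is where the Feynman–Kac convergence $\mu_N\to\rho$ (Theorem \ref{feykac}) together with the ground-state decay estimate $\Omega_V(y)\cong e^{-|y|^3}$ (Remark \ref{decayOmega}) must be combined; the rest is bookkeeping with Mehler's kernel and the Markov property of the oscillator process. I would expect the cleanest write-up to phrase the whole thing directly in terms of the kernels $e^{-y\hat L}(a,b)$ appearing in Definition \ref{Pphi} and the test-function formulation of Theorem \ref{feykac}, applied to (a smooth bounded truncation of) $G(u)=|u(x)|^r$, and then to remove the truncation by monotone convergence using the a priori Gaussian tail of $|u(x)|$ under each $\mu_N$ (which follows since $d\mu_N \le D_N^{-1} dq$ and $q$ is Gaussian, giving the crude but sufficient domination needed to pass to the limit inside the integral).
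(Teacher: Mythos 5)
Your overall framework is the same as the paper's: work with the Feynman--Kac/$P(\phi)_1$ representation, treat $|x|\le N$ and $|x|>N$ separately, use the $\Omega_V^2$ marginal for $\rho$. The treatment of $\rho$ is correct and matches the paper, and your handling of the case $|x|>N$ (conditioning on $u(\pm N)$, using the Markov property of the OU process, and reducing to the case-1 bound at the endpoint) is sound and arguably cleaner than what the paper does there (the paper simply takes a finite max of $D_N^{-1}$-type bounds over $N<|x|$, which is allowed to grow in $x$ since $\varphi_r$ need only be increasing).

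The genuine gap is in the central case $|x|\le N$. You correctly identify the obstacle --- that the factor $\Omega_0(y)^2\,g_N^-(y)\,g_N^+(y)/D_N$ must be dominated uniformly in $N$ by an integrable function of $y$ --- but you do not actually prove it. Saying that ``$g_N^\pm$ converge by Feynman--Kac'' is not enough: pointwise or even $L^1$ convergence of $\mu_N$ to $\rho$ does not by itself give a uniform-in-$N$ dominating function, especially since $D_N\to 0$ and the ratio is $0/0$-like. The paper closes exactly this hole with a concrete comparison: after the Feynman--Kac rewrite, the $\mu_N$-expectation is expressed through kernels $e^{-s\hat L}(u_1,u_2)$ with $\hat L = L_0 + |u|^4 - E(V)$, and since $\hat L - \hat L_0 = |u|^4\ge 0$, the maximum principle gives $e^{-s\hat L}(u_1,u_2)\le e^{-s\hat L_0}(u_1,u_2)$ pointwise. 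Integrating against $\Omega_0$ (an eigenfunction of $\hat L_0$) then collapses everything to $\int |y|^r\,\Omega_0(y)^2\,dy =: C_r$, uniform in both $N$ and $x$. That single comparison is the key idea missing from your write-up; without it, the ``domination'' step does not hold up.
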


\begin{remark} This result may be seen as a consequence of an estimate on the ground state $\Omega_V$ of $L$, \cite{D90}, or as a consequence of a Brascamp-Lieb inequality, as in \cite{B00}. \end{remark}

\begin{proof} Let $x\in \R$ and let $N\geq |x|$. We apply Theorem 6.7 in \cite{SIMfunint} page 57 with 
$$
G(u) = |u(x)|^r \Omega_0(u(-N)) \Omega_V^{-1}(u(-N))\Omega_0(u(N)) \Omega_V^{-1}(u(N))e^{-2E(V) N}
$$
where we recall that $\Omega_V$ is the eigenstate associated to the non-degenerate first eigenvalue $E(V)$ of $L= -\frac12 \lap_u + |u|^2+|u|^4-\frac12$ and $\Omega_0$ is the eigenstate associated to the non-degenerate first eigenvalue $0$ of $L_0 = -\frac12\lap_u + |u|^2-\frac12$. We get on the one hand
$$
\int G(u) \Omega_V(u(-N)) \Omega_0^{-1}(u(-N))\Omega_V(u(N)) \Omega_0^{-1}(u(N))e^{2E(V) N} d\mu_N(u) = \int |u(x)|^r d\mu_N(u)
$$
and on the other hand
\begin{multline*}
\int G(u) \Omega_V(u(-N)) \Omega_0^{-1}(u(-N))\Omega_V(u(N)) \Omega_0^{-1}(u(N))e^{2E(V) N} d\mu_N(u) =\\
\int \tilde G(u_{-N},u_x,u_N)
\Omega_V(u_N) \Omega_V(u_{-N}) e^{-(x+N)\hat L} (u_{-N},u_x) e^{-(N-x)\hat L} (u_x,u_N) du_{-N}du_x du_N
\end{multline*}
with 
$$ \tilde G(u_{-N,u_x,u_N)}=
|u_x|^r \Omega_0(u_{-N}) \Omega_V^{-1}(u_{-N})\Omega_0(u_N) \Omega_V^{-1}(u_N)e^{-2E(V) N} .
$$
We recall that $e^{-s\hat L}(u_1,u_2)$ is the fundamental solution to $\partial_s y= -\hat L y$, that is
$$
y(s,u_2) = \int du_1 e^{-s\hat L}(u_1,u_2)y(0,u_1)
$$
and $\hat L = L - E(V)$. 

By simplifying the $\Omega_V$ we get
$$
\int |u(x)|^r d\mu_N(u) = \int |u_x|^r  \Omega_0(u_{-N}) \Omega_0(u_N)e^{-2E(V) N} e^{-(x+N)\hat L} (u_{-N},u_x) e^{-(N-x)\hat L} (u_x,u_N) du_{-N}du_x du_N.
$$

Let $\hat L_0 = L_0 - E(V)$, we have $\hat L - \hat L_0 = |x|^4$, thus by the maximum principle, we get $e^{-s\hat L}(u_1,u_2) \leq e^{-s \hat L_0} (u_1,u_2)$. Therefore
\begin{multline*}
\int |u(x)|^r d\mu_N(u) \leq \\
\int |u_x|^r  \Omega_0(u_{-N}) \Omega_0(u_N)e^{-2E(V) N} e^{-(x+N)\hat L_0} (u_{-N},u_x) e^{-(N-x)\hat L_0} (u_x,u_N) du_{-N}du_x du_N.
\end{multline*}
By definition of $\Omega_0$, we have that 
$$
\int du_1 \Omega_0(u_1) e^{-s\hat L_0}(u_1,u_2) = e^{sE(V)} \Omega_0 (u_2) = \int du_1 \Omega_0(u_1) e^{-s\hat L_0}(u_2,u_1).
$$
Hence integrating over $u_N$ and $u_{-N}$ yields
$$
\int |u(x)|^r d\mu_N(u) \leq \int |u_x|^r  \Omega_0(u_x)^2 e^{-2E(V) N} e^{(x+N)E(V)}  e^{(N-x)E(V)} du_x 
$$
and thus
$$
\int |u(x)|^r d\mu_N(u) \leq \int |u_x|^r  \Omega_0(u_x)^2  du_x .
$$
We have that $\Omega_0(u)$ behaves as $e^{-c|u|^2}$ hence the above quantity is finite. Therefore, there exists a constant, depending only on $r$, $C_r$ such that for all $N\geq |x|$, 
$$
\int |u(x)|^r d\mu_N(u) \leq C_r .
$$
Let 
$$
\varphi_r(x) = \max_{N< |x|} \E_{\mu_N}(|u(x)|^r) \leq C_r \max_{N< |x|} D_N^{-1} < \infty.
$$
We have that $\varphi_r $ is a non negative, increasing function.

Finally, from Theorem 6.9 in \cite{SIMfunint} page 58, we get
$$
\E_\rho(|u(x)|^r) \leq C_r \leq \varphi_r(x).
$$
\end{proof}

We now include derivatives in our analysis.

\begin{lemma}\label{lem-magicwithder}Let $r\geq  2$ and $\alpha < \frac12$ such that $0\leq \alpha \leq \min(\frac2{r}, 1- \frac3{2r})$, there exist a non-negative, increasing on $\R^+$, even function $\varphi_{\alpha,r}$ such that for all $x,y\in \R$, $|x|\geq |y|$ and all $N \in \N$, we have 
$$
\E_{\mu_N}\Big(\frac{|u(x)-u(y)|^r}{|x-y|^{1+\alpha r}}\Big) \leq \varphi_{\alpha ,r}(x)  \, ,\, \E_{\rho}\Big(|\frac{|u(x)-u(y)|^r}{|x-y|^{1+\alpha r}}\Big) \leq \varphi_{\alpha ,r}(x) .
$$
\end{lemma}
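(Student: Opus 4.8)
The strategy mirrors the proof of Lemma \ref{lem-magicwithoutder}: pass through the Feynman-Kac representation of $\mu_N$ (Theorem 6.7 in \cite{SIMfunint}) to reduce the expectation to a finite-dimensional integral against $\Omega_0^2$ and the kernels $e^{-s\hat L_0}$, use the maximum principle $e^{-s\hat L}\le e^{-s\hat L_0}$ to drop the quartic potential, integrate out the endpoints $u_{\pm N}$ using $\int \Omega_0(u_1)e^{-s\hat L_0}(u_1,u_2)\,du_1 = e^{sE(V)}\Omega_0(u_2)$, and finally take the supremum over $N<|x|$ to define $\varphi_{\alpha,r}$. The point of departure is that now we have \emph{two} points $x,y$ with, say, $y<x$ and both in $[-N,N]$; the finite-dimensional integral becomes one over $u_{-N},u_y,u_x,u_N$ with kernels $e^{-(y+N)\hat L_0}(u_{-N},u_y)$, $e^{-(x-y)\hat L_0}(u_y,u_x)$, $e^{-(N-x)\hat L_0}(u_x,u_N)$. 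After integrating out $u_{\pm N}$ exactly as before, we are left with
$$
\E_{\mu_N}\!\Big(\tfrac{|u(x)-u(y)|^r}{|x-y|^{1+\alpha r}}\Big)
\ \le\ \frac{1}{D_N}\int \frac{|u_x-u_y|^r}{|x-y|^{1+\alpha r}}\,\Omega_0(u_y)\Omega_0(u_x)\,e^{-(x-y)\hat L_0}(u_y,u_x)\,du_y\,du_x,
$$
for $N\ge|x|$ (with the constant $1$ in place of $1/D_N$), so the whole problem reduces to a bound on the two-variable integral involving the Mehler-type kernel $Q_{x-y}=e^{-(x-y)L_0}$ on a time interval of length $h:=x-y$.

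The core estimate is therefore: for $h>0$,
$$
I(h):=\int_{\R^2} |u_1-u_2|^r\,\Omega_0(u_1)\,\Omega_0(u_2)\,Q_h(u_1,u_2)\,du_1\,du_2 \ \lesssim_r\ h^{1+\alpha r}
$$
whenever $0\le \alpha\le \min(2/r,\,1-\tfrac{3}{2r})$. I would prove this by exploiting the two regimes of $h$. For $h\gtrsim 1$ the kernel $Q_h(u_1,u_2)$ is uniformly bounded (by Mehler's formula, $Q_h\lesssim 1$ for $h\gtrsim 1$) and the Gaussian factors $\Omega_0$ make $\int |u_1-u_2|^r\Omega_0\Omega_0\lesssim 1\lesssim h^{1+\alpha r}$ trivially, since the exponent $1+\alpha r$ is nonnegative. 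For small $h$, I would use the representation of $Q_h$ as the transition kernel of the Ornstein--Uhlenbeck process: under $dq$ the pair $(q(y),q(x))$ is jointly Gaussian with $\E(q(y)q(x))=\tfrac12 e^{-h}$ and $\E(q(y)^2)=\E(q(x)^2)=\tfrac12$, so $q(x)-q(y)$ is a centred Gaussian with variance $1-e^{-h}\asymp h$ for small $h$; hence $\E_q(|q(x)-q(y)|^r)\asymp h^{r/2}$. Matching against the required power $h^{1+\alpha r}$ forces $r/2\ge 1+\alpha r$, i.e. $\alpha\le \tfrac12-\tfrac1r$, which is implied by the stated hypotheses (indeed $1-\tfrac{3}{2r}\le\tfrac12-\tfrac1r$ when... one checks the constraint is consistent for $r\ge 2$); the estimates \eqref{estimQ1} on $Q_h$ together with the decay of $\Omega_0$ handle the contribution where $|u_1|+|u_2|$ is large so that the Gaussian tails beat the polynomial $|u_1-u_2|^r$ and the mild singularity $|u_1-u_2|^{-1}$ of $Q_h$ is integrable. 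The auxiliary exponent conditions $\alpha\le 2/r$ and $\alpha\le 1-\tfrac{3}{2r}$ are exactly what is needed to absorb the singular factors $|u_1-u_2|^{-1}$ (and its derivative-bound analogues in \eqref{estimQ1}) against $|u_1-u_2|^r$ with room to spare, i.e. to keep the local integral near the diagonal finite while still extracting the full power of $h$.

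Having established $I(h)\lesssim_r h^{1+\alpha r}$, I set
$$
\varphi_{\alpha,r}(x)\ :=\ \max_{N<|x|}\ \E_{\mu_N}\!\Big(\tfrac{|u(x)-u(y)|^r}{|x-y|^{1+\alpha r}}\Big)\ \vee\ C_{\alpha,r},
$$
which is finite (the finitely many terms $N<|x|$ are each finite by the same finite-dimensional computation with the extra factor $D_N^{-1}$), non-negative, and increasing in $|x|$; evenness follows since the OU process is stationary and reflection-symmetric. The bound for $\rho$ comes for free from the $N\to\infty$ limit: by Theorem 6.9 in \cite{SIMfunint} (the $P(\phi)_1$ limit) the expectation under $\mu_N$ converges to that under $\rho$ for the bounded test functionals, or more directly one computes $\E_\rho$ through Definition \ref{Pphi} and the same maximum-principle bound $\Omega_V^2\,e^{-h\hat L}\le \Omega_0^2\,e^{-h\hat L_0}$ (after integrating out the infinitely many far-away coordinates, which contribute $1$), giving $\E_\rho(\cdots)\le I(h)/h^{1+\alpha r}\lesssim_r 1\le \varphi_{\alpha,r}(x)$.

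The main obstacle I anticipate is the small-$h$ analysis of $I(h)$: one must extract the \emph{sharp} power $h^{1+\alpha r}$ (not merely $h^{r/2}$, which would be too weak only if $\alpha$ were allowed too large, but here one wants the clean statement) uniformly while controlling the diagonal singularity of $Q_h$ and the contribution of large $|u_1|,|u_2|$, and this is precisely where the somewhat awkward double constraint $0\le\alpha\le\min(2/r,1-\tfrac{3}{2r})$ on $\alpha$ enters — each half of the $\min$ corresponds to one of the two competing effects (integrability of the $|u_1-u_2|^{-1}$-type singularity versus matching the Gaussian scaling $h^{r/2}$). Everything else — the Feynman-Kac reduction, the maximum principle, integrating out endpoints, the definition of $\varphi_{\alpha,r}$ and the passage to $\rho$ — is a direct transcription of the argument already carried out for Lemma \ref{lem-magicwithoutder}.
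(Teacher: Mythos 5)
Your overall framework (Feynman--Kac via Theorem~6.7 of \cite{SIMfunint}, the maximum principle $e^{-s\hat L}\le e^{-s\hat L_0}$, integrating out $u_{\pm N}$ against $\Omega_0$, reducing to the two-point kernel integral $I(h)$ with $h=x-y$, then taking the supremum over $N<|x|$ and invoking Theorem~6.9 for $\rho$) is identical to what the paper does. The gap is in the core estimate on $I(h)/h^{1+\alpha r}$.

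You compute $I(h)=\E_q|q(x)-q(y)|^r\asymp h^{r/2}$ (correct: under $dq$ the increment is centred Gaussian with variance $1-e^{-h}$), and you correctly note that $I(h)\lesssim h^{1+\alpha r}$ then forces $\alpha\le\frac12-\frac1r$. But you then assert that the lemma's hypothesis $\alpha\le 1-\frac3{2r}$ \emph{implies} $\alpha\le\frac12-\frac1r$. This is the reverse of the truth for the stated range $r\ge 2$: one has $1-\frac3{2r}\le\frac12-\frac1r$ iff $r\le 1$, whereas for $r=2$ the first is $\frac14$ and the second is $0$, and for $r=4$ they are $\frac58$ and $\frac14$. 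So the lemma allows values of $\alpha$ (e.g.\ $\alpha=\frac14$ at $r=2$) that your variance argument does not reach, and the inequality you wave at does not rescue it. The exponent $1-\frac3{2r}$ is not incidental; it has to come out of a different mechanism.

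The paper's mechanism is different from yours and is exactly the one you flagged as the ``main obstacle'' without resolving it. Rather than evaluating $I(h)$ by the OU variance, the paper bounds the Mehler kernel pointwise: since $Q_0=0$ and $\partial_x Q\big|_{x=0}=0$ off the diagonal, one can Taylor-expand $Q_h$ in $h$ and interpolate the resulting bound against the singularity estimates \eqref{estimQ1}--\eqref{estimQ3} to obtain, for $\alpha r\le 1$ (and an analogous bound for $1\le\alpha r\le 2$),
\[
e^{-hL_0}(u_y,u_x)\,h^{-1-\alpha r}\lesssim 1+|u_x-u_y|^{-3/2-\alpha r}|u_x+u_y|^{1+\alpha r}.
\]
Plugging this in leaves the $h$-independent integral $\int |u_x-u_y|^{\,r-3/2-\alpha r}|u_x+u_y|^{1+\alpha r}\Omega_0(u_x)\Omega_0(u_y)\,du_x\,du_y$, and the constraint $r-3/2-\alpha r\ge 0$, i.e.\ $\alpha\le 1-\frac3{2r}$, is exactly what makes this integral finite; the second half of the $\min$, $\alpha\le\frac2r$, is the condition $\alpha r\le 2$ that the Taylor step (up to second derivative) requires. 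So the two constraints in the hypothesis correspond to the order of the Taylor expansion and the near-diagonal integrability, not to matching the Gaussian scaling $h^{r/2}$. If you want to complete the proof, you should replace your small-$h$ variance computation by this kernel-interpolation bound.
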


\begin{proof} We essentially use the same method as previously. Let $x,y \in \R$ and let $N \geq \max (|x|,|y|)$. We assume, without loss of generality, $x\geq y$. We apply Theorem 6.7 in \cite{SIMfunint} page 57 with 
$$
G(u) = \frac{|u(x)-u(y)|^r}{|x-y|^{1+\alpha r}}\Omega_0(u(-N)) \Omega_V^{-1}(u(-N))\Omega_0(u(N)) \Omega_V^{-1}(u(N))e^{-2E(V) N}
$$
We get on the one hand
$$
\int G(u) \Omega_V(u(-N)) \Omega_0^{-1}(u(-N))\Omega_V(u(N)) \Omega_0^{-1}(u(N))e^{2E(V) N} d\mu_N(u) = \int \frac{|u(x)-u(y)|^r}{|x-y|^{\alpha r+1}} d\mu_N(u)
$$
and on the other hand
\begin{multline*}
\int G(u) \Omega_V(u(-N)) \Omega_0^{-1}(u(-N))\Omega_V(u(N)) \Omega_0^{-1}(u(N))e^{2E(V) N} d\mu_N(u) = \\
\int \frac{|u_x-u_y|^r}{|x-y|^{\alpha r+1}} \Omega_0(u_{-N}) \Omega_V^{-1}(u_{-N})\Omega_0(u_N) \Omega_V^{-1}(u_N)e^{-2E(V) N} \\
\Omega_V(u_N) \Omega_V(u_{-N}) e^{-(y+N)\hat L} (u_{-N},u_y) e^{-(x-y)\hat L} (u_y,u_x) e^{-(N-x)\hat L} (u_x,u_N) du_{-N}du_ydu_x du_N
\end{multline*}

By simplifying the $\Omega_V$ we get
\begin{multline*}
\int \frac{|u(x)-u(y)|^r}{|x-y|^{\alpha r +1}} d\mu_N(u) = \\
\int  \tilde G
e^{-(y+N)\hat L} (u_{-N},u_y) e^{-(x-y)\hat L} (u_y,u_x) e^{-(N-x)\hat L} (u_x,u_N) du_{-N}du_ydu_x du_N
\end{multline*}
with
$$
\tilde G(u_{-N},u_y,u_x,u_N) =
\frac{|u_x-u_y|^r}{|x-y|^{\alpha r +1}} \Omega_0(u_{-N}) \Omega_0(u_N) e^{-2E(V) N} .
$$

Using as previously the maximum principle, we get
\begin{multline*}
\int \frac{|u(x)-u(y)|^r}{|x-y|^{\alpha r +1}} d\mu_N(u) \leq
\int \frac{|u_x-u_y|^r}{|x-y|^{\alpha r +1}} \Omega_0(u_{-N}) \Omega_0(u_N) e^{-2E(V) N}  e^{-(y+N)\hat L_0} (u_{-N},u_y) \\
e^{-(x-y)\hat L_0} (u_y,u_x) e^{-(N-x)\hat L_0} (u_x,u_N) du_{-N}du_ydu_x du_N.
\end{multline*}

Integrating over $u_{-N}$ and $u_N$ yields
$$
\int \frac{|u(x)-u(y)|^r}{|x-y|^{\alpha r +1}} d\mu_N(u) = \int \frac{|u_x-u_y|^r}{|x-y|^{\alpha r +1}} \Omega_0(u_x) \Omega_0(u_y)    e^{-(x-y) L_0} (u_y,u_x) du_ydu_x .
$$
We remark that the $\hat L_0$ as turned into $L_0$ as we simplified with $e^{-2E(V) N}$.

When $\alpha r \leq 1$, we use the estimates \eqref{estimQ1}, \eqref{estimQ2} and the fact that the derivative at $z=0$ of $e^{-zL_0}(u_1,u_2)$ is $0$ outside the diagonal $u_1=u_2$ to get
$$
e^{-(x-y) L_0} (u_y,u_x) |x-y|^{-1-\alpha r} \lesssim (1+ |u_x-u_y|^{-3/2-\alpha r}|u_x+u_y|^{1+\alpha r })
$$
We get 
$$
\int \frac{|u(x)-u(y)|^r}{|x-y|^{1+\alpha r}} d\mu_N(u) \leq C_{r,\alpha} \int |u_x-u_y|^{r-3/2-\alpha r} \Omega_0(u_x) \Omega_0(u_y)  |u_x+u_y|^{\alpha r+1 }   du_ydu_x .
$$
With the choice of $\alpha$, $r-3/2-\alpha r$ is non-negative, and since $\Omega_0(u)$ behaves like $e^{-c|u|^2}$, the above quantity is finite and does not depend on $x$ or $y$. Hence, there exists $C_{r,\alpha}$ such that for all $N \geq \max(|x|,|y|)$,
$$
\int \frac{|u(x)-u(y)|^r}{|x-y|^{1+\alpha r}} d\mu_N(u) \leq C_{r,\alpha}.
$$

When $1\leq \alpha r \leq 2$, we use the estimates \eqref{estimQ2}, \eqref{estimQ3} and the fact that the two first derivatives at $z=0$ of $e^{-zL_0}(u_1,u_2)$ are $0$ outside the diagonal $u_1=u_2$ to get
$$
e^{-(x-y) L_0} (u_y,u_x) |x-y|^{-1-\alpha r} \lesssim (1+ |u_x-u_y|^{-3/2-\alpha r}|u_x+u_y|^{1+\alpha r })
$$
We get 
$$
\int \frac{|u(x)-u(y)|^r}{|x-y|^{1+\alpha r}} d\mu_N(u) \leq C_{r,\alpha} \int |u_x-u_y|^{r-3/2-\alpha r} \Omega_0(u_x) \Omega_0(u_y)  |u_x+u_y|^{\alpha r+1 }   du_ydu_x .
$$
With the choice of $\alpha$, $r-3/2-\alpha r$ is non-negative, and since $\Omega_0(u)$ behaves like $e^{-c|u|^2}$, the above quantity is finite and does not depend on $x$ or $y$. Hence, there exists $C_{r,\alpha}$ such that for all $N \geq \max(|x|,|y|)$,
$$
\int \frac{|u(x)-u(y)|^r}{|x-y|^{1+ \alpha r}} d\mu_N(u) \leq C_{r,\alpha}.
$$

For $N\leq \max(|x|,|y|)$, we have
$$
\int \frac{|u(x)-u(y)|^r}{|x-y|^{\alpha r +1}} d\mu_N(u) \leq D_N^{-1} \E\Big(\frac{|q(x)-q(y)|^r}{|x-y|^{\alpha r +1}}\Big).
$$
As $\alpha < \frac12$, we get that the mean value on the oscillator process is finite. Let
$$
C_{r,\alpha}'= \Big( \min\Big( \E\Big(\frac{|q(x)-q(y)|^r}{|x-y|^{\alpha r +1}}\Big),C_{r,\alpha}\Big)\Big)
$$
and 
$$
\varphi_{\alpha,r} (x) = C_{r,\alpha}'\max_{N\leq |x|} D_N^{-1}.
$$
For all $N$, $D_N \leq 1$, hence for all $N$,
$$
\E_{\mu_N}\Big(\frac{|u(x)-u(y)|^r}{|x-y|^{\alpha r +1}}\Big) \leq \varphi_{\alpha ,r}(\max(|x|,|y|)) .
$$

Finally, from Theorem 6.9 in \cite{SIMfunint} page 58, we get
$$
\E_{\rho}\Big(|\frac{|u(x)-u(y)|^r}{|x-y|^{\alpha r +1}}\Big) \leq \varphi_{\alpha ,r}(\max(|x|,|y|)) .
$$
\end{proof}

\subsection{Convergence of \texorpdfstring{$\rho_N$}{rhoN} towards \texorpdfstring{$\rho$}{rho}}\label{conv}

In this subsection, we prove that the sequence $\rho_N$ converges towards $\rho$ in law.

\begin{proposition}\label{prop-convmeas} For all non-negative increasing function $\varphi$, we have that the sequence $\rho_N$ converges towards $\rho$ in law in the sense that for all bounded measurable function $F$ from $\mXe_\varphi$ to $\R$, the sequence $\E_{\rho_N}(F)$ converges towards $\E_\rho(F)$.\end{proposition}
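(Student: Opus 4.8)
The plan is to interpose the auxiliary measures $\mu_N$ between $\rho_N$ and $\rho$: for a bounded measurable $F:\mXe_\varphi\to\R$ I would write
$$
\E_{\rho_N}(F)-\E_{\rho}(F)=\big(\E_{\rho_N}(F)-\E_{\mu_N}(F)\big)+\big(\E_{\mu_N}(F)-\E_{\rho}(F)\big),
$$
and show the first bracket tends to $0$ by a total variation estimate and the second by the Feynman--Kac convergence of Theorem~\ref{feykac}, suitably upgraded.

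For the first bracket, Proposition~\ref{ovvio} gives $d\rho_N(u)=(D_N')^{-1}e^{-\int\chi_N(x)|u(x)|^4\,dx}\,dq(u)$, while $d\mu_N(u)=D_N^{-1}e^{-\int_{-N}^N|u(x)|^4\,dx}\,dq(u)$ with the normalisation conventions under which Theorem~\ref{feykac} yields $\mu_N\to\rho$. These two densities with respect to $q$ differ only through the collar $C_N:=[-N-D_N^3,N+D_N^3]\setminus[-N,N]$, of length $2D_N^3$, on which $\chi_N$ drops from $1$ to $0$. Since $t\mapsto e^{-t}$ is $1$-Lipschitz on $\R^+$ and $q$ is a stationary Gaussian process,
$$
\E_q\Big|\,e^{-\int\chi_N|u|^4}-e^{-\int_{-N}^N|u|^4}\,\Big|\;\le\;\E_q\Big(\int_{C_N}|u(x)|^4\,dx\Big)\;=\;2D_N^3\,\E_q\big(|u(0)|^4\big),
$$
and the same quantity controls $|D_N-D_N'|$. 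Since $D_N\to0$, this forces $D_N'/D_N\to1$, and a routine manipulation of the two normalised densities then gives $\|\rho_N-\mu_N\|_{\mathrm{TV}}\lesssim D_N^2\to0$, whence $|\E_{\rho_N}(F)-\E_{\mu_N}(F)|\le\|F\|_{L^\infty}\|\rho_N-\mu_N\|_{\mathrm{TV}}\to0$, uniformly over $\|F\|_{L^\infty}\le1$.

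For the second bracket, Theorem~\ref{feykac} (with $V=|x|^4$, whose limiting measure is $\rho$) gives $\E_{\mu_N}(G)\to\E_\rho(G)$ for every smooth bounded cylinder functional $G$, and I would upgrade this in two steps. First, for a fixed finite set $E=\{x_1<\dots<x_n\}$ the joint law of $(u(x_1),\dots,u(x_n))$ under $\mu_N$ has, for $N$ large, the explicit density on $\C^n$ written out in the proof of Lemma~\ref{lem-magicwithoutder}, built from the kernels $e^{-y\hat L}$ against the boundary states $\Omega_0$ (with the $e^{-2E(V)N}$ normalisation); by the proof of Theorem~\ref{feykac} it converges pointwise to the $P(\phi)_1$ marginal density of Definition~\ref{Pphi}, and as both are probability densities Scheff\'e's lemma promotes this to $L^1(\C^n)$ convergence, so that $\E_{\mu_N}\big(g(u(x_1),\dots,u(x_n))\big)\to\E_\rho\big(g(u(x_1),\dots,u(x_n))\big)$ for \emph{every} bounded measurable $g$ on $\C^n$. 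Second, I would reduce a general bounded measurable $F$ on $\mXe_\varphi$ to such finite dimensional functionals: $\mXe_\varphi$ being a separable Banach space, its Borel $\sigma$-algebra is generated by countably many bounded linear functionals, each of which is, on the almost sure support of the measures --- which by Lemmas~\ref{lem-magicwithoutder}--\ref{lem-magicwithder} consists of continuous, mildly growing paths --- a Borel--cylinder measurable function of the path; and the uniform moment bounds $\E_{\mu_N}(|u(x)|^r)\le\varphi_r(x)$, $\E_\rho(|u(x)|^r)\le\varphi_r(x)$ of Lemma~\ref{lem-magicwithoutder}, together with the uniform-in-$N$ concentration provided by the compact embedding $\mX_\varphi\hookrightarrow\mXe_\varphi$, allow $F$ to be approximated in $L^1$ by a cylinder functional with error small uniformly in $N$. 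Chaining the three approximations yields $\E_{\mu_N}(F)\to\E_\rho(F)$, and the proposition follows.

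The hardest step will be this last reduction. Theorem~\ref{feykac} only supplies smooth bounded test functions, whereas the statement demands arbitrary bounded \emph{measurable} $F$ on $\mXe_\varphi$; and, in contrast with \cite{burqtzv}, the limit $\rho$ is mutually singular with the reference process $q$, so there is no common dominating measure on which to run a soft density argument. What rescues the argument is that the convergence of the finite dimensional marginals is in fact in total variation (by Scheff\'e), so that \emph{local} measurable functionals are handled at once, and that the uniform-in-$N$ estimates of Subsection~\ref{lemmas} make the truncation to finitely many coordinates harmless uniformly in $N$ --- these two ingredients playing here the role that the $L^1(dq)$ convergence of the densities $F_N$ plays in the compact setting of \cite{burqtzv}.
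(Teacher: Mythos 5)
Your first reduction --- interposing $\mu_N$, bounding $|\E_{\rho_N}(F)-\E_{\mu_N}(F)|$ by a total-variation estimate coming from the Lipschitz bound on $e^{-t}$ and the fact that the densities $e^{-\int\chi_N|u|^4}$ and $e^{-\int_{-N}^N|u|^4}$ differ only on the collar of length $2D_N^3$ --- is essentially the paper's Lemma~\ref{lem-boundDN}, and it is correct: both you and the paper get a bound of order $D_N^2$ (or $D_N$), which tends to $0$. The Scheff\'e observation (pointwise convergence of the marginal densities, built from the $e^{-y\hat L}$ kernels against $\Omega_0$, upgrades to $L^1(\C^n)$ convergence of the finite-dimensional laws) is also in the spirit of the paper, which implicitly uses convergence on cylinder sets.

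Where the two proofs diverge --- and where your argument has a genuine gap --- is the passage from finite-dimensional convergence to convergence of $\E_{\mu_N}(F)$ for arbitrary bounded measurable $F$ on $\mXe_\varphi$. You appeal to separability of $\mXe_\varphi$, uniform moment bounds, and the compact embedding $\mX_\varphi\hookrightarrow\mXe_\varphi$ to claim that $F$ can be approximated in $L^1$ by a cylinder functional \emph{uniformly in $N$}. This does not work as stated: tightness gives weak compactness, i.e.\ control of \emph{continuous} test functions, but the proposition is about bounded \emph{measurable} test functions (setwise convergence), and there is no general mechanism by which moment bounds plus tightness turn convergence on cylinder sets into convergence on the whole $\sigma$-algebra. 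Moreover, point evaluations $u\mapsto u(x)$ are \emph{not} bounded linear functionals on the $L^2$-weighted space $\mXe_\varphi$, so the cylinder $\sigma$-algebra you want to reduce to is not naturally attached to the $\mXe_\varphi$-topology. The paper's proof circumvents exactly this by inserting a second intermediate space, $\{u\in\mathcal C(\R,\C):\an{x}^{-\nu}u\in L^\infty\}$: there, paths are continuous, evaluation is continuous, a closed ball $\overline B(u_0,R)$ is the countable intersection $\bigcap_{x\in\mathbb Q}\{u:|u(x)-u_0(x)|\le R\an{x}^\nu\}$ of cylinder sets, and the paper shows via Lemmas~\ref{lem-magicwithoutder}--\ref{lem-magicwithder} (together with a Sobolev/Kolmogorov continuity estimate for $dq$, and the $W^{\alpha,r}$ estimate for $\rho$) that $\mu_N$ and $\rho$ give full mass to this space. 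Only at the very end is the result transported to $\mXe_\varphi$ by observing that its topology is weaker, $\|u\|_{\mXe_\varphi}\lesssim\|\an{x}^{-\nu}u\|_{L^\infty}$. To repair your sketch you need to insert this intermediate weighted-$L^\infty$ step; the abstract ``separable Banach space + moment bounds + tightness'' reduction you propose instead is not sufficient.
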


\begin{lemma}\label{lem-boundDN} Let $F$ be a measurable function from $\mXe_\varphi$ to $\R$ such that either $F$ is bounded or there exists $x_0 \in \R$ and $r\geq 1$ satisfying $F(u) = |u(x_0)|^r$ or there exists $x_0$ and $y_0$ in $\R$, $r \geq2$ and $\alpha \in [0,\frac12[$ such that $\alpha \leq \frac{2}{r},1-\frac{3}{2r}$ satisfying $F(u) = \frac{|u(x_0)-u(y_0)|^r}{|x_0-y_0|^{\alpha r+1}}$. We have 
$$
|\E_{\rho_N}(F) - \E_{\mu_N}(F)|\leq C_F \frac{D_N}{1-D_N^2}
$$
where $C_F$ depends on $F$ in the case $F$ bounded and on $r$, or $r$ and $\alpha$, in the other cases but not on $N$.
\end{lemma}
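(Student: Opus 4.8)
The plan is to pass to the common Gaussian reference measure $dq$ and to rewrite the difference of the two expectations as a covariance under $\mu_N$. By Proposition~\ref{ovvio} and the definition of $\mu_N$, the measures $\rho_N$ and $\mu_N$ are mutually absolutely continuous with
$$
\frac{d\rho_N}{d\mu_N}(u)=\frac{D_N}{D'_N}\,e^{-R_N(u)},\qquad 0\le R_N(u)\le \tfrac12\int_{C_N}|u(x)|^4\,dx,
$$
where $R_N$ is the (nonnegative) difference of the two quartic potentials, which, since $\chi_N\equiv 1$ on $[-N,N]$, depends on $u$ only through its restriction to the \emph{collar} $C_N:=\{\,N\le|x|\le N+D_N^3\,\}$ — a set of Lebesgue measure $2D_N^3$. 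Since $\rho_N$ and $\mu_N$ are probability measures, $\tfrac{D_N}{D'_N}\,\E_{\mu_N}(e^{-R_N})=1$, and a one-line manipulation yields
$$
\E_{\rho_N}(F)-\E_{\mu_N}(F)=-\,\frac{\operatorname{Cov}_{\mu_N}\!\big(F,\,1-e^{-R_N}\big)}{\E_{\mu_N}(e^{-R_N})},
$$
with $\operatorname{Cov}_{\mu_N}(F,G)=\E_{\mu_N}(FG)-\E_{\mu_N}(F)\E_{\mu_N}(G)$. Throughout I will use the elementary domination $\E_{\mu_N}(G)\le D_N^{-1}\E_q(G)$ for $G\ge 0$ (immediate from $d\mu_N\le D_N^{-1}dq$), together with the fact that $q$ is a stationary complex Gaussian (oscillator) process, so that all of its joint moments $\E_q(|u(x_1)|^{a}|u(x_2)|^{b})$ are finite and bounded uniformly in $x_1,x_2$.

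For the denominator, I would use $e^{-R_N}\ge 1-R_N$ and integrate the pointwise bound for $R_N$ over the collar: $\E_{\mu_N}(R_N)\le \tfrac12\,D_N^{-1}\,|C_N|\,\sup_x\E_q(|u(x)|^4)\lesssim D_N^2$, whence $\E_{\mu_N}(e^{-R_N})\ge 1-D_N^2$ (after normalising the constant; for the small values of $N$ where this would fail, the conclusion of the lemma is immediate since $D_N/(1-D_N^2)$ is large there).

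For the numerator, I would estimate the two pieces of the covariance separately, using $0\le 1-e^{-R_N}\le R_N\le \tfrac12\int_{C_N}|u(x)|^4\,dx$. The piece $|\E_{\mu_N}(F)|\,\E_{\mu_N}(1-e^{-R_N})$ is $\le |\E_{\mu_N}(F)|\,\E_{\mu_N}(R_N)\lesssim |\E_{\mu_N}(F)|\,D_N^2$; since $|\E_{\mu_N}(F)|\le\|F\|_\infty$ when $F$ is bounded and $|\E_{\mu_N}(F)|\lesssim D_N^{-1}$ in the two polynomial cases by Lemmas~\ref{lem-magicwithoutder}--\ref{lem-magicwithder}, this contributes $\lesssim C_F D_N$. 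The remaining piece is
$$
\E_{\mu_N}\!\big(|F|(1-e^{-R_N})\big)\le \tfrac12\int_{C_N}\E_{\mu_N}\!\big(|F(u)|\,|u(x)|^4\big)\,dx\le \tfrac12\,D_N^{-1}\int_{C_N}\E_q\!\big(|F(u)|\,|u(x)|^4\big)\,dx,
$$
and in each of the three cases the integrand $\E_q(|F(u)|\,|u(x)|^4)$ is a finite constant $C_F$ independent of $x$ and of $N$: for bounded $F$ this is clear; for $F(u)=|u(x_0)|^r$ it follows from Cauchy--Schwarz and the Gaussian moment bounds for $q$; and for $F(u)=|x_0-y_0|^{-\alpha r-1}|u(x_0)-u(y_0)|^r$ it again follows from Cauchy--Schwarz, the hypotheses $r\ge 2$, $\alpha<\tfrac12$, $0\le\alpha\le\min(\tfrac2r,1-\tfrac3{2r})$ being exactly what makes the relevant moments of the increment $u(x_0)-u(y_0)$ integrable (as in the proofs of Lemmas~\ref{lem-magicwithoutder}--\ref{lem-magicwithder}). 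Hence this piece is $\le \tfrac12 D_N^{-1}|C_N|\,C_F\lesssim C_F D_N^2\le C_F D_N$. Combining the two pieces with the lower bound on the denominator gives $|\E_{\rho_N}(F)-\E_{\mu_N}(F)|\le C_F D_N/(1-D_N^2)$, with $C_F$ independent of $N$.

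The heart of the matter — and the only step that is more than bookkeeping — is the cross term $\E_{\mu_N}(|F|R_N)$: because $\mu_N$ carries no damping outside $[-N,N]$, the moments of $u$ on the collar are controlled only up to a factor $D_N^{-1}$, and the point is precisely that the discrepancy $R_N$ is supported by the values of $u$ on $C_N$, whose width $D_N^3$ — built into the choice \eqref{defchiN} of $\chi_N$ — is small enough that $D_N^{-1}\cdot|C_N|\lesssim D_N^2$ still wins. Everything else (the covariance identity, the domination $\E_{\mu_N}\le D_N^{-1}\E_q$, and the Gaussian moment estimates for $q$) is routine, and keeping the constant $C_F$ uniform in $N$ is guaranteed by Lemmas~\ref{lem-magicwithoutder}--\ref{lem-magicwithder}.
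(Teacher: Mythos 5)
Your argument is the paper's proof in a slightly repackaged form: both express $\rho_N$ and $\mu_N$ as densities against $dq$ (equivalently, compute $d\rho_N/d\mu_N = (D_N/D_N')e^{-R_N}$), exploit that the discrepancy $\chi_N - 1_{[-N,N]}$ lives on a collar of width $2D_N^3$, and control it via the uniform Gaussian moment estimate $\E_q(|F(u)|\,|u(x)|^4)\le C_F$ supplied by the same computations as in Lemmas~\ref{lem-magicwithoutder}--\ref{lem-magicwithder}. Your covariance identity and the Jensen bound $\E_{\mu_N}(e^{-R_N})\ge 1-\E_{\mu_N}(R_N)$ are just a tidy reorganization of the paper's two-term splitting and its lower bound $D_N' \ge D_N - D_N^3$, so the argument coincides modulo bookkeeping.
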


\begin{proof} Let $g_N(u) = e^{-\frac12\int\chi_N(x) |u(x)|^4dx}$, $h_N(x) = e^{-\frac12\int_{-N}^N |u(x)|^4dx}$ and set $D_N' = \int g_N(u) dq(u)$. We recall that $D_N = \int h_N(u )dq(u)$.

Given Proposition \ref{ovvio}, we have 
$$
\E_{\rho_N}(F) = \int F(u) \frac{g_N(u)}{D'_N} dq(u) .
$$
By definition, we have 
$$
\E_{\mu_N}(F) = \int F(u) \frac{h_N(u)}{D_N} dq(u) .
$$

Let us prove that
$$
\int |F(u)|\,|g_N(u) - h_N(u)|dq(u) \leq C D_N^3.
$$
Indeed, as $\chi_N(x) \in [0,1]$ and $\chi_N(x) = 1$ on $[-N,N]$, we have $g_N(u) \leq h_N(u)$ and thus 
$$
|g_N(u) - h_N(u)| \leq h_N(u) \frac12\int |\chi_N(x) - 1_{[-N,N]}(x)| |u(x)|^4dx.
$$
Integrating with respect to $dq(u)$ and bounding $h_N(u)$ by $1$ yields in the case that $F(u)$ is bounded
$$
\int |F(u)|\,|g_N(u) - h_N(u)|dq(u) \leq C_F\int |\chi_N(x) - 1_{[-N,N]}(x)| \Big( \int |u(x)|^4 dq(u) \Big)dx.
$$

In the cases $F(u) = |u(x_0)|^r$ or $F(u) = \frac{|u(x_0)-u(y_0)|^r}{|x_0-y_0|^{1+\alpha r}}$, we get
$$
\int |F(u)|\,|g_N(u) - h_N(u)|dq(u) \leq \int |\chi_N(x) - 1_{[-N,N]}(x)| \Big( \int |F(u)| \, |u(x)|^4 dq(u) \Big)dx.
$$

Since $dq$ is up to a constant the Gaussian law induced by the random variable 
$$
\int \frac{e^{inx}}{\sqrt{1+n^2}} dW(n)
$$
where $W(n)$ is the reunion of two independent complex Brownian motions, we get
$$
\int |u(x)|^4 dq(u) \leq C \Big( \int |u(x)|^2 dq(u) \Big)^2
$$
where $C$ is a universal constant related to Gaussian variables. We have $\int |u(x)|^2 dq(u) = \int \frac{dn}{1+n^2}= \pi$, thus
$$
\int |u(x)|^4 dq(u) \leq C
$$
where $C$ does not depend on $x$. 

We use the proof of Lemma \ref{lem-magicwithoutder} in the case $F(u) = |u(x_0)|^r$ and the proof of Lemma \ref{lem-magicwithder} in the case $F(u) = \frac{|u(x_0)-u(y_0)|^r}{|x_0-y_0|^{1+\alpha r}}$ to get that
$$
 \int |F(u)|\, |u(x)|^4 dq(u) 
 $$
 is finite, depends on $r$ or $r$ and $\alpha$ but is independent from $x$ and $x_0$ or $x$, $x_0$, and $y_0$.

We get
$$
\int |F(u)|\, |g_N(u) - h_N(u)|dq(u) \leq C_F\frac12\int |\chi_N(x) - 1_{[-N,N]}(x)|dx .
$$
By definition of $\chi_N$, it is equal to $1$ on $[-N,N]$, to $0$ outside $[-N-D_N^3,N+D_N^3]$ and belongs to $[0,1]$, hence 
$$
\frac12 \int |\chi_N(x) - 1_{[-N,N]}(x)| \leq D_N^3
$$
and 
$$
\int |F(u)|\, |g_N(u) - h_N(u)|dq(u) \leq C_F D_N^3.
$$

We deduce from that $|D_N' - D_N| \leq D_N^3$ for $F = 1$ and thus $D_N' \geq D_N-D_N^3$.

We have 
$$
|\E_{\rho_N}(F) - \E_{\mu_N}(F)| \leq \int |F(u)| \frac{|g_N(u)-h_N(u)|}{D_N'} dq(u) + \int |F(u)| h_N(u) \frac{|D_N - D_N'|}{D_ND_N'}.
$$
Given the previous estimates, we have 
$$
|\E_{\rho_N}(F) - \E_{\mu_N}(F)| \leq C_F \frac{D_N^3}{D_N'} + \int h_N(u)|F(u)| dq(u) \frac{D_N^3}{D_ND_N'}.
$$
We get by bounding $h_n$ by $1$
$$
|\E_{\rho_N}(F) - \E_{\mu_N}(F)| \leq C_F \frac{D_N^2}{D_N'} \leq C_F \frac{D_N}{1-D_N^2}
$$
which concludes the proof.\end{proof}

\begin{proof}[Proof of Proposition \ref{prop-convmeas}.] As $\mu_N $ converges in law towards $\rho$ for cylindrical sets, we get that $\mu_N$ converges towards $\rho$ for the topological $\sigma$ algebra of continuous functions $u$ such that $\an{x}^{-\nu}u$ belongs to $L^\infty$ for any $\nu >0$.

Indeed, let $\overline B(u_0, R)$ the closed ball of centre $u_0$ and radius $R$ in this space, we have 
$$
\overline B(u_0,R) = \{ u \in \mathcal C(\R,\C) \, | \, \forall x\in \R , |u(x) - u_0(x)|\leq R \an{x}^\nu\}.
$$
As the $u$ are continuous, we can restrict $x$ to $\mathbb Q$ and get
$$
\overline{B}(u_0,R) = \bigcap_{x\in \mathbb Q} \{u \in \mathcal C(\R,\C) \, | \,  |u(x) - u_0(x)|\leq R \an{x}^\nu\}.
$$
As $\{u \in \mathcal C(\R,\C) \, | \,  |u(x) - u_0(x)|\leq R \an{x}^\nu\}$ is a cylindrical set, we get the convergence of $\mu_N$ towards $\rho$ for the balls of continuous functions in the norm $\|\an{x}^{-\nu}\cdot\|_{L^\infty}$ and as these balls generates the topological $\sigma $ algebra we get the convergence of $\mu_N$ towards $\rho$.

For this to be significant, we prove that 
$$
\rho(\{ u \in \mathcal C(\R,\C) \, |\, \|\an{x}^{-\nu}u\|_{L^\infty}<\infty\}) = 1
$$
and
$$
\mu_N(\{ u \in \mathcal C(\R,\C) \, |\, \|\an{x}^{-\nu}u\|_{L^\infty}<\infty\}) = 1.
$$
Indeed, if we do not have these properties then the convergence in $\an{x}^\nu L^\infty$ is only true on the set 
$$
\{ u \in \mathcal C(\R,\C) \, |\, \|\an{x}^{-\nu}u\|_{L^\infty}<\infty\}
$$ 
which has not a full $\rho$ or $\mu_N$ measure and hence one cannot have the convergence in law.

Given that the $\mu_N$ are absolutely continuous with respect to $q$, it is enough to prove that $u$ is $dq$ almost surely continuous and such that $\an{x}^{-\nu}u$ belongs to $L^\infty$. 

Let $\alpha \in [0,\frac12[$ and $p\in ]1,\infty[$ such that $\frac1{p} < \min (\nu, \frac12 -\alpha)$, we have by Sobolev inequality, for $\frac1{p}<s<\frac12 -\alpha$,
$$
\|\an{x}^{-\nu}D^\alpha u\|_{L^p_q L^\infty(\R)} \leq C \|D^s\an{x}^{-\nu}D^\alpha u\|_{L^p_q L^p(\R)}.
$$
As differentiating $\an{x}^{-\nu}$ only gains in powers of $x$ and since we can reverse the order of integration we get,
$$
\|\an{x}^{-\nu}D^\alpha u\|_{L^p_q( L^\infty(\R)} \leq C \|\an{x}^{-\nu}D^{s+\alpha} u\|_{ L^p(\R, L^p_q)}.
$$
Since $s+\alpha < \frac12$ and $dq$ is a Gaussian, we have 
$$
\|D^{s+\alpha}u(x)\|_{L^p_q} \leq C_p \|D^{s+\alpha}u(x)\|_{L^2(dq)} \leq C_p \Big( \int \frac{dn}{(1+n^2)^{1-(s+\alpha)}} \Big)^{1/2}<\infty.
$$
And since $\an{x}^{-\nu}$ belongs to $L^p$, we get
$$
\|\an{x}^{-\nu}D^\alpha u\|_{L^p_q L^\infty(\R)} < \infty
$$
which yields that $ \|\an{x}^{-\nu}D^\alpha u\|_{L^\infty(\R)}$ is $\mu_N$ almost surely finite and hence $\an{x}^{-\nu} u$ belongs $\mu_N$ almost surely to $W^{\alpha, \infty}$ which ensures that $u$ is $\mu_N$ almost surely continuous and that $\an{x}^{-\nu} u$ belongs $\mu_N$ almost surely to $L^\infty$.

For $\rho$, we use Theorem 6.9 in \cite{SIMfunint} to get that for $r>4$ and $\alpha \in ]\frac1{r},\frac2{r}[$ (the couple $(r,\alpha)$ satisfies the assumption in Lemma \ref{lem-magicwithder}), we have that
$$
\E_{\rho}\Big( \frac{|u(x)-u(y)|^r}{|x-y|^{1+\alpha r}}\Big)
$$
is bounded uniformly in $x$ and $y$. Hence for $\nu > \frac1{r}$ we get that
$$
\E_{\rho}  \Big(\int dx \int dy \an{x}^{-\nu r} \an{y}^{-\nu r} \frac{|u(x)-u(y)|^r}{|x-y|^{1+\alpha r}} \Big)
$$
is finite and hence $\an{x}^{-\nu} u$ belongs $\rho$ almost surely to $W^{\alpha, r}$ which ensures that $u$ is $\rho$ almost surely continuous and that $\an{x}^{-\nu} u$ belongs $\rho$ almost surely to $L^\infty$.

The topology of $\mXe_\varphi$ is weaker that the topology of $\an{x}^\nu L^\infty$. Indeed,
$$
\|u\|_{\mXe_\varphi} = \|(1+\varphi)^{-1}\an{x}^{-6(1+\varepsilon)}D^{\sigma(1+\varepsilon)} u\|_{L^2} \leq \|D^{-\sigma(1+\varepsilon)}\an{x}^{-6(1+\varepsilon)}D^{\sigma(1+\varepsilon)} u\|_{L^2}.
$$
We recall that $\sigma < 0$ and that differentiating $\an{x}^{-6(1+\varepsilon)}$ only gains in powers of $x$ thus
$$
\|u\|_{\mXe_\varphi}\leq C \|\an{x}^{-6(1+\varepsilon)} u\|_{L^2}
$$
and by H\"older inequality
$$
\|u\|_{\mXe_\varphi}\leq C \|\an{x}^{-\nu} u\|_{L^\infty}.
$$

Thus, we get that $\mu_N$ converges towards $\rho$ in law for the topological $\sigma$ algebra of $\mXe_\varphi$. This implies that for all $F$ measurable, bounded from $\mXe_\varphi$ to $\R$, we have that $\E_{\mu_N}(F)$ converges towards $\E_\rho(F)$. Given the lemma, we get that $\E_{\rho_N}(F)$ converges towards $\E_\rho(F)$ which implies the result. \end{proof}

We deduce from Lemmas \ref{lem-magicwithoutder}, \ref{lem-magicwithder} and \ref{lem-boundDN} the following lemma.

\begin{lemma}\label{lem-theoneforfed} Let $r\geq 2$ and $\alpha \in [0,\frac12[$ such that $\alpha r \leq 2,1- \frac3{2r}$, there exists $\varphi_r$ and $\varphi_{r,\alpha}$ two non negative increasing functions such that for all $N\in \N$ and all $x,y\in \R$, $|x|\geq |y|$ we have
\begin{equation}\label{phi1}
\E_{\rho_N} (|u(x)|^r) \leq \varphi_r(x)
\end{equation}
and 
\begin{equation}\label{phi2}
\E_{\rho_N}\Big( \frac{|u(x)-u(y)|^r}{|x-y|^{1+\alpha r}} \Big) \leq \varphi_{r,\alpha}(x) .
\end{equation}
\end{lemma}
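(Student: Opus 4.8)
My plan is to splice together the three cited lemmas; I do not expect a genuine obstacle, the only points requiring care being the uniform-in-$N$ control of the error term produced by Lemma \ref{lem-boundDN} and the check that the resulting majorants remain non-negative and increasing.

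First I would fix $r\geq 2$ and $\alpha\in[0,\frac12[$ subject to the stated constraints, noting that these are exactly the hypotheses making Lemma \ref{lem-magicwithoutder} applicable with $F(u)=|u(x)|^r$, Lemma \ref{lem-magicwithder} applicable with $F(u)=|u(x)-u(y)|^r/|x-y|^{1+\alpha r}$, and the second and third cases of Lemma \ref{lem-boundDN} applicable with the same choices of $F$. For \eqref{phi1}, Lemma \ref{lem-boundDN} in its second case gives
$$
\bigl|\E_{\rho_N}(|u(x)|^r)-\E_{\mu_N}(|u(x)|^r)\bigr|\leq C_r\,\frac{D_N}{1-D_N^2},
$$
while Lemma \ref{lem-magicwithoutder} gives $\E_{\mu_N}(|u(x)|^r)\leq \varphi_r(x)$. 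Since $D_N<1$ for every $N$ and $D_N\to 0$ as $N\to\infty$, the sequence $\bigl(D_N/(1-D_N^2)\bigr)_N$ converges to $0$ and hence is bounded by some finite constant $K$ depending only on the family $(D_N)_N$; thus $\E_{\rho_N}(|u(x)|^r)\leq \varphi_r(x)+KC_r$, and the right-hand side is again a non-negative increasing function of $x$, which I would relabel $\varphi_r$.

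The estimate \eqref{phi2} would be obtained identically: the third case of Lemma \ref{lem-boundDN} with $F(u)=|u(x)-u(y)|^r/|x-y|^{1+\alpha r}$ compares $\E_{\rho_N}$ with $\E_{\mu_N}$ up to $C_{r,\alpha}D_N/(1-D_N^2)$, Lemma \ref{lem-magicwithder} bounds $\E_{\mu_N}$ by $\varphi_{r,\alpha}(x)$ (using $|x|\geq|y|$, so that $\max(|x|,|y|)=|x|$, and that $\varphi_{r,\alpha}$ is even), and one absorbs the extra $KC_{r,\alpha}$ into the majorant. The single piece of bookkeeping worth spelling out is that each $D_N$ is strictly less than $1$, so $1-D_N^2$ never vanishes; this holds because $e^{-\frac12\int_{-N}^N|u(x)|^4dx}<1$ on a set of positive $dq$-measure, so that $D_N=\int e^{-\frac12\int_{-N}^N|u(x)|^4dx}dq(u)<1$. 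With that in hand the proof is complete.
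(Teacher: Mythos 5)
Your proposal is correct and follows exactly the route the paper intends: the paper offers no separate proof of Lemma \ref{lem-theoneforfed}, stating only that it is deduced from Lemmas \ref{lem-magicwithoutder}, \ref{lem-magicwithder} and \ref{lem-boundDN}, and your splicing of those three with the uniform bound on $D_N/(1-D_N^2)$ (using $D_N\to 0$ and $D_N<1$, the latter holding for $N\geq 1$) is precisely the intended argument.
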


\subsection{Tightness of \texorpdfstring{$\nu_N$}{nuN}}\label{subsec-tightness}\label{tightsec}

This section is devoted to prove the tightness of the family of measures $(\nu_N)_N$. In order to do this, we need to begin by proving some preliminary technical results.  We begin with the following compactness argument.

\begin{proposition}\label{compact}
Let $R_0>0$. The set $K=\{u:\|u\|_{\mathcal{X}_\varphi}\leq R_0\}$ is compact in $\mathcal{X}^\varepsilon_\varphi$.
\end{proposition}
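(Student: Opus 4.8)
The plan is to prove compactness of $K = \{u : \|u\|_{\mX_\varphi} \leq R_0\}$ in $\mXe_\varphi$ by exhibiting it as a closed subset of a compact set, the compact set coming from a Rellich--Kondrachov type argument combined with a weight-decay argument. Recall that
$$
\|f\|_{\mX_\varphi} = \|(1+\varphi)^{-1}\an{x}^{-6}D_x^\sigma f\|_{L^2}, \qquad \|f\|_{\mXe_\varphi} = \|(1+\varphi)^{-1}\an{x}^{-6(1+\varepsilon)}D_x^{\sigma(1+\varepsilon)} f\|_{L^2},
$$
so if I set $g = (1+\varphi)^{-1}\an{x}^{-6}D_x^\sigma f$, membership of $f$ in $K$ says exactly $\|g\|_{L^2}\leq R_0$, while $\|f\|_{\mXe_\varphi} = \|\an{x}^{-6\varepsilon}D_x^{\sigma\varepsilon} g\|_{L^2}$ (after commuting weights and multipliers, which is harmless up to lower-order terms since $\sigma<0$ and differentiating $\an{x}^{-6}$ only produces extra negative powers of $\an{x}$, as the paper repeatedly uses). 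Thus the claim reduces to: the image of the $L^2$-ball of radius $R_0$ under the map $g \mapsto \an{x}^{-6\varepsilon}D_x^{\sigma\varepsilon}g$ has compact closure in $L^2$, and the composite map back to $\mXe_\varphi$ is a homeomorphism onto its image so that closedness of $K$ transfers.

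The key steps, in order, are as follows. First, I would make the reduction above precise: define the linear isometry-like change of variables and verify that $\|u\|_{\mX_\varphi}\leq R_0 \iff \|g\|_{L^2}\leq R_0$ and that $\|u\|_{\mXe_\varphi} \lesssim \|\an{x}^{-6\varepsilon}D_x^{\sigma\varepsilon}g\|_{L^2}$ (and conversely, up to constants), handling the commutators between $\an{x}^{-6}$, $\an{x}^{-6(1+\varepsilon)}$ and the Fourier multipliers $D_x^\sigma$, $D_x^{\sigma(1+\varepsilon)}$ by the standard fact that these commutators gain decay/smoothing and so are bounded. Second, I would prove the compactness of the operator $T : g\mapsto \an{x}^{-6\varepsilon}D_x^{\sigma\varepsilon}g$ on $L^2(\R)$: this is a classical fact, since $T = M_{\an{x}^{-6\varepsilon}} \circ m(D)$ with $m(\xi) = \an{\xi}^{\sigma\varepsilon}$ decaying at infinity (as $\sigma<0$) and $\an{x}^{-6\varepsilon}$ decaying at infinity, so $T$ is of the form "decaying multiplication composed with decaying Fourier multiplier", which is compact on $L^2$ (approximate each factor by a compactly supported cutoff to reduce to Hilbert--Schmidt operators, or invoke the standard criterion). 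Hence $T(\overline{B_{L^2}(0,R_0)})$ is precompact in $L^2$, i.e. the image of $K$ in $\mXe_\varphi$ is precompact. Third, I would check that $K$ is closed in $\mXe_\varphi$: if $u_n\in K$ converges to $u$ in $\mXe_\varphi$, then the corresponding $g_n$ (bounded in $L^2$ by $R_0$) have a weak-$*$ limit $\tilde g$ with $\|\tilde g\|_{L^2}\leq R_0$, and one identifies $\tilde g$ with the "$g$" attached to the limit $u$ using that convergence in $\mXe_\varphi$ forces convergence in the sense of distributions; therefore $u\in K$. A closed subset of a precompact set in a metric space is compact, which finishes the proof.

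I expect the main obstacle to be the second step done cleanly: establishing that $g\mapsto \an{x}^{-6\varepsilon}D_x^{\sigma\varepsilon}g$ is a \emph{compact} operator on $L^2$, and not merely bounded. The cleanest route is to write it as a norm-limit of Hilbert--Schmidt operators: replace $\an{x}^{-6\varepsilon}$ by $\an{x}^{-6\varepsilon}\mathbf{1}_{|x|\leq R}$ and $\an{\xi}^{\sigma\varepsilon}$ by $\an{\xi}^{\sigma\varepsilon}\mathbf{1}_{|\xi|\leq R}$; the truncated operator has integral kernel $\an{x}^{-6\varepsilon}\mathbf{1}_{|x|\leq R}\,\widehat{(\an{\cdot}^{\sigma\varepsilon}\mathbf{1}_{|\cdot|\leq R})}(x-y)$, which is square-integrable in $(x,y)$, hence Hilbert--Schmidt and a fortiori compact; and the truncations converge in operator norm because both $\an{x}^{-6\varepsilon}$ and $\an{\xi}^{\sigma\varepsilon}$ are in $L^2 + (\text{small }L^\infty)$, being genuinely decaying functions. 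Compactness passes to norm-limits, giving the claim. A subsidiary nuisance is the bookkeeping of the weight $(1+\varphi)$: since the \emph{same} factor $(1+\varphi)^{-1}$ appears in both $\mX_\varphi$ and $\mXe_\varphi$, it simply cancels in the reduction and plays no role, so it can be ignored throughout once the change of variables is set up correctly.
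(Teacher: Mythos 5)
Your proposal takes a genuinely different route from the paper: you try to exhibit $K$ as the image of an $L^2$-ball under a compact operator, proved compact by Hilbert--Schmidt approximation, whereas the paper's proof is a direct total-boundedness (finite $\varepsilon$-net) argument, decomposing $u$ with a smooth spatial cutoff $1_{|x|\le R}$ and a Littlewood--Paley-type frequency cutoff $\Pi_N$ and using the extra weight $\an{x}^{-6\varepsilon}$ and extra smoothing $D^{\sigma\varepsilon}$ of $\mXe_\varphi$ to make the tail and high-frequency parts small, leaving a finite-dimensional piece. Your third step, checking that $K$ is closed in $\mXe_\varphi$, is a genuinely worthwhile addition that the paper's proof passes over silently.

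There is, however, a concrete gap in your reduction step that undermines the rest. You claim that after the substitution $g=(1+\varphi)^{-1}\an{x}^{-6}D_x^\sigma f$ one has $\|f\|_{\mXe_\varphi}=\|\an{x}^{-6\varepsilon}D_x^{\sigma\varepsilon}g\|_{L^2}$, and you later assert that since "$(1+\varphi)^{-1}$ appears in both norms, it simply cancels in the reduction and plays no role". This is not correct: writing $f = D_x^{-\sigma}\an{x}^6(1+\varphi)g$ and substituting gives
\begin{equation*}
\|f\|_{\mXe_\varphi} = \bigl\| M_{(1+\varphi)^{-1}\an{x}^{-6(1+\varepsilon)}} \, D_x^{\sigma\varepsilon}\, M_{\an{x}^6(1+\varphi)}\, g\bigr\|_{L^2},
\end{equation*}
and the two $\varphi$-weights are separated by the non-local Fourier multiplier $D_x^{\sigma\varepsilon}$, so they do not cancel. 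Your commutator remark covers the polynomial weight $\an{x}^{-6}$ (differentiating it gains decay), but says nothing about $(1+\varphi)^{-1}$, whose derivatives are completely uncontrolled: at this point in the paper $\varphi$ is merely assumed non-negative and increasing, so $\varphi'$ can be arbitrary. Concretely, the integral kernel of the operator $A$ above has the schematic form $(1+\varphi(x))^{-1}(1+\varphi(y))\cdot\an{x}^{-6(1+\varepsilon)}\an{y}^{6}\cdot K_{\sigma\varepsilon}(x-y)$ with $K_{\sigma\varepsilon}$ the Bessel kernel of exponential off-diagonal decay; if $\varphi$ grows faster than that decay the ratio $(1+\varphi(y))/(1+\varphi(x))$ ruins integrability, so $A$ is not a priori bounded on $L^2$, let alone compact. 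Your step 2 (compactness of $T=M_{\an{x}^{-6\varepsilon}}\circ D_x^{\sigma\varepsilon}$) is a correct and clean classical fact, but it concerns the wrong operator: the operator you actually need compact is $A$, with the $\varphi$-weights still present. To repair this you would need either (i) to impose an explicit growth restriction on $\varphi$ making $(1+\varphi(y))/(1+\varphi(x))$ harmless against the Bessel decay (which is morally consistent with the $\varphi$'s the paper eventually chooses in Proposition \ref{propest}, but is not part of the hypotheses here), or (ii) to carry out the Hilbert--Schmidt truncation directly on $A$, explaining quantitatively why the weight ratio and the kernel decay balance out. As written, the step "it simply cancels" is a genuine hole, not a bookkeeping nuisance.
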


\begin{proof}
We show that for every $\varepsilon>0$ there exists $n_\varepsilon$ and $u_1,\dots u_{n_\varepsilon}$ such that
$$
K\subset \bigcup_{j=1}^{n_\varepsilon}B(u_j,\varepsilon)
$$
where $B$ are the balls in the $\mathcal{X}^\varepsilon_\varphi$ topology. To do that, we introduce a smooth cut-off function $1_{|x|\leq R}$ such that $1_{|x|\leq R}(x)=1$ for $x\in[-R,R]$ and $1_{|x|\leq R}(x)=0$ for $x\in(-\infty,-2R)\cup(2R,+\infty)$. We then have, for any $u\in K$, 
\begin{equation*}
\|u\|_{\mathcal{X}^\varepsilon_\varphi}\leq
I+II
\end{equation*}
where
$$
I=\|(1+\varphi)^{-1}\langle x\rangle^{-6(1+\varepsilon)}D^{\sigma(1+\varepsilon)}(1-1_{|x|\leq R})u\|_{L^2}
$$
and
$$
II=\|(1+\varphi)^{-1}\langle x\rangle^{-6(1+\varepsilon)}D^{\sigma(1+\varepsilon)}1_{|x|\leq R}u\|_{L^2}
$$

The first term is easily bounded as follows
\begin{equation}\label{I}
I\leq C R^{-\varepsilon}\|(1+\varphi)^{-1}\langle x\rangle^{-6}D^{\sigma(1+\varepsilon)} u\|_{L^2}\leq CR^{-\varepsilon} R_0.
\end{equation}
To estimate the second term, we need to introduce also a frequency cut-off $\Pi_N$
$$
\widehat{\Pi_N f}(n) = \eta \Big(\frac{n}{N}\Big) \hat f (n)
$$
with $\eta$ a non negative even $\mathcal C^\infty $ function with compact support included in $[-1,1]$ and such that $\eta = 1$ on $[-1/2,1/2]$ and $N>0$. We thus rewrite
\begin{equation*}
II=II_A+II_B
\end{equation*}
where
$$
II_A=\|(1+\varphi)^{-1}\langle x\rangle^{-6(1+\varepsilon)}D^{\sigma(1+\varepsilon)}(1-\Pi_N)1_{|x|\leq R}u\|_{L^2}
$$
and
$$
II_B=\|(1+\varphi)^{-1}\langle x\rangle^{-6(1+\varepsilon)}D^{\sigma(1+\varepsilon)}\Pi_N1_{|x|\leq R}u\|_{L^2}
$$

To estimate $II_A$ we use the fact that $\Pi_N$ cuts off high frequencies, yielding
\begin{eqnarray}\label{II}
II_A&\leq& C_R N^{-\varepsilon}\|(1+\varphi)^{-1}\langle x\rangle^{-6}D^{\sigma}1_{|x|\leq R}u\|_{L^2}
\\
\nonumber
&\leq&
C_R N^{-\varepsilon} R_0.
\end{eqnarray}
Finally, to estimate $II_B$ we use that $\Pi_N1_{|x|\leq R}u$ is finite dimensional, and therefore for every $\varepsilon>0$ there exist $n_\varepsilon$ and $u_1,\dots u_{n_\varepsilon}\in \Pi_N1_{|x|\leq R}K$ such that
\begin{equation}\label{III}
\displaystyle
\Pi_N 1_{|x|\leq R}K\subset \bigcup_{j=1}^{n_\varepsilon}B(u_j,\varepsilon/3).
\end{equation}
We are now ready to conclude: for a fixed $\varepsilon>0$, we can choose $R$ in \eqref{I} big enough such that $I\leq \varepsilon/3$ and, for fixed $\varepsilon$ and $R$, we can choose $N$ in \eqref{II} big enough such that $II_A\leq \varepsilon/3$. Therefore, taking any $u\in K$, we can conclude that there exist $j\in\{1,\dots n_\varepsilon\}$ such that  taking the corresponding $u_j\in \mathcal{X}_\varepsilon$ in $II_B$ gives
$$
\|u-u_j\|_{\mathcal{X}^\varepsilon_\varphi}\leq \frac23\varepsilon+ \|\Pi_N1_{|x|\leq R}u-u_j\|_{\mathcal{X}^\varepsilon_\varphi}\leq \varepsilon
$$
and thus the proof is concluded.
\end{proof}

As a consequence, we have the following
\begin{corollary}\label{compeb}
For every $\varepsilon>0$ the embedding $\mathcal{X}_\varphi\subset \mathcal{X}^\varepsilon_\varphi$ is compact.
\end{corollary}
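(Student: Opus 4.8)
The plan is to read off the corollary directly from Proposition \ref{compact}, since a linear embedding $\iota : \mathcal{X}_\varphi \to \mathcal{X}^\varepsilon_\varphi$ is compact, by definition, exactly when the image $\iota(B)$ of the closed unit ball $B = \{u : \|u\|_{\mathcal{X}_\varphi} \leq 1\}$ is relatively compact in $\mathcal{X}^\varepsilon_\varphi$. So the whole statement reduces to the compactness of one ball of $\mathcal{X}_\varphi$ inside $\mathcal{X}^\varepsilon_\varphi$, which is precisely what Proposition \ref{compact} provides with $R_0 = 1$.

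First I would check that the inclusion $\mathcal{X}_\varphi \subset \mathcal{X}^\varepsilon_\varphi$ is a well-defined bounded linear map, so that it makes sense to speak of its compactness. Writing $D^{\sigma(1+\varepsilon)} = D^{\sigma\varepsilon}D^{\sigma}$ with $\sigma\varepsilon < 0$, the operator $D^{\sigma\varepsilon}$ is bounded on $L^2$, and $\an{x}^{-6(1+\varepsilon)} \leq \an{x}^{-6}$; moving the weight past $D^{\sigma\varepsilon}$ only produces extra negative powers of $\an{x}$, so that $\|u\|_{\mathcal{X}^\varepsilon_\varphi} \lesssim \|u\|_{\mathcal{X}_\varphi}$. (This continuity is in any case already implicit in estimate \eqref{I} of the proof of Proposition \ref{compact}.) In particular $B \subset \mathcal{X}^\varepsilon_\varphi$, so the statement is meaningful.

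Then I would simply apply Proposition \ref{compact} with $R_0 = 1$: the set $K = \{u : \|u\|_{\mathcal{X}_\varphi} \leq 1\} = \iota(B)$ is compact, hence relatively compact, in $\mathcal{X}^\varepsilon_\varphi$, which is exactly the assertion that $\iota$ is a compact operator; by homogeneity the same holds for balls of any radius $R_0 > 0$. There is essentially no obstacle here, since all the genuine work — the spatial cut-off bound \eqref{I}, the high-frequency truncation bound \eqref{II}, and the finite-dimensional total boundedness \eqref{III} — was already carried out in Proposition \ref{compact}. The only minor point requiring care is the preliminary verification that the embedding is a bounded linear map before invoking its compactness, which is the short argument above.
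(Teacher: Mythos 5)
Your argument is exactly how the paper intends the corollary to follow: the paper gives no separate proof, treating it as an immediate consequence of Proposition \ref{compact} since compactness of the embedding means precisely that the unit ball of $\mathcal{X}_\varphi$ is (relatively) compact in $\mathcal{X}^\varepsilon_\varphi$. Your preliminary check that the inclusion is a bounded linear map is a sensible addition and correct; nothing else needs saying.
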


Another crucial tool is represented by the following uniform estimates.

\begin{proposition}\label{propest}
Let $0<\alpha\leq 1/2$. Then there exists a non negative increasing function $\varphi(x)$ such that
\begin{equation}\label{crucialest}
\|u\|_{L^2_{\nu_N},\mathcal{X}_{T,\varphi}}
\end{equation}
is uniformly bounded in $N$.

\end{proposition}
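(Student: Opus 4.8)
The plan is to reduce the statement to moment bounds on the flow $\psi_N$ — at a fixed time and for its time increments — and then feed these into Lemmas \ref{lem-magicwithoutder}, \ref{lem-magicwithder} and \ref{lem-theoneforfed}, using crucially that $\rho_N$ is invariant under $\psi_N$. Since $\nu_N$ is the image of $\rho_N$ under $v\mapsto(t\mapsto\psi_N(t)v)$ and $\rho_N$ is a probability measure, for every $p\geq 2$ one has $\|u\|_{L^2_{\nu_N},\mX_{T,\varphi}}\leq\big(\E_{\rho_N}\|\psi_N(\cdot)v\|^p_{\mX_{T,\varphi}}\big)^{1/p}$, so it is enough to bound the right-hand side uniformly in $N$ for one well chosen large $p$. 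Fixing $\beta\in(\alpha,1)$ and then $p$ large with $\beta p>1$ and $\beta-1/p\geq\alpha$, the fractional Sobolev embedding $W^{\beta,p}([-T,T];\mX_\varphi)\hookrightarrow\mathcal C^{\beta-1/p}([-T,T];\mX_\varphi)\hookrightarrow\mathcal C^{\alpha}([-T,T];\mX_\varphi)$ together with the Gagliardo representation of the $W^{\beta,p}$ norm reduces the task to proving, uniformly in $N$ and in $t,s\in[-T,T]$,
\[
\text{(a)}\quad\E_{\rho_N}\|\psi_N(t)v\|^p_{\mX_\varphi}\leq C,\qquad\qquad\text{(b)}\quad\E_{\rho_N}\|\psi_N(t)v-\psi_N(s)v\|^p_{\mX_\varphi}\leq C\,|t-s|^{p},
\]
since then $\int_{[-T,T]^2}|t-s|^{-1-\beta p}\,\E_{\rho_N}\|\psi_N(t)v-\psi_N(s)v\|^p_{\mX_\varphi}\,dt\,ds\lesssim\int_{[-T,T]^2}|t-s|^{(1-\beta)p-1}\,dt\,ds<\infty$.

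For (a) I would use the autonomy of \eqref{NLSN} and the invariance of $\rho_N$ under $\psi_N$ from Subsection \ref{invdiff} (which says precisely that $\psi_N(t)$ pushes $\rho_N$ forward to itself, hence $\E_{\rho_N}(G\circ\psi_N(t))=\E_{\rho_N}(G)$ for any nonnegative measurable $G$) to reduce to $\E_{\rho_N}\|v\|^p_{\mX_\varphi}$. To bound the latter I would apply Minkowski's integral inequality (legitimate as $p\geq 2$) to bring the $\rho_N$-expectation inside the $L^2_x$ integral defining $\|\cdot\|_{\mX_\varphi}$, leaving a pointwise estimate of $\E_{\rho_N}|D_x^{\sigma}v(x)|^p$. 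As $\sigma<-7/4<-1$, the convolution kernel $K_\sigma$ of $D_x^{\sigma}=(1-\partial_x^2)^{\sigma/2}$ is a genuine function, integrable and exponentially decaying (it is the inverse Fourier transform of $\an{\xi}^{\sigma}$, holomorphic in the strip $|\mathrm{Im}\,\xi|<1$), so writing $D_x^\sigma v=K_\sigma*v$ and using Minkowski once more together with \eqref{phi1} of Lemma \ref{lem-theoneforfed} gives $\big(\E_{\rho_N}|D_x^\sigma v(x)|^p\big)^{1/p}\leq\int|K_\sigma(x-y)|\,\varphi_p(y)^{1/p}\,dy$. The functions $\varphi_r$ produced by Lemmas \ref{lem-magicwithoutder} and \ref{lem-magicwithder} grow at most exponentially in $|x|$ — the crude lower bound $D_N\geq e^{-cN}$, which follows from Jensen's inequality applied to the very definition of $D_N$, yields $\varphi_r(x)\lesssim_r e^{c|x|}$ with $c$ independent of $r$ — so, taking $p$ large enough that $c/p<1$, the convolution converges and grows at most like $e^{(c/p)|x|}$; choosing $\varphi$ to be an explicit exponential (e.g.\ $\varphi(x)=e^{C|x|}$ for $C$ large) the weight $(1+\varphi)^{-1}\an{x}^{-6}$ then makes the remaining $x$-integral finite. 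This is (a).

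For (b) the idea is exactly the one motivating the definition of $\mX_\varphi$: its regularity is chosen two orders below where invariance holds and its weights are strengthened so that $\partial_t$ of a solution of \eqref{NLSN} again lies in $\mX_\varphi$. Writing $\psi_N(t)v-\psi_N(s)v=\int_s^t\partial_\tau(\psi_N(\tau)v)\,d\tau$ with $\partial_\tau(\psi_N(\tau)v)=i\lap(\psi_N(\tau)v)-i\chi_N|\psi_N(\tau)v|^2\psi_N(\tau)v$, this last quantity belongs $\rho_N$-almost surely to $\mX_\varphi$: indeed $\psi_N(\tau)v\in\mathcal Z_\varphi$ a.s.\ by Lemma \ref{lem-welldef}, the Laplacian costs the two extra derivatives ($D_x^\sigma\lap=D_x^\sigma-D_x^{\sigma+2}$, with $\sigma+2\in[0,1/4)$ controlled by the first term of the $\mathcal Z_\varphi$-norm and the term $D_x^\sigma$ handled by smoothing), and $\chi_N|\psi_N(\tau)v|^2\psi_N(\tau)v$ is controlled through the $L^6$ part of the $\mathcal Z_\varphi$-norm after smoothing by $D_x^\sigma$. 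Hölder's inequality in $\tau$ and the invariance again give $\E_{\rho_N}\|\psi_N(t)v-\psi_N(s)v\|^p_{\mX_\varphi}\leq|t-s|^{p}\,\E_{\rho_N}\|i\lap v-i\chi_N|v|^2v\|^p_{\mX_\varphi}$, so it remains to bound this expectation uniformly in $N$. For the Laplacian part the relevant operator is $D_x^{\sigma+2}$ with $\sigma+2\in[0,1/4)$, so (after commuting the smooth weight past it, which only produces lower-order terms in $x$) the square of the weighted $L^2$ norm is equivalent to a Gagliardo seminorm $\iint\frac{|\cdot|^2}{|x-y|^{1+2(\sigma+2)}}$, whose $p$-th $\rho_N$-moment is controlled by \eqref{phi2} of Lemma \ref{lem-theoneforfed} (this is where $\alpha\leq 1/2$ is used, through the admissible range of exponents in Lemma \ref{lem-magicwithder}), the growth of $\varphi_{p,\cdot}$ being absorbed by the weight as in (a). For the cubic part, $\|\chi_N|v|^2v\|_{\mX_\varphi}\lesssim\||v|^3\|_{L^2(\tilde w)}\lesssim\|v\|^3_{L^6(\tilde w^{1/3})}$ after the $D_x^\sigma$-smoothing (a Schur estimate using the exponential decay of $K_\sigma$ and $\chi_N\leq 1$), and $\E_{\rho_N}\|v\|^{3p}_{L^6(\tilde w^{1/3})}$ is bounded uniformly in $N$ by \eqref{phi1} with exponent $3p$. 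Collecting the estimates and enlarging $\varphi$ so as to dominate every $\varphi_r^{1/r}$ and $\varphi_{r,\alpha_0}^{1/r}$ that occurs proves (b), hence the proposition.

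The main obstacle I expect is the bookkeeping imposed by the weights. Since $\rho_N$ and its limit $\rho$ are supported on non-decaying functions — in particular not on $L^2(\R)$ — all the unweighted norms that would appear after discarding $(1+\varphi)^{-1}\an{x}^{-6}$ are almost surely infinite, so every estimate must be carried through with the weight in place; this is why the weight is taken far stronger than scaling requires, and why it matters that the kernels arising from $\sigma<-1$ (that of $D_x^\sigma$, and of the smoothing operators used for the Laplacian and cubic terms) decay exponentially rather than merely lying in $L^1$, so that they can be convolved against the exponentially growing moment bounds $\varphi_r$. A secondary subtlety is that $p$ must be chosen in a bounded window: large enough for the time embedding and for $c/p<1$, yet small enough that the low-regularity ($\sigma+2<1/4$) Gagliardo control of the Laplacian term in (b) remains valid.
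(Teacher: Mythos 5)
Your route differs from the paper's. The paper never leaves $L^2$: for $\alpha\le 1/2$ one has the elementary bound
\[
\sup_{t_1\ne t_2}\frac{\|u(t_1)-u(t_2)\|_{\mX_\varphi}}{|t_1-t_2|^\alpha}\le C(T)\,\|\partial_t u\|_{L^2([-T,T],\mX_\varphi)},
\]
which, after a Sobolev embedding in time for the $L^\infty_t$ piece, Fubini, $\partial_t\psi_N(t)u_0=i\lap\psi_N(t)u_0-i\chi_N|\psi_N(t)u_0|^2\psi_N(t)u_0$, and invariance of $\rho_N$ under $\psi_N$ in the $\mathcal Z_\varphi$ topology, reduces \eqref{crucialest} to $\|u_0\|_{L^2_{\rho_N},\mathcal Z_\varphi}$ (Lemma~\ref{lem-xbyz}); the latter is split into its $L^2$ and $L^6$ parts and bounded by Lemma~\ref{lem-theoneforfed} applied with $r=6$ and $(r,\alpha)=(2,s)$, $s<1/4$ (Lemmas~\ref{lem-boundL2},~\ref{lem-boundL6}), the weight $\xi$ being absorbed through a commutator/Leibniz estimate rather than through a convolution. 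You instead pass to a large $p$-th moment and use a Kolmogorov-type $W^{\beta,p}\hookrightarrow\mathcal C^{\beta-1/p}$ embedding, then hit the $D^\sigma$ smoothing directly by convolving $|K_\sigma|$ against the pointwise moment bound $\varphi_p^{1/p}$. This is a coherent alternative scheme, built on the same ingredients.

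There is, however, a genuine gap which you flag but do not close: the window for $p$ is not shown to be nonempty, and your own estimate of the exponential rate $c$ says it is empty. For the convolution $\int|K_\sigma(x-y)|\,\varphi_p(y)^{1/p}\,dy$ to be finite you need $c/p<1$, i.e.\ $p>c$. On the other hand, bounding the Gagliardo seminorm of $D^{\sigma+2}$ in step~(b) via Minkowski and \eqref{phi2} of Lemma~\ref{lem-theoneforfed} requires setting $r=p$ and $\alpha'=\frac12+(\sigma+2)-\frac1p$, and the hypothesis $\alpha'\le\min(2/p,\,1-3/(2p))$ of Lemma~\ref{lem-magicwithder} forces $p\le\frac{6}{1+2(\sigma+2)}$, which for $\sigma$ near $-7/4$ is just above $4$. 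So you need $c<4$ or so. But the only bound on $c$ you offer is the Jensen one, $D_N\ge e^{-cN}$ with $c=\E_q|u(0)|^4$; since $\E_q|u(0)|^2=\pi$ (as in the proof of Lemma~\ref{lem-boundDN}), this gives $c=2\pi^2\approx 20\gg4$, and the window $(c,\,6/(1+2(\sigma+2))]$ is empty. To rescue the argument you would need the sharper spectral asymptotic $D_N\sim\mathrm{const}\cdot e^{-2E(V)N}$ and then an explicit upper bound on the ground-state energy $E(V)$ of $L_0+V$ placing $2E(V)$ below $4$ — neither of which you establish, and neither of which the paper ever needs, since working at $p=2$ the Gagliardo constraint is simply $s<1/4$ (no competition with $c$), and the exponential growth of $\varphi_r$ is absorbed by the freely chosen weight $\xi$ \emph{before} any convolution with $K_\sigma$ enters.

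A secondary remark: the phrase ``after commuting the smooth weight past $D^{\sigma+2}$, which only produces lower-order terms in $x$'' is doing a lot of work. The weight you must use, $(1+\varphi)^{-1}\an{x}^{-6}$ with $\varphi(x)=e^{C|x|}$, has derivatives comparable to itself, so it is not ``lower order'' in the usual symbolic sense; commuting it past a positive-order derivative has to be done by hand, which is exactly what the hypotheses on $\xi$ in Lemma~\ref{lem-boundL2} (e.g.\ $|D^s\xi|^2\lesssim\varphi_2^{-1}\an{x}^{-2}$, $|\xi|^{1-2s}\lesssim\varphi_2^{-1}\an{x}^{-2}$) are engineered to encode. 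If you keep your scheme, you should replace the hand-waving commutation by a statement of this form.
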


We go step by step and we start by explaining the reason why we introduced the space $\mathcal Z_\varphi$.
\begin{lemma}\label{lem-xbyz} There exists a constant $C(T)$ independent from $\varphi$ such that for all $N$
$$
\|u\|_{L^2_{\nu_N}, \mX_{T,\varphi}} \leq C(T) \|u_0\|_{L^2_{\rho_N},\mathcal Z_\varphi}.
$$
\end{lemma}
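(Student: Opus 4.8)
The plan is to unfold the definition of $\nu_N$, transfer everything to $\rho_N$ by means of the invariance under $\psi_N$, and then reduce to a deterministic estimate that is exactly what the space $\mathcal Z_\varphi$ is built for. By \eqref{invarmeasf} applied to the functional $F(u)=\|u\|_{\mX_{T,\varphi}}^2$ (viewing $u$ as a path $t\mapsto u(t)\in\mX_\varphi$),
$$
\|u\|_{L^2_{\nu_N},\mX_{T,\varphi}}^2=\int_{\mX_\varphi}\big\|\,t\mapsto\psi_N(t)u_0\,\big\|_{\mX_{T,\varphi}}^2\,d\rho_N(u_0),
$$
so it suffices to estimate the integrand pointwise in $u_0$ and integrate.

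For the pointwise step I would use equation \eqref{NLSN}: $\rho_N$-almost surely the curve $s\mapsto\psi_N(s)u_0$ is $C^1$ from $[-T,T]$ into $\mX_\varphi$ with $\partial_s(\psi_N(s)u_0)=i\lap\psi_N(s)u_0-i\chi_N|\psi_N(s)u_0|^2\psi_N(s)u_0$, the two-derivative loss in the $\lap$-term being harmless since $\sigma\geq-2$ while $\psi_N(s)u_0\in\mathcal Z_\varphi$ (regularity $\sigma+2$) by Lemma \ref{lem-welldef}. Writing $\psi_N(t_1)u_0-\psi_N(t_2)u_0=\int_{t_2}^{t_1}\partial_s(\psi_N(s)u_0)\,ds$ and applying Cauchy--Schwarz in $s$ (here $\alpha\leq\tfrac12$, as in Proposition \ref{propest}) one gets, for $t_1,t_2\in[-T,T]$,
$$
\frac{\|\psi_N(t_1)u_0-\psi_N(t_2)u_0\|_{\mX_\varphi}}{|t_1-t_2|^\alpha}\leq|t_1-t_2|^{\frac12-\alpha}\Big(\int_{-T}^T\|\partial_s(\psi_N(s)u_0)\|_{\mX_\varphi}^2\,ds\Big)^{1/2},
$$
and similarly $\sup_{|t|\leq T}\|\psi_N(t)u_0\|_{\mX_\varphi}\leq\|u_0\|_{\mX_\varphi}+(2T)^{1/2}\big(\int_{-T}^T\|\partial_s(\psi_N(s)u_0)\|_{\mX_\varphi}^2\,ds\big)^{1/2}$; summing,
$$
\big\|\,t\mapsto\psi_N(t)u_0\,\big\|_{\mX_{T,\varphi}}\leq\|u_0\|_{\mX_\varphi}+C(T)\Big(\int_{-T}^T\|\partial_s(\psi_N(s)u_0)\|_{\mX_\varphi}^2\,ds\Big)^{1/2}
$$
with $C(T)$ depending only on $T$ and $\alpha$.

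Next I would square, integrate against $\rho_N$, use Tonelli to exchange $\int d\rho_N$ and $\int ds$, and invoke the invariance of $\rho_N$ under each $\psi_N(s)$ — established for the $\sigma$-algebra of $\mathcal Z_\varphi$ in Subsection \ref{invdiff}, extended to non-negative $\rho_N$-integrable functionals by truncation — applied to $v\mapsto\|i\lap v-i\chi_N|v|^2v\|_{\mX_\varphi}^2$. Since $\|\partial_s(\psi_N(s)u_0)\|_{\mX_\varphi}=\|i\lap w-i\chi_N|w|^2w\|_{\mX_\varphi}$ at $w=\psi_N(s)u_0$, this gives $\int_{-T}^T\E_{\rho_N}(\|\partial_s(\psi_N(s)u_0)\|_{\mX_\varphi}^2)\,ds=2T\,\E_{\rho_N}(\|i\lap u_0-i\chi_N|u_0|^2u_0\|_{\mX_\varphi}^2)$, and the statement is reduced to the deterministic estimates
$$
\|\lap v\|_{\mX_\varphi}\ \lesssim\ \|v\|_{\mathcal Z_\varphi},\qquad \|\chi_N|v|^2v\|_{\mX_\varphi}\ \lesssim\ \|v\|_{\mathcal Z_\varphi^6}^{3},
$$
with constants independent of $N$ and of $\varphi$ (together with $\|v\|_{\mX_\varphi}\lesssim\|v\|_{\mathcal Z_\varphi}$ for the $\|u_0\|_{\mX_\varphi}$ term above); integrating these and using the uniform-in-$N$ moment bounds of Lemma \ref{lem-theoneforfed} (which is how $\varphi$ gets chosen in Proposition \ref{propest}) controls $\E_{\rho_N}(\|i\lap u_0-i\chi_N|u_0|^2u_0\|_{\mX_\varphi}^2)$ uniformly in $N$, so that the nonlinear contribution is a sixth $\mathcal Z_\varphi$-moment reabsorbed into the right-hand side.

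These two deterministic bounds are the reason $\mathcal Z_\varphi$ was introduced. For the dispersive one, $\lap=\pm(1-D_x^2)$ gives $D_x^\sigma\lap v=\pm(D_x^\sigma v-D_x^{\sigma+2}v)$, hence $\|\lap v\|_{\mX_\varphi}\leq\|v\|_{\mX_\varphi}+\|(1+\varphi)^{-1}\an{x}^{-6}D_x^{\sigma+2}v\|_{L^2}\leq\|v\|_{\mX_\varphi}+\|v\|_{\mathcal Z_\varphi^2}$ (using $\an{x}^{-6}\leq\an{x}^{-2}$), while $\|v\|_{\mX_\varphi}\lesssim\|v\|_{\mathcal Z_\varphi^6}$ comes from $D_x^\sigma=D_x^{-2}D_x^{\sigma+2}$. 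For the cubic one the crucial identity is $(1+\varphi)^{-1}\an{x}^{-6}=\big((1+\varphi)^{-1/3}\an{x}^{-2}\big)^3$; combining it with $0\leq\chi_N\leq1$, with the fact that for $\sigma<-\tfrac74$ the Bessel kernel of $D_x^\sigma$ is non-negative and decays exponentially, and with a Schur/Young argument that moves the weight $(1+\varphi)^{-1}\an{x}^{-6}$ \emph{through} the operator $D_x^\sigma$, one reaches $\|\chi_N|v|^2v\|_{\mX_\varphi}\lesssim\|(1+\varphi)^{-1/3}\an{x}^{-2}v\|_{L^6}^3=\|v\|_{\mathcal Z_\varphi^6}^3$. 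I expect this commutation to be the main obstacle: the weight must be kept \emph{inside} $D_x^\sigma$ — pulling it out would bring in the length $\sim N$ of $\mathrm{supp}\,\chi_N$ and destroy the uniformity in $N$ — and one has to check that transporting the monotone, at least polynomially decaying weight across the (only exponentially decaying) kernels of $D_x^\sigma$ and $D_x^{-2}$ costs only integrable factors, with constants independent of $N$ (the sole property of $\chi_N$ used being $0\leq\chi_N\leq1$) and of $\varphi$.
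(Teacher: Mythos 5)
Your proof follows essentially the same route as the paper's: unfold $\nu_N$ via \eqref{invarmeasf}, control the $\alpha$-H\"older seminorm and the sup-norm by $\|\partial_t u\|_{L^2_t\mX_\varphi}$ via Cauchy--Schwarz, express $\partial_t\psi_N(t)u_0$ through equation \eqref{NLSN}, swap $\rho_N$ and $dt$ by Fubini, and invoke the invariance of $\rho_N$ under $\psi_N$ in $\mathcal Z_\varphi$ to reduce to the deterministic bounds $\|\lap v\|_{\mX_\varphi}\lesssim\|v\|_{\mathcal Z_\varphi^2}$ and $\|\chi_N|v|^2v\|_{\mX_\varphi}\lesssim\|v\|_{\mathcal Z_\varphi^6}^3$, which is exactly the motivation for introducing $\mathcal Z_\varphi$. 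Two small remarks: you are right that the cubic term produces $\|u_0\|_{L^6_{\rho_N},\mathcal Z_\varphi^6}^3$ rather than the literal $\|u_0\|_{L^2_{\rho_N},\mathcal Z_\varphi}$ stated in the lemma --- the paper silently makes the same identification, and it is harmless for the intended use (uniform boundedness in $N$, via Lemma \ref{lem-theoneforfed}), but as stated the inequality is off by a cube; and you explicitly flag the commutation of the weight $(1+\varphi)^{-1}\an{x}^{-6}$ with the Bessel kernel of $D_x^\sigma$ as the potentially delicate point, which the paper passes over in silence.
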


\begin{proof} The ideas of the proof are two fold : the first one is that we can estimate the $\alpha $ Lipschitz continuity by bounding $\partial_t u$ which we know explicitly in terms of $u$ as $u$ is $\nu_N$ almost surely the solution to \eqref{NLSN}, the second one is that $\rho_N$ is invariant under the flow of \eqref{NLSN}.

We recall the definition of the $\|\cdot\|_{\mathcal{X}_{T,\varphi}}$ given in the introduction to be
\begin{equation}\label{norme}
\|u\|_{\mathcal{X}_{T,\varphi}}=
\sup_{t_1,t_2\in [-T,T]}\frac{\|u(t_1)-u(t_2)\|_{\mathcal{X}_\varphi}}{|t_1-t_2|^\alpha}+
\|u\|_{L^\infty([-T,T],\mathcal{X}_\varphi)}
\end{equation}
(we will fix later the weight function $\varphi$).
We observe that, by H\"older inequality,
\begin{eqnarray*}
\|u(t_1)-u(t_2)\|_{\mathcal{X}_\varphi}
&=&
\|(1+\varphi)^{-1}\langle x\rangle^{-6}D_x^\sigma(u(t_1)-u(t_2))\|_{L^2}
\\
&=&
\left\|(1+\varphi)^{-1}\langle x\rangle^{-6}D_x^\sigma\int_{t_1}^{t_2}\partial_\tau u(\tau)d\tau\right\|_{L^2}
\\
&\leq&
|t_1-t_2|^{1/2}\|\partial_t u\|_{L^2([-T,T],\mathcal{X}_\varphi)}
\end{eqnarray*}
and thus for every $\alpha\in(0,\frac12]$, we get 
$$
\sup_{t_1,t_2\in [-T,T]}\frac{\|u(t_1)-u(t_2)\|_{\mathcal{X}_\varphi}}{|t_1-t_2|^\alpha} \leq C(T) (\|\partial_t u\|_{L^2([-T,T],\mX_\varphi)} + \| u\|_{L^2([-T,T],\mX_\varphi)}).
$$
By Sobolev embeddings, we have 
$$
\|u\|_{L^\infty([-T,T],\mathcal{X}_\varphi)} \leq C(T) (\|\partial_t u\|_{L^2([-T,T],\mX_\varphi)} + \| u\|_{L^2([-T,T],\mX_\varphi)}).
$$
We take the $L^2_{\nu_N}$ norm to get
$$
\|u\|_{L^2_{\nu_N},L^\infty([-T,T],\mathcal{X}_\varphi)} \leq C(T) (\|\partial_t u\|_{L^2_{\nu_N},L^2([-T,T],\mX_\varphi)} + \| u\|_{L^2_{\nu_N},L^2([-T,T],\mX_\varphi)}).
$$
We use the definition of $\nu_N$ as the image measure of $\rho_N$ under $\psi_N(t)$ to get
$$
\|u\|_{L^2_{\nu_N},L^\infty([-T,T],\mathcal{X}_\varphi)} \leq C(T) (\|\partial_t \psi_N(t)u_0\|_{L^2_{\rho_N},L^2([-T,T],\mX_\varphi)} + \| \psi_N(t)u_0\|_{L^2_{\rho_N},L^2([-T,T],\mX_\varphi)}).
$$
We can now exchange the norms in probability and in time by Fubini to get
$$
\|u\|_{L^2_{\nu_N},L^\infty([-T,T],\mathcal{X}_\varphi)} \leq C(T) (\|\partial_t \psi_N(t)u_0\|_{L^2([-T,T](L^2_{\rho_N},\mX_\varphi))} + \| \psi_N(t)u_0\|_{L^2([-T,T](L^2_{\rho_N},\mX_\varphi))}).
$$
As $\psi_N$ is the flow of \eqref{NLSN}, we get
$$
\|\partial_t \psi_N(t)u_0\|_{L^2([-T,T](L^2_{\rho_N},\mX_\varphi)}\leq \|\lap \psi_N(t)u_0\|_{L^2([-T,T](L^2_{\rho_N},\mX_\varphi)}+ \|\chi_N |\psi_N(t)u_0|^2\|_{L^2([-T,T](L^2_{\rho_N},\mX_\varphi)}.
$$
We recall that $\mathcal Z_\varphi$ is given by \eqref{spaceZ1}. Thanks to its $L^2$ part, we have 
$$
\|\lap \psi_N(t)u_0\|_{L^2([-T,T](L^2_{\rho_N},\mX_\varphi)} \leq \| \psi_N(t)u_0\|_{L^2([-T,T](L^2_{\rho_N},\mathcal Z_\varphi)}
$$
and 
$$
\| \psi_N(t)u_0\|_{L^2([-T,T](L^2_{\rho_N},\mX_\varphi)} \leq \| \psi_N(t)u_0\|_{L^2([-T,T](L^2_{\rho_N},\mathcal Z_\varphi)}.
$$
And thanks to its $L^6$ part and the fact that $\chi_N \leq 1$, we have 
$$
\|\chi_N |\psi_N(t)u_0|^2\|_{L^2([-T,T](L^2_{\rho_N},\mX_\varphi)} \leq \| \psi_N(t)u_0\|_{L^2([-T,T](L^2_{\rho_N},\mathcal Z_\varphi)}.
$$

Therefore we get 
$$
\|u\|_{L^2_{\nu_N},L^\infty([-T,T],\mathcal{X}_\varphi)} \leq C(T) \| \psi_N(t)u_0\|_{L^2([-T,T](L^2_{\rho_N},\mathcal Z_\varphi)}.
$$
We now use the invariance of $\rho_N$ under $\psi_N(t)$ for the topological $\sigma$-algebra of $\mathcal Z_\varphi$ to get
$$
\|u\|_{L^2_{\nu_N},L^\infty([-T,T],\mathcal{X}_\varphi)} \leq C(T) \| u_0\|_{L^2([-T,T](L^2_{\rho_N},\mathcal Z_\varphi)}
$$
and we take the norm in time to get
$$
\|u\|_{L^2_{\nu_N},L^\infty([-T,T],\mathcal{X}_\varphi)} \leq C(T)\sqrt T \| u_0\|_{L^2_{\rho_N},\mathcal Z_\varphi}
$$
which concludes the proof of the first lemma.\end{proof}

We are left with proving that there exists $\varphi$ such that $\|u\|_{L^2_{\rho_N},\mathcal Z_\varphi}$ is uniformly bounded in $N$.

We divide the problem into two parts by writing $\mathcal Z_\varphi$ as $\mathcal Z_\varphi^2 \cap \mathcal Z_\varphi^6$ with $\mathcal Z_\varphi^2$ the space induced by the norm
$$
\|\an{x}^{-2}(1+\varphi)^{-1} D^{\sigma+2} f\|_{L^2}
$$
and $\mathcal Z_\varphi^6$ the space induced by the norm 
$$
\|\an{x}^{-2}(1+\varphi)^{-1/3}  f\|_{L^6}.
$$

We start with the $L^6$ part as the absence of derivatives makes it easier to deal with.

\begin{lemma}\label{lem-boundL6} Let $\varphi$ be a non negative, even function increasing on $\R^+$ such that $\varphi \geq \varphi_6^{1/2}$ where $\varphi_6$ is the one defined in \eqref{phi1}, then 
$$
\|u\|_{L^2_{\rho_N},\mathcal Z_\varphi^6}
$$
is uniformly bounded in $N$.
\end{lemma}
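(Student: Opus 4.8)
The plan is to bound the $L^6$ part directly by the uniform sixth-moment estimate \eqref{phi1}, using only Jensen's inequality and Tonelli's theorem. The point is that $\mathcal Z_\varphi^6$ involves no derivatives, so the computation never leaves the pointwise level and the weight $\varphi$ is there precisely to absorb the $x$-dependence of $\varphi_6$.

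First I would expand the definition of the $L^2_{\rho_N}$ norm, writing
$$
\|u\|_{L^2_{\rho_N},\mathcal Z_\varphi^6}^2=\E_{\rho_N}\Big(\|u\|_{\mathcal Z_\varphi^6}^2\Big)=\E_{\rho_N}\Big[\Big(\int_\R \an{x}^{-12}(1+\varphi(x))^{-2}|u(x)|^6\,dx\Big)^{1/3}\Big],
$$
since by definition $\|u\|_{\mathcal Z_\varphi^6}^6=\int_\R \an{x}^{-12}(1+\varphi(x))^{-2}|u(x)|^6\,dx$. As $t\mapsto t^{1/3}$ is concave on $\R^+$ and $\rho_N$ is a probability measure, Jensen's inequality moves the exponent outside the expectation, and since the integrand is non-negative, Tonelli lets me exchange $\E_{\rho_N}$ with $\int dx$, so that
$$
\|u\|_{L^2_{\rho_N},\mathcal Z_\varphi^6}^2\leq\Big(\int_\R \an{x}^{-12}(1+\varphi(x))^{-2}\,\E_{\rho_N}(|u(x)|^6)\,dx\Big)^{1/3}.
$$
Then I would plug in \eqref{phi1}, namely $\E_{\rho_N}(|u(x)|^6)\leq\varphi_6(x)$, and use the hypothesis $\varphi\geq\varphi_6^{1/2}$, which gives $(1+\varphi(x))^{2}\geq\varphi(x)^2\geq\varphi_6(x)$ and hence $(1+\varphi(x))^{-2}\varphi_6(x)\leq 1$. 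What remains is $\int_\R\an{x}^{-12}\,dx<\infty$, a finite constant independent of $N$, so $\|u\|_{L^2_{\rho_N},\mathcal Z_\varphi^6}$ is bounded uniformly in $N$, as claimed.

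There is no real obstacle here: the argument is just the combination of the already-proved $N$-uniform moment bound \eqref{phi1} (whose majorant $\varphi_6$ does not depend on $N$) with two elementary inequalities. The only points deserving a word are that Tonelli applies because everything in sight is non-negative, and that the spatial weight $\an{x}^{-12}$ makes the integral converge. As advertised, the absence of derivatives in $\mathcal Z_\varphi^6$ is exactly what makes this the easy half of Proposition \ref{propest}.
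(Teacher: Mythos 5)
Your proof is correct and follows essentially the same route as the paper's: the paper uses the inclusion $\|\cdot\|_{L^2_{\rho_N}}\leq\|\cdot\|_{L^6_{\rho_N}}$ on the probability space, which is exactly the Jensen step you phrase via concavity of $t\mapsto t^{1/3}$, then exchanges integrals and applies the uniform sixth-moment bound together with the condition $\varphi\geq\varphi_6^{1/2}$.
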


\begin{proof} As $\rho_N$ is a probability measure we have $\|\cdot \|_{L^2_{\rho_N}} \leq \|\cdot \|_{L^6_{\rho_N}}$, hence we have 
$$
\|u\|_{L^2_{\rho_N},\mathcal Z_\varphi^6}^6 \leq \E_{\rho_N} \Big( \int_{\R }dx \an{x}^{-12}(1+\varphi(x))^{-2}  |u(x)|^6\Big).
$$
We exchange the two integrations to get
$$
\|u\|_{L^2_{\rho_N},\mathcal Z_\varphi^6}^6 \leq \int_{\R }dx \an{x}^{-12}(1+\varphi(x))^{-2}  \E_{\rho_N} \Big( |u(x)|^6\Big).
$$
We use Lemma \ref{lem-theoneforfed} to get
$$
 \E_{\rho_N} \Big( |u(x)|^6\Big) \leq \varphi_6(x)
 $$
which yields
$$
\|u\|_{L^2_{\rho_N},\mathcal Z_\varphi^6}^6 \leq \int_{\R }dx \an{x}^{-12}(1+\varphi(x))^{-2}  \varphi_6(x).
$$
With the choice of $\varphi$, this integral converge and does not depend on $N$.
\end{proof}

We now deal with the $L^2$ part of $\mathcal Z_{\varphi}$.

\begin{lemma}\label{lem-boundL2} Let $s<\frac14$. Let $\xi$ be a smooth positive even function decreasing on $\R_+$ and flat enough in $+\infty$ in the sense that
\begin{itemize}
\item $|D^s \xi(x)|^2 \lesssim \varphi_2(x)^{-1} \an{x}^{-2}$,
\item $| \xi(x)|^2 \lesssim \varphi_{2,s}(x)^{-1} \an{x}^{-3}$,
\item $| \xi(x)|^{1-2s} \lesssim \varphi_2(x)^{-1} \an{x}^{-2}$,
\end{itemize}
where $\varphi_2$ and $\varphi_{2,s}$ are the functions defined in Lemma \ref{lem-theoneforfed}.

Then we get that
$$
\|\xi(x)D^s u\|_{L^2_{\rho_N},L^2(\R)}
$$
is uniformly bounded in $N$.
\end{lemma}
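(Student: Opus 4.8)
I will prove Lemma~\ref{lem-boundL2} by realising the Bessel operator $D^s=\an{D}^s$ as a perturbation of the identity by a positive integral operator whose kernel is singular of order $|z|^{-1-s}$ at the origin and exponentially decaying at infinity, and then controlling everything by the moment bounds \eqref{phi1}, \eqref{phi2} of Lemma~\ref{lem-theoneforfed} with $r=2$ and $\alpha=s$. The restriction $s<\tfrac14$ is exactly what makes the pair $(r,\alpha)=(2,s)$ admissible there (when $r=2$ one needs $\alpha\le\min(2/r,\,1-3/(2r))=\tfrac14$), so that, uniformly in $N$,
$$
\E_{\rho_N}(|u(x)|^2)\le\varphi_2(x),\qquad \E_{\rho_N}(|u(x)-u(y)|^2)\le|x-y|^{1+2s}\,\varphi_{2,s}(\max(|x|,|y|)).
$$

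The first step is the subordination identity. For $0<s<2$ one has $(1+\xi^2)^{s/2}=a_s\int_0^\infty\bigl(1-e^{-\tau(1+\xi^2)}\bigr)\tau^{-1-s/2}\,d\tau$ with $a_s=\tfrac{s/2}{\Gamma(1-s/2)}$; writing $e^{\tau\partial_x^2}$ as convolution with the heat kernel $p_\tau(z)=(4\pi\tau)^{-1/2}e^{-z^2/(4\tau)}$ and using $\int_\R p_\tau=1$ one obtains
$$
D^su(x)=u(x)+\int_\R K_s(x-y)\bigl(u(x)-u(y)\bigr)\,dy,\qquad K_s(z)=a_s\int_0^\infty e^{-\tau}\tau^{-1-s/2}p_\tau(z)\,d\tau .
$$
A scaling substitution $\tau=z^2 v$ gives $K_s(z)\asymp|z|^{-1-s}$ as $z\to0$, while the elementary bound $\tau+\tfrac{z^2}{4\tau}\ge\tfrac{|z|}{2}+\tfrac{z^2}{8\tau}$ forces $K_s(z)$ to decay exponentially as $|z|\to\infty$. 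In particular $K_s\ge0$, and since near the origin $K_s(z)|z|^{(1+2s)/2}$ behaves like $|z|^{-1/2}$, which is integrable in one dimension, we get $\int_\R K_s(z)|z|^{(1+2s)/2}e^{\beta|z|}\,dz<\infty$ for every sufficiently small $\beta\ge0$.

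By Tonelli the quantity to estimate is $\int_\R\E_{\rho_N}\bigl(|\xi(x)D^su(x)|^2\bigr)\,dx$, and the triangle inequality in $L^2_{\rho_N}$ together with Minkowski's integral inequality gives
$$
\E_{\rho_N}\bigl(|\xi(x)D^su(x)|^2\bigr)^{1/2}\le|\xi(x)|\,\E_{\rho_N}(|u(x)|^2)^{1/2}+|\xi(x)|\int_\R K_s(x-y)\,\E_{\rho_N}(|u(x)-u(y)|^2)^{1/2}\,dy .
$$
For the first term, \eqref{phi1} bounds $\E_{\rho_N}(|u(x)|^2)$ by $\varphi_2(x)$, and since $\xi$ is bounded the third flatness hypothesis yields $|\xi(x)|^2\varphi_2(x)\lesssim|\xi(x)|^{1-2s}\varphi_2(x)\lesssim\an{x}^{-2}$, which is integrable uniformly in $N$ (recall $\varphi_2$ is $N$--independent). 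For the second term I insert \eqref{phi2} and split the $dy$--integral at $|y|=|x|$: on $\{|y|\le|x|\}$ one has $\varphi_{2,s}(\max(|x|,|y|))=\varphi_{2,s}(x)$ and the contribution is $\le|\xi(x)|\varphi_{2,s}(x)^{1/2}\int_\R K_s(z)|z|^{(1+2s)/2}\,dz$; on $\{|y|>|x|\}$ one uses $|y|\le|x|+|x-y|$ and the (at most) exponential growth of $\varphi_{2,s}$ — this is the growth produced by $D_N^{-1}$, whose rate is governed by the bottom of the spectrum of the anharmonic oscillator $-\tfrac12\lap_u+|u|^2+|u|^4$ — together with the exponential decay of $K_s$ to bound the contribution again by $C|\xi(x)|\varphi_{2,s}(x)^{1/2}$. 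Squaring and integrating, the second term is controlled by $\int_\R|\xi(x)|^2\varphi_{2,s}(x)\,dx\lesssim\int_\R\an{x}^{-3}\,dx<\infty$ by the second flatness hypothesis. (The first hypothesis, on $D^s\xi$, is used if one prefers to organise the argument through the commutator identity $\xi D^su=D^s(\xi u)-\int_\R K_s(x-y)(\xi(x)-\xi(y))u(y)\,dy$: expanding the last integral with the representation of $D^s\xi$ produces a term $(D^s\xi(x)-\xi(x))u(x)$, which is then handled exactly as the mass term via $|D^s\xi(x)|^2\lesssim\varphi_2(x)^{-1}\an{x}^{-2}$ and \eqref{phi1}.) Collecting the estimates, $\|\xi(x)D^su\|_{L^2_{\rho_N},L^2(\R)}$ is bounded by a constant depending only on $s$ and the implied constants in the flatness hypotheses, and in particular not on $N$.

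The main obstacle is the region $\{|y|>|x|\}$ in the second term: there $\varphi_{2,s}(y)$ can be as large as $D_N^{-1}$ and hence a priori grows exponentially in $|x|$, so one genuinely needs the exponential — rather than merely polynomial — decay of the subordination kernel $K_s$, and one must check that this decay rate dominates half the growth rate of $\varphi_{2,s}$. A robust way to sidestep identifying the exact exponential rate is to estimate $|u(x)-u(y)|\le|u(x)|+|u(y)|$ on $\{|y|>|x|\}$ and reduce this piece to integrals of the form $\int_\R K_s(x-y)\varphi_2(y)^{1/2}\,dy$; the more-than-exponential decay of $\xi$ forced by the third hypothesis (it implies $|\xi(x)|\lesssim e^{-c|x|/(1-2s)}\an{x}^{-2/(1-2s)}$ once $\varphi_2$ grows at most like $e^{c|x|}$) then absorbs any such growth, closing the estimate with constants independent of $N$.
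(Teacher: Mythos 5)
Your subordination-kernel route is genuinely different from the paper's, which instead invokes the Gagliardo (Sobolev--Slobodeckij) representation $\|D^s(\xi u)\|_{L^2}^2\approx\|\xi u\|_{L^2}^2+\iint\frac{|\xi(x)u(x)-\xi(y)u(y)|^2}{|x-y|^{1+2s}}\,dx\,dy$ after the Leibniz-type split $\|\xi D^su\|\le\|(D^s\xi)u\|+\|D^s(\xi u)\|$, and then decomposes $\xi(x)u(x)-\xi(y)u(y)=(\xi(x)-\xi(y))u(x)+\xi(y)(u(x)-u(y))$ on $\{|x|\le|y|\}$. The point of that decomposition is that after inserting \eqref{phi1}--\eqref{phi2} the resulting two integrands are $\frac{|\xi(x)-\xi(y)|^2}{|x-y|^{1+2s}}\varphi_2(x)$ and $\varphi_{2,s}(y)|\xi(y)|^2$: in each, the moment bound $\varphi$ and the cutoff $\xi$ sit at the \emph{same} variable (the smaller one for $\varphi_2$, the larger one for $\varphi_{2,s}$), so the three flatness hypotheses apply directly; moreover the kernel $|x-y|^{-1-2s}$ is cancelled exactly by the $|x-y|^{1+2s}$ in \eqref{phi2}, so there is no competition of decay rates at large $|x-y|$.

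That is exactly where your argument has a gap. On $\{|y|>|x|\}$ you arrive at the term $|\xi(x)|\int_{|y|>|x|}K_s(x-y)|x-y|^{(1+2s)/2}\varphi_{2,s}(y)^{1/2}\,dy$, in which $\xi$ is stuck at $x$ but the moment bound $\varphi_{2,s}$ is at $y$. The residual $K_s(x-y)|x-y|^{(1+2s)/2}$ has only a \emph{fixed} exponential decay rate (roughly $e^{-|x-y|}$), while $\varphi_{2,s}(y)\sim\max_{N\le|y|}D_N^{-1}$ grows exponentially at a rate governed by the ground-state energy $E(V)$ of the anharmonic oscillator, and nothing in the paper bounds $E(V)$ below the decay rate of $K_s$; the $y$-integral may simply diverge. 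Your proposed fix does not close this. Replacing $|u(x)-u(y)|$ by $|u(x)|+|u(y)|$ on $\{|y|>|x|\}$ reintroduces the non-integrable singularity of $K_s$ at $y=x$ (which is contained in the closure of $\{|y|>|x|\}$ for any $x\ne0$), since you have thrown away the cancellation that made $\int K_s(z)|z|^{(1+2s)/2}\,dz$ finite. And even restricting to $|x-y|\ge1$, the super-exponential decay of $\xi$ you extract from the third hypothesis acts on $\xi(x)$, which sits \emph{outside} the $y$-integral; it cannot make $\int_{|y|>|x|}K_s(x-y)\varphi_2(y)^{1/2}\,dy$ converge if $\varphi_2$ grows faster than $K_s$ decays. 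The same mismatch persists in your commutator variant, because the cross term there carries $|\xi(x)-\xi(y)|\le\xi(\min(|x|,|y|))$, i.e.\ $\xi$ at the \emph{smaller} point, while $\varphi_{2,s}$ is again evaluated at the larger one. To salvage the subordination approach you would need either an a priori bound showing the growth rate of $D_N^{-1}$ is strictly smaller than the decay rate of $K_s$ (not established in the paper), or a decomposition that, like the paper's, keeps $\xi$ and the $\varphi$'s at the same point.
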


\begin{proof} We have 
$$
\|\xi(x)D^s u\|_{L^2_{\rho_N},L^2(\R)} \leq I + II
$$
with 
$$
I = \|(D^s \xi)(x) u\|_{L^2_{\rho_N},L^2(\R)},\qquad II=\|D^s(\xi(x) u)\|_{L^2_{\rho_N},L^2(\R)}.
$$

Let us start with $I$. We have 
$$
I^2 = \E_{\rho_N} \Big( \int dx |D^s \xi(x)|^2 |u(x)|^2 \Big) 
$$
and we exchange the integrals to get
$$
I^2 = \int dx |D^s \xi(x)|^2 \E_{\rho_N} \Big(  |u(x)|^2 \Big) .
$$
We use the fact that by Lemma \ref{lem-theoneforfed} we have $\E_{\rho_N} \Big(  |u(x)|^2 \Big) \leq \varphi_2(x)$ and our assumptions on $\xi$ to make the integral converge and to get that $I$ is uniformly bounded in $N$.

The quantity $II$ can be written as
$$
II^2 = \E_{\rho_N}\Big( \int_{\R\times \R} dx dy \frac{|\xi(x)u(x) - \xi(y)u(y)|^2}{|x-y|^{1+2s}}\Big).
$$
We use symmetry over $x$ and $y$ to get
$$
II^2 = 2\E_{\rho_N} \Big( \int_{|x|\leq |y|} dx dy \frac{|\xi(x)u(x) - \xi(y)u(y)|^2}{|x-y|^{1+2s}}\Big).
$$

We now divide $II^2$ into two parts as $II^2 \leq A+B$ with 
$$
A = 2\E_{\rho_N} \Big( \int_{|x|\leq |y|} dx dy \frac{|\xi(x) - \xi(y)|^2}{|x-y|^{1+2s}}|u(x)|^2\Big)
$$
and 
$$
B = 2\E_{\rho_N} \Big( \int_{|x|\leq |y|} dx dy \frac{|u(x) - u(y)|^2}{|x-y|^{1+2s}}|\xi(y)|^2\Big).
$$
We exchange the order of integration to get
$$
A = 2\int_{|x|\leq |y|} dx dy \frac{|\xi(x) - \xi(y)|^2}{|x-y|^{1+2s}}\E_{\rho_N} \Big( |u(x)|^2\Big)
$$
and 
$$
B = 2\int_{|x|\leq |y|} dx dy\: \E_{\rho_N} \Big( \frac{|u(x) - u(y)|^2}{|x-y|^{1+2s}}\Big)|\xi(y)|^2.
$$
We use \eqref{phi1}-\eqref{phi2} (notice that the couple $(r,\alpha) = (2,s)$ falls within the assumptions of Lemma \ref{lem-theoneforfed}) to get
$$
A = 2\int_{|x|\leq |y|} dx dy \frac{|\xi(x) - \xi(y)|^2}{|x-y|^{1+2s}}\varphi_2(x)
$$
and as $|x|\leq |y|$,
$$
B = 2\int_{|x|\leq |y|} dx dy  \:\varphi_{2,s}(y)|\xi(y)|^2.
$$
For $B$, we integrate in $x$ to get
$$
B = 4\int_{\R} dy \:|y| \varphi_{2,s}(y)|\xi(y)|^2
$$
and we use the hypothesis on $\xi$ to get this integral converge and is uniformly bounded in $N$.

For $A$, we use the smoothness and flatness of $\xi$ at $\infty$ to get that $\xi'$ is bounded and hence for $|x-y|\leq 1$
$$
\frac{|\xi(x) - \xi(y)|^2}{|x-y|^{1+2s}} \lesssim (|\xi(x)|+|\xi(y)|)^{2-1-2s}
$$
and the fact that $\xi$ is even, decreasing on $\R^+$ and $|x|\leq |y|$ to get
$$
\frac{|\xi(x) - \xi(y)|^2}{|x-y|^{1+2s}} \lesssim |\xi(x)|^{1-2s}.
$$
When $|x-y|\geq 1$ we get
$$
\frac{|\xi(x) - \xi(y)|^2}{|x-y|^{1+2s}} \leq \frac{|\xi(x)|^{2}}{|x-y|^{1+2s}}.
$$
Therefore, we have
$$
A \lesssim  \int_{|x|\leq |y|,|x-y|\leq 1} dx dy |\xi(x)|^{1-2s}\varphi_2(x) + \int_{|x|\leq |y|,|x-y|\geq 1} dx dy \frac{|\xi(x)|^{2}}{|x-y|^{1+2s}}\varphi_2(x).
$$
We drop the restriction $|x|\leq |y|$ and we integrate in $y$. We have that $\int_{|x-y|\leq 1}dy$ is finite and does not depend on $x$ and so is $\int_{|x-y|\geq 1}\frac{dy}{|x-y|^{1+2s}}$, hence
$$
A \lesssim \int_{\R} dx |\xi(x)|^{1-2s}\varphi_2(x) + \int_{\R} dx  |\xi(x)|^{2}\varphi_2(x)
$$
and we use the assumptions on $\xi$ to get that the integral converges and are uniformly bounded in $N$.
\end{proof}

\begin{proof}[Proof of Proposition \ref{propest}. ] By Lemma \ref{lem-xbyz}, we have that it is sufficient to get a $\varphi$ such that
$$
\|u\|_{L^2_{\rho_N},\mathcal Z_\varphi}\leq C
$$
where the constant $C$ does not depend on $N$ to conclude. We take $\varphi \leq \varphi_6^{1/2}$ and such that $(1+\varphi(x)) \an{x}^2 = \xi(x)^{-1}$ with $\xi$ satisfying the assumptions of Lemma \ref{lem-boundL2}. Then, by Lemma \ref{lem-boundL6} and Lemma \ref{lem-boundL2}, we get that 
$
\|u\|_{L^2_{\rho_N},\mathcal Z_\varphi}
$
is uniformly bounded in $N$ which concludes the proof.
\end{proof}

We are now ready to prove the main result of this section.

\begin{proposition}
Let $T>0$ and $\varepsilon>0$. Then the family of measures $(\nu_N)_{N\geq1}$ is tight in $\mathcal{X}^\varepsilon_{T,\varphi}$.
\end{proposition}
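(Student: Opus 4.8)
The plan is to deduce tightness from two facts already established: the uniform bound of Proposition~\ref{propest} and the compact embedding $\mX_{T,\varphi}\hookrightarrow\mXe_{T,\varphi}$. For $R>0$ set
$$
B_R=\{u\in\mX_{T,\varphi}\ :\ \|u\|_{\mX_{T,\varphi}}\leq R\},
$$
and let $K_R$ denote the closure of $B_R$ in $\mXe_{T,\varphi}$. Granting that each $K_R$ is compact in $\mXe_{T,\varphi}$, a given $\delta>0$ is handled by Markov's inequality combined with Proposition~\ref{propest}:
$$
\nu_N\big(\mXe_{T,\varphi}\setminus K_R\big)\ \leq\ \nu_N\big(\|u\|_{\mX_{T,\varphi}}>R\big)\ \leq\ \frac{1}{R^2}\,\E_{\nu_N}\big(\|u\|_{\mX_{T,\varphi}}^2\big)\ \leq\ \frac{C}{R^2},
$$
with $C$ independent of $N$. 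Choosing $R=\sqrt{C/\delta}$ gives $\nu_N(K_R)\geq 1-\delta$ for every $N$, which is exactly tightness. Here one uses that the inclusion $\mX_{T,\varphi}\hookrightarrow\mXe_{T,\varphi}$ is continuous, hence Borel measurable, so that transporting $\nu_N$ to $\mXe_{T,\varphi}$ along it is legitimate and $\nu_N(K_R)\geq\nu_N(B_R)$.

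The substantive point is the compactness of $K_R$, i.e. the relative compactness of $B_R$ in $\mXe_{T,\varphi}=\mathcal C([-T,T],\mXe_\varphi)$, which I would establish by an Arzel\`a--Ascoli argument, exactly as announced after \eqref{spaceXY}. First, for each fixed $t\in[-T,T]$ and each $u\in B_R$ one has $\|u(t)\|_{\mX_\varphi}\leq R$, so the set $\{u(t):u\in B_R\}$ is contained in the ball of radius $R$ of $\mX_\varphi$, which is compact in $\mXe_\varphi$ by Proposition~\ref{compact}; hence $B_R$ is pointwise relatively compact. Second, by the very definition of the $\mathcal C^\alpha$ norm, $\|u(t_1)-u(t_2)\|_{\mX_\varphi}\leq R\,|t_1-t_2|^\alpha$ for all $u\in B_R$ and all $t_1,t_2\in[-T,T]$; since the embedding $\mX_\varphi\hookrightarrow\mXe_\varphi$ is continuous this provides a modulus of continuity uniform over $B_R$ for the maps $t\mapsto u(t)\in\mXe_\varphi$, so that $B_R$ is equicontinuous. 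The Arzel\`a--Ascoli theorem for continuous maps from the compact interval $[-T,T]$ into the metric space $\mXe_\varphi$ then yields that $B_R$ is relatively compact in $\mathcal C([-T,T],\mXe_\varphi)$, i.e. $K_R$ is compact.

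The main obstacle is this Arzel\`a--Ascoli step, and in particular the two inputs it rests on: the continuity of $\mX_\varphi\hookrightarrow\mXe_\varphi$, which one reads off from \eqref{spaceX1} and \eqref{spaceY1} by estimating $(1+\varphi)^{-1}\an{x}^{-6(1+\varepsilon)}D_x^{\sigma(1+\varepsilon)}$ against $(1+\varphi)^{-1}\an{x}^{-6}D_x^{\sigma}$ up to a constant (using $\varepsilon>0$, $\sigma<0$, and that extra negative powers of $\an{x}$ and of $D_x$ are harmless), and the pointwise compactness, which is precisely Proposition~\ref{compact}. Everything else is a routine application of Markov's inequality and of the uniform-in-$N$ bound of Proposition~\ref{propest}; it is worth noting that the $\alpha$-H\"older control of the time increments, which is what makes the family equicontinuous, is exactly the ingredient that the norm of $\mX_{T,\varphi}$ encodes and that Proposition~\ref{propest} bounds uniformly in $N$ (through Lemma~\ref{lem-xbyz} and the invariance of $\rho_N$).
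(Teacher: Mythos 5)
Your argument is correct and follows the same strategy as the paper's proof: deduce tightness from the uniform $L^2_{\nu_N}$ bound of Proposition~\ref{propest} via a Chebyshev/Markov estimate, together with the compactness of bounded $\mX_{T,\varphi}$-balls in $\mXe_{T,\varphi}$. The only difference is that you spell out the Arzel\`a--Ascoli argument needed to lift the compact embedding $\mX_\varphi\hookrightarrow\mXe_\varphi$ of Proposition~\ref{compact} to the time-dependent spaces (a step the paper leaves implicit by citing Corollary~\ref{compeb} directly), and you use $L^2$-Chebyshev rather than H\"older followed by $L^1$-Markov — both cosmetic rather than substantive departures.
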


\begin{proof}
Let $\delta>0$, and define the set
$$
K_\delta:=\{u\in \mathcal{X}^\varepsilon_{T,\varphi}:\|u\|_{\mathcal{X}_{T,\varphi}}\leq \delta^{-1}\}.
$$
Since the embedding $\mathcal{X}_{T,\varphi}\subset \mathcal{X}^\varepsilon_{T,\varphi}$ is compact (see Corollary \ref{compeb}), we have that the set $K_\delta$ is compact in $\mathcal{X}^\varepsilon_{T,\varphi}$ for any $\varepsilon>0$. Moreover, thanks to \eqref{crucialest} and H\"older inequality, we have
$$
\nu_N(K_\delta^c)\leq \delta\|u\|_{L^1_{\nu_N}\mathcal{X}_{T,\varphi}}\leq \delta C.
$$
Therefore, the family of measures $(\nu_N)_{N\geq1}$ is tight in $\mathcal{X}^\varepsilon_{T,\varphi}$.
\end{proof}

\subsection{Existence of a weak flow for NLS}\label{subsec-weaksol}\label{weakflow}

In this subsection, we use Skorokhod's theorem to prove the existence of a weak flow for NLS.

We apply Skorokhod's theorem to get the following proposition.

\begin{proposition} Up to a subsequence, there exists a probability space $(\Omega, \mathcal F, P)$ and a sequence of random variables $(X_N)_N$ with values in $\mXe_{\varphi,T}$ such that the law of $X_N$ is $\nu_N$  and $X_N$ converges almost surely in $\mXe_{\varphi,T}$ towards a random variable $X$. Besides, for all $t\in \R$, almost surely, we have $X_N(t) = \psi_N(t) Y_N$ with $Y_N = X_N(t=0)$ and the law of $X_N(t)$ is $\rho_N$.
\end{proposition}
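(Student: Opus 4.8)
The statement is essentially an assembly of the machinery set up in the previous subsections, so the plan is to chain together Prokhorov's theorem, Skorokhod's theorem and the two structural Propositions \ref{prop-linkrvmeas}--\ref{prop-invarrv}. First I would invoke the tightness of the family $(\nu_N)_N$ in $\mXe_{T,\varphi}$, proved at the end of Subsection \ref{tightsec} (which itself rests on the uniform bound of Proposition \ref{propest} and on the compact embedding $\mX_{T,\varphi}\subset\mXe_{T,\varphi}$ coming from Corollary \ref{compeb}). By Prokhorov's theorem (Theorem \ref{prok}), a tight family of probability measures is weakly compact, hence there is a subsequence, still denoted $(\nu_N)_N$, and a probability measure $\nu$ on the Borel $\sigma$-algebra of $\mXe_{T,\varphi}$ with $\nu_N\to\nu$ weakly.

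Next I would apply Skorokhod's theorem. Since $\mXe_{T,\varphi}=\mathcal C([-T,T],\mXe_\varphi)$ with $\mXe_\varphi$ defined by an $L^2$-type norm, this space is separable; in particular the supports of the $\nu_N$ and of $\nu$ are separable, and Skorokhod's theorem produces a probability space $(\Omega,\mathcal F,P)$ together with random variables $X_N$ of law $\nu_N$ and $X$ of law $\nu$ such that $X_N\to X$ $P$-almost surely in $\mXe_{T,\varphi}$. This gives the first part of the statement.

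It then remains to read off the structure of the $X_N$. Since each $X_N$ takes values in $\mXe_{\varphi,T}$ and has law $\nu_N$, Proposition \ref{prop-linkrvmeas} applies verbatim: setting $Y_N=X_N(t=0)$, there is a single $P$-full-measure event on which $X_N(t)=\psi_N(t)Y_N$ for every $t\in\R$, and the law of $Y_N$ is $\rho_N$. Finally, Proposition \ref{prop-invarrv}, whose proof uses the invariance of $\rho_N$ under $\psi_N$ established in Subsection \ref{invdiff}, yields that for every $t$ the law of $X_N(t)=\psi_N(t)Y_N$ is again $\rho_N$.

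I do not expect a genuine obstacle at this stage, since the analytic content (the uniform probabilistic bounds, the compactness statements, and the transfer of invariance to the topology $\mathcal Z_\varphi$) has all been carried out beforehand; this proposition is precisely the point where those pieces are glued. The only two points deserving a word of justification are: (i) the separability of $\mXe_{T,\varphi}$, needed to meet the hypotheses of Skorokhod's theorem, which follows from separability of $L^2$ and of $\mathcal C([-T,T])$; and (ii) the order of quantifiers, namely that the identity $X_N(t)=\psi_N(t)Y_N$ holds on one fixed full-measure set valid for all $t$ simultaneously, which is exactly how Proposition \ref{prop-linkrvmeas} is phrased, so no additional argument is required.
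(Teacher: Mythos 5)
Your proposal is correct and follows precisely the same route as the paper: the paper's proof is a one-line reference to Skorokhod's theorem (as set up in Subsection \ref{subsec-sko}, which in turn appeals to the tightness/Prokhorov argument of Subsection \ref{subsec-tightness}) together with Propositions \ref{prop-linkrvmeas} and \ref{prop-invarrv}. You have simply made explicit the chaining that the paper delegates to those earlier subsections, including the separability check for Skorokhod.
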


\begin{proof} This is a direct application of Skorokhod's theorem, as explained in Subsection \ref{subsec-sko} and of Propositions \ref{prop-linkrvmeas} and \ref{prop-invarrv}. \end{proof}

\begin{proposition} The law of $X(t)$ is $\rho$. \end{proposition}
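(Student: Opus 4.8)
The plan is to combine the almost sure convergence $X_N \to X$ in $\mXe_{\varphi,T}$ provided by Skorokhod's theorem with the invariance identity of Proposition \ref{prop-invarrv} (the law of $X_N(t)$ is $\rho_N$) and the convergence in law $\rho_N \to \rho$ of Proposition \ref{prop-convmeas}.

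First I would fix $t\in[-T,T]$ and introduce the evaluation map $e_t : \mXe_{\varphi,T} = \mathcal C([-T,T],\mXe_\varphi) \to \mXe_\varphi$, $e_t(u) = u(t)$. By the very definition of the norm on $\mXe_{\varphi,T}$, the map $e_t$ is $1$-Lipschitz, hence continuous and Borel measurable. Since $X_N \to X$ almost surely in $\mXe_{\varphi,T}$, applying $e_t$ gives $X_N(t) = e_t(X_N) \to e_t(X) = X(t)$ almost surely in $\mXe_\varphi$; in particular $X(t)$ is a well-defined $\mXe_\varphi$-valued random variable.

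Next, let $F : \mXe_\varphi \to \R$ be bounded and continuous. On the one hand, the almost sure convergence $X_N(t)\to X(t)$ together with the boundedness of $F$ and the dominated convergence theorem give $\E(F(X_N(t))) \to \E(F(X(t)))$. On the other hand, Proposition \ref{prop-invarrv} asserts that the law of $X_N(t)$ is $\rho_N$, so that $\E(F(X_N(t))) = \E_{\rho_N}(F)$, and Proposition \ref{prop-convmeas} (which in fact covers all bounded measurable $F$) gives $\E_{\rho_N}(F) \to \E_\rho(F)$. Comparing the two limits yields $\E(F(X(t))) = \E_\rho(F)$ for every bounded continuous $F$ on $\mXe_\varphi$, which characterises the law of $X(t)$ as $\rho$.

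This argument is essentially routine; the only point deserving a little care is that the identity ``the law of $X_N(t)$ is $\rho_N$'' is to be read as an equality of Borel probability measures on $\mXe_\varphi$, which is legitimate because $\rho_N$ (originally built on $\mX_\varphi$, resp.\ $\mathcal Z_\varphi$) is carried by $\mXe_\varphi$ thanks to the inclusions of the functional spaces and the almost sure statements already established. One should also note that the class of test functions in Proposition \ref{prop-convmeas} contains all bounded continuous functions, so no extra approximation step is needed; in fact, since that proposition provides convergence in law rather than mere weak convergence, the same argument shows that $X(t)$ has law $\rho$ even when tested against bounded measurable functionals.
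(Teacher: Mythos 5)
Your proof is correct and follows essentially the same route as the paper: almost sure convergence of $X_N(t)$ to $X(t)$ via the (continuous) evaluation map, the identity law$(X_N(t))=\rho_N$, and the convergence of $\rho_N$ to $\rho$ from Proposition \ref{prop-convmeas}. The paper's version is terser but relies on exactly the same three ingredients; your added details (Lipschitz continuity of $e_t$, dominated convergence, the remark on bounded measurable versus bounded continuous test functions) merely spell out what the paper leaves implicit.
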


\begin{proof} We have that $X_N$ almost surely converges towards $X$ in $\mXe_{\varphi,T} = \mathcal C ([-T,T],\mXe_\varphi)$. Hence for all $t\in [-T,T]$, $X_N(t)$ almost surely converges in $\mXe_\varphi$ towards $X(t)$. The almost sure convergence implies the convergence in law. Hence, the law of $X(t)$ in $\mXe_\varphi$ is the limit of $\rho_N$ in $\mXe_\varphi$, that is $\rho$.
\end{proof}

\begin{proposition} The random variable $X$ is almost surely a weak solution to 
$$
i\partial_t u = -\lap u + |u|^2u .
$$
\end{proposition}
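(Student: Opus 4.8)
The plan is to pass to the limit in the equation satisfied by the approximating random variables $X_N$. We know from the previous propositions that, almost surely, $X_N \in \mXe_{\varphi,T}$, that $X_N(t) = \psi_N(t) Y_N$ with $Y_N = X_N(0)$, and that $X_N \to X$ almost surely in $\mXe_{\varphi,T} = \mathcal C([-T,T],\mXe_\varphi)$. Since $\psi_N$ is the flow of \eqref{NLSN}, for every test function the random variable $X_N$ satisfies, in the distributional sense in time and space, the Duhamel-type identity
$$
X_N(t) = S(t) Y_N - i \int_0^t S(t-\tau)\big(\chi_N |X_N(\tau)|^2 X_N(\tau)\big)\,d\tau ,
$$
or equivalently, tested against a smooth compactly supported $h(t,x)$,
$$
\int \langle X_N(t), i\partial_t h(t) + \lap h(t)\rangle\,dt = \int \langle \chi_N |X_N(t)|^2 X_N(t), h(t)\rangle\,dt + \langle X_N(0), ih(0)\rangle - \langle X_N(T), ih(T)\rangle ,
$$
after a suitable choice of $h$ supported away from the endpoints. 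First I would fix this weak formulation precisely and then show each term converges almost surely along the subsequence.

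The linear terms are straightforward: convergence of $X_N \to X$ in $\mathcal C([-T,T],\mXe_\varphi)$ means that for every fixed $t$, $X_N(t) \to X(t)$ in $\mXe_\varphi$, and since $\mXe_\varphi$ is a weighted negative-Sobolev space, pairing against the Schwartz function $h$ (or $\partial_t h$, $\lap h$) passes to the limit by duality. The genuinely delicate term is the nonlinear one: we must show
$$
\int \langle \chi_N |X_N(t)|^2 X_N(t), h(t)\rangle\,dt \longrightarrow \int \langle |X(t)|^2 X(t), h(t)\rangle\,dt
$$
almost surely. The cutoff $\chi_N \to 1$ pointwise and is bounded by $1$, so on the fixed compact support of $h$ we have $\chi_N \equiv 1$ for $N$ large; hence the $\chi_N$ causes no trouble once we control the cubic expression on a compact set. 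The real point is to upgrade the almost-sure convergence of $X_N$ in the weak topology of $\mXe_\varphi$ to something strong enough to pass to the limit in the cubic nonlinearity locally in $x$.

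For this I would use the uniform bounds of Lemma \ref{lem-theoneforfed}, which give, for suitable $r$ and $\alpha$, that $\E_{\rho_N}(|u(x)|^r) \leq \varphi_r(x)$ and $\E_{\rho_N}\big(|u(x)-u(y)|^r/|x-y|^{1+\alpha r}\big) \leq \varphi_{r,\alpha}(x)$ uniformly in $N$; since the law of $X_N(t)$ is $\rho_N$, Fubini and these bounds show that, uniformly in $N$, $X_N$ is bounded in $L^2_{P}\big(L^2([-T,T], W^{\alpha,r}_{\mathrm{loc}})\big)$ with a weight that makes the embedding into $\mathcal C(K)$ compact on any compact $K \subset \R$ (choosing $r>4$, $\alpha \in (1/r, 2/r)$ as in the proof of Proposition \ref{prop-convmeas}). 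Combined with the almost-sure convergence in the weaker topology $\mXe_{\varphi,T}$, a standard interpolation/Vitali argument gives that, along the subsequence and almost surely, $X_N \to X$ strongly in $L^3\big([-T,T]\times K\big)$ for every compact $K$. This is enough: on the support of $h$, $|X_N|^2 X_N \to |X|^2 X$ in $L^1$, and the nonlinear term passes to the limit. I expect this passage from weak to locally-strong convergence of the $X_N$ — i.e. producing the local compactness in $x$ and combining it with the Skorokhod almost-sure convergence — to be the main obstacle; everything else is bookkeeping. Once all terms have converged, $X$ satisfies the weak formulation of $i\partial_t u = -\lap u + |u|^2 u$ almost surely, which is the claim.
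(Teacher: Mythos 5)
Your proposal follows essentially the same route as the paper: pass to the limit in the weak (distributional) formulation of the truncated equation; observe that the linear terms converge trivially from the a.s.\ convergence of $X_N$ to $X$ in $\mXe_{\varphi,T}$; and concentrate all the work on the cubic term, which requires upgrading this convergence (in a weighted negative-Sobolev topology) to strong local convergence, using the uniform-in-$N$ moment bounds of Lemma~\ref{lem-theoneforfed} that come from Feynman--Kac. You have correctly identified the one genuinely delicate point of the proof.

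Where the two arguments differ is only in how the upgrade is packaged. The paper does not phrase it as an interpolation/Vitali step on a fixed compact $K$: instead it proves, by the same triple decomposition (large $|x|$ tail, high frequency, finite-dimensional remainder $\Pi_M 1_{|x|\le R}$) used in the compactness Proposition~\ref{compact}, that
$$
\|(1+\varphi)^{-1}\langle x\rangle^{-2}(X-X_N)\|_{L^r(\Omega\times[-T,T]\times\R)}\longrightarrow 0
$$
for every $r\in(1,\infty)$, with the tail and high-frequency pieces controlled uniformly in $N$ by Lemma~\ref{lem-theoneforfed} and the finite-dimensional piece controlled by the a.s.\ convergence in $\mXe_{\varphi,T}$ plus dominated convergence. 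Taking $r=6$ then gives $\chi_N|X_N|^2X_N\to|X|^2X$ in a weighted $L^2(\Omega\times[-T,T]\times\R)$, from which one extracts, for a.e.\ $\omega$, an $\omega$-dependent subsequence converging in $\langle x\rangle^7 L^2([-T,T]\times\R)$ and hence in distributions. Your Vitali argument and the paper's $L^r(\Omega\times\cdots)$ argument are morally the same convexity-plus-compactness step; the paper's version is slightly more global (it works in $L^r$ of the full product space and localises in $\omega$ only at the very end, allowing the final subsequence to depend on $\omega$, which is harmless for the statement to be proved). Either way the conclusion is the same, and your sketch would close the argument once the Vitali step is written out; just be aware that the subsequence along which the nonlinearity converges may need to depend on $\omega$, which is fine here since the claim is only a.s.

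One cosmetic point: the Duhamel identity you write has the sign/convention for $S(t)$ slightly mismatched with the paper's conventions, and you do not actually need the mild formulation; the paper works directly with the distributional identity $i\partial_t X_N+\Delta X_N-\chi_N|X_N|^2X_N=0$, which keeps the bookkeeping lighter.
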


\begin{proof}We have that $X_N$ is almost surely a solution in $\mXe_{\varphi,T}$ of
$$
i\partial_t X_N  +\lap X_N - \chi_N |X_N|^2 X_N = 0.
$$
Since almost surely $X_N$ converges in $\mXe_{\varphi,T}$ towards $X$, $i\partial_t X_N$ converges in the sense of distribution towards $i\partial_t X$, and  $\lap X_N$ towards $\lap X$. 

We explain why for almost all $\omega\in \Omega$, there exists a subsequence $\chi_{N_k}|X_{N_k}(\omega)|^2X_{N_k}(\omega)$ which converges towards $|X(\omega)|^2X(\omega)$ by proving that for some $\varphi$, $(1+\varphi)^{-1}\an{x}^{-2}X_N$ converges towards $(1+\varphi)^{-1}\an{x}^{-2}X$ in $L^r(\Omega \times [-T,T] \times \R)$ for all $r\in ]1,\infty[$.

We recall that $\chi_N$ converges towards $1$ in the norm $\|\an{x}^{-1}\cdot \|_{L^{\infty}}$ by construction.

With the same techniques as in Subsection \ref{subsec-tightness}, given that the law of $X_N(t)$ is $\rho_N$ and the law of $X(t)$ is $\rho$, we have that for $s\leq \frac2{r}$, $s\leq 1-\frac3{2r}$,
$$
\|(1+\varphi)^{-1}\an{x}^{-1}  D^s X_N \|_{L^r(\Omega \times [-T,T] \times \R)}
$$
is uniformly bounded in $N$ and that 
$$
\|\an{x}^{-1}(1+\varphi)^{-1}  D^s X\|_{L^r(\Omega \times [-T,T] \times \R)}
$$
is finite for all $s<\frac12$ and $r\in ]1,\infty[$. Let
$$
C = \max(\sup_N \|\an{x}^{-1} (1+\varphi)^{-1} D^s X_N \|_{L^r(\Omega \times [-T,T] \times \R)}, \|\an{x}^{-1} (1+\varphi)^{-1} D^s X\|_{L^r(\Omega \times [-T,T] \times \R)}).
$$

We have 
\begin{multline*}
\|\an{x}^{-2}(1+\varphi)^{-1} (X-X_N)\|_{L^r(\Omega \times [-T,T] \times \R )} \leq \\
\an{R}^{-1} 2C + M^{-s} \an R(1+ \varphi(R))2C + \|\Pi_M 1_{|x|\leq R} (X-X_N)\|_{L^r(\Omega\times [-T,T] \times \R)}.
\end{multline*}
Since $\Pi_M 1_{|x|\leq R}$ projects into a space of finite dimension, we get that $\mXe_\varphi$ and $L^r(\R)$ have equivalent topologies on this space, which yields
$$
\|\Pi_M 1_{|x|\leq R} (X-X_N)\|_{L^r( \R)} \leq C(M,R) \|X-X_N\|_{\mXe_\varphi}
$$
and thus
$$
\|\Pi_M 1_{|x|\leq R} (X-X_N)\|_{L^r(\Omega\times [-T,T] \times \R)} \leq C(M,R) \|\Pi_M 1_{|x|\leq R} (X-X_N)\|_{L^r(\Omega \times [-T,T], \mXe_\varphi)} 
$$
Since $\|X_N\|_{L^r(\Omega \times [-T,T],\mXe_\varphi)}$ is uniformly bounded in $N$ and $\|X-X_N\|_{L^r([-T,T],\mXe_\varphi)} \leq T^{1/r} \|X-X_N\|_{\mXe_{\varphi,T}}$ converges towards $0$, we get by the dominated convergence theorem that 
$$
\|\Pi_M 1_{|x|\leq R} (X-X_N)\|_{L^r(\Omega\times [-T,T] \times \R)} \rightarrow 0
$$
and therefore, so does $\|\an{x}^{-2}(1+\varphi)^{-1} (X-X_N)\|_{L^r(\Omega \times [-T,T] \times \R )}$. With $r=6$, we get that $|X_N|^2X_N$ converges towards $|X|^2 X$ in $\an{x}^{6}(1+\varphi)^{3}L^2(\Omega \times [-T,T] \times \R)$ and hence that $\chi_N |X_N|^2 X_N$ converges towards $|X|^2X$ in $\an{x}^7 (1+\varphi)^{3}L^2(\Omega \times [-T,T] \times \R)$. We deduce from that that for almost all $\omega \in \Omega$, there exists a subsequence $X_{N_k}(\omega)$ such that $\chi_{N_k}|X_{N_k}|^2X_{N_k}$ converges towards $|X|^2X$ in $\an{x}^7L^2([-T,T] \times \R)$ and hence weakly. 

Thus, for almost all $\omega \in \Omega$, there exists a subsequence $X_{N_k}(\omega)$ such that $i\partial_t X_{N_k}(\omega)  +\lap X_{N_k}(\omega) - \chi_{N_k} |X_{N_k}(\omega)|^2 X_{N_k}(\omega)$ goes to $i\partial_t X(\omega) + \lap X(\omega) - |X(\omega)|^2X(\omega)$, which ensures that almost surely and in the sense of distributions
$$
i\partial_t X  +\lap X -  |X|^2 X = 0.
$$
\end{proof}

This concludes the proof of the main theorem.

\begin{definition} Let $\Omega' $ be the set of $\omega \in \Omega$ such that $X(\omega)$ satisfies $i\partial_t u = -\lap u + |u|^2u$. Let $A $ be the image by $X(t=0)$ of $\Omega'$. For all $u_0 \in A$, let
$$
\psi(t)(u_0) = \{ X(t) (\omega) \, |\, \omega \in \Omega'\cap X(0)^{-1}(\{u_0\}) \}.
$$
\end{definition}

This defines a weak flow $\psi(t)$ of $i\partial_t u = -\lap u + |u|^2u $. In particular, we do not have uniqueness of the solution.

\subsection{Invariance of \texorpdfstring{$\rho$}{rho} under the weak flow, further remarks}\label{finalinv}

In this subsection, we interpret $\psi(t)$ and $X(t)$ in terms of measures.

\begin{definition} Let $t\in \R$, we call $\mathcal F_t$ the set of measurable sets $A$ of $\mXe_\varphi$ such that for all $u_0 \in \mXe_\varphi$, if $\psi(t)(u_0) \cap A \neq \phi$ then $\psi(t)(u_0) \subseteq A$.
\end{definition}

\begin{proposition} The set $\mathcal F_t$ is a $\sigma$ algebra included in the topological $\sigma$-algebra of $\mXe_\varphi$. \end{proposition}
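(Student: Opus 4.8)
The plan is to verify the three defining axioms of a $\sigma$-algebra directly from the definition of $\mathcal F_t$; the inclusion of $\mathcal F_t$ in the topological $\sigma$-algebra of $\mXe_\varphi$ will then require no argument at all, since membership in $\mathcal F_t$ already demands that the set be a measurable (hence Borel) subset of $\mXe_\varphi$. Throughout I would adopt the convention $\psi(t)(u_0) = \emptyset$ for every $u_0$ outside the domain of the weak flow, so that the quantifier ``for all $u_0 \in \mXe_\varphi$'' in the definition of $\mathcal F_t$ is meaningful with no case distinctions. Conceptually, $\mathcal F_t$ is the family of Borel sets that are \emph{saturated} for the family of orbits $\{ \psi(t)(u_0) \}_{u_0}$, and such a family is always a $\sigma$-algebra; the proof below is just that soft fact made explicit.

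First I would note that $\mXe_\varphi$ itself lies in $\mathcal F_t$: it is Borel, and each $\psi(t)(u_0)$ is by construction contained in $\mXe_\varphi$ (recall $X$ takes values in $\mXe_{\varphi,T} = \mathcal C([-T,T],\mXe_\varphi)$), so the implication ``$\psi(t)(u_0) \cap \mXe_\varphi \neq \emptyset \Rightarrow \psi(t)(u_0) \subseteq \mXe_\varphi$'' is trivially true. For closure under complementation, let $A \in \mathcal F_t$; then $A^c$ is Borel, and if $\psi(t)(u_0) \cap A^c \neq \emptyset$ while $\psi(t)(u_0) \not\subseteq A^c$, then $\psi(t)(u_0)$ meets $A$, hence $\psi(t)(u_0) \subseteq A$ because $A \in \mathcal F_t$, contradicting $\psi(t)(u_0) \cap A^c \neq \emptyset$; thus $\psi(t)(u_0) \subseteq A^c$ and $A^c \in \mathcal F_t$. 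The mechanism exploited here is the ``all-or-nothing'' dichotomy: a set in $\mathcal F_t$ either contains an orbit entirely or is disjoint from it. For closure under countable unions, let $A_n \in \mathcal F_t$ and $A = \bigcup_n A_n$; then $A$ is Borel, and if $\psi(t)(u_0) \cap A \neq \emptyset$ then some $A_{n_0}$ meets the orbit, so $\psi(t)(u_0) \subseteq A_{n_0} \subseteq A$, giving $A \in \mathcal F_t$. Together with the first point this shows $\mathcal F_t$ is a $\sigma$-algebra, and it is contained in the topological $\sigma$-algebra of $\mXe_\varphi$ by the very definition of $\mathcal F_t$.

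I do not expect a genuine obstacle here: this is a routine measure-theoretic verification, and the real content of the subsection lies in the subsequent interpretation of $\psi(t)$ and $X(t)$ in terms of measures. The only points worth a moment's care are the convention fixing $\psi(t)(u_0)$ off the domain of $\psi(t)$, and the observation that — since \eqref{NLS2} is not known to be well posed in this framework — distinct base points may give overlapping orbits $\psi(t)(u_0)$; one should make sure the argument never invokes disjointness of the orbits, which indeed it does not, as it uses only that a member of $\mathcal F_t$ is the union of the orbits it meets.
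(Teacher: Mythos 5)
Your argument is correct and follows essentially the same route as the paper's: verify directly from the definition that $\mathcal F_t$ contains a trivial set (you use $\mXe_\varphi$, the paper uses $\emptyset$, which amount to the same thing once complementation is established), is closed under complementation via the same ``all-or-nothing'' contrapositive, and is closed under countable unions by locating an index $n_0$ whose $A_{n_0}$ meets the orbit. The inclusion in the topological $\sigma$-algebra is, as you say, immediate from the definition, which only quantifies over measurable subsets.
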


\begin{proof} The empty set belongs to $\mathcal F_t$.

Let $A \in \mathcal F_t$ and $A^c$ its complementary. Let $u_0 \in \mXe_\varphi$.

If $\psi(t) (u_0)$ is not included in $A^c$ then, we have that $\psi(t)(u_0) \cap A$ is not empty. Hence, as $A$ belongs to $\mathcal F_t$, we get that $\psi(t)(u_0) $ is included in $A$ and thus $\psi(t)u_0 \cap A^c = \Phi$.

The converse statement is that if $\psi(t)u_0 \cap A^c$ is not empty then $\psi(t)u_0$ is included in $A^c$ and hence $A^c$ belongs to $\mathcal F_t$.

Let $(A_n)_{n\in \N}$ be a sequence of sets of $\mathcal F_t$ and let $A = \bigcup A_n$. Let $u_0 \in \mXe_\varphi$.

If $\psi(t)(u_0) \cap A$ is different from the empty set then there exists $n\in \N$ such that $\psi(t) (u_0)\cap A_n$ is non empty. Hence, $\psi(t)u_0 \subseteq A_n \subseteq A$. Thus $A \in \mathcal F_t$.\end{proof}

\begin{remark} The $\sigma$-algebra $\mathcal F_t$ may be trivial. Indeed, if $\psi(t)(u_0)$ is either equal to the empty set or the full set then $\mathcal F_t$ is trivial. 

Nevertheless, let $A_0 = \{|u_0 \in \mXe_\varphi \,|\, \textrm{Card }( \psi(t)(u_0)) = 1\}$ and assume that there exists $A_t$ measurable such that $A_t$ is included in $\psi(t)(A_0)$ then $\mathcal F_t$ contains at least all the $A_t \cap A$ with $A$ measurable in $\mXe_\varphi$.
\end{remark}

\begin{remark} Let us comment upon the lack of uniqueness of the flow. Assume that the cardinal of $\psi(t)(u_0)$ is strictly more than $1$. Then, there exists $\omega_1$ and $\omega_2$ in $\Omega$ such that $X(t)(\omega_1) \neq X(t)(\omega_2)$ but $X(0)(\omega_1) = X(0)(\omega_2) = u_0$. We recall that for all $\tau \in \R$ and $i=1,2$, $X(\tau)(\omega_i)$ is the limit of $X_N(\tau)(\omega_i)$. Since $X(t)(\omega_1) \neq X(t)(\omega_2)$ we get that there do not exist subsequences such that
$$
\psi_{N_k}(t)(X_{N_k}(0)(\omega_1)) = X_{N_k}(t)(\omega_1) =  X_{N'_k}(t)(\omega_2) = \psi_{N'_k}(t)(X_{N'_k}(0)(\omega_2))
$$
and because of the uniqueness and reversibility of $\psi_{N_k}$ that there do not exist subsequences such that
$$
X_{N_k}(0)(\omega_1) =  X_{N'_k}(0)(\omega_2).
$$
In other words, $X_N(0)(\omega_1)$ has to converge in a different way to $u_0$ from $X_N(\omega_2)$.

Hence, if one could prove that almost surely $X(0)(\omega_1) = X(0)(\omega_2)$ implies for example that $X_N(0)(\omega_1) = X_N(0)(\omega_2)$ for an infinite number of $N$s then one would get uniqueness of the flow. To us, it is not obvious how to prove this or even if this is true, but we expect that if it is possible, one should understand it at the level of the convergence of $\rho_N$ towards $\rho$.
\end{remark}

\begin{definition} Let $A \subseteq \mXe_\varphi$ and $t \in \R$, we call the reverse image of $A$ by $\psi(t)$ the set
$$
\psi(t)^{-1}(A) = \{u_0 \in \mXe_\varphi \, |\, \psi(t)(u_0) \subseteq A\}.
$$
\end{definition}

\begin{proposition} Let $t\in \R$ and $A \in \mathcal F_t$, we have 
$$
X(0)^{-1}(\psi(t)^{-1}(A)) = X(t)^{-1}(A).
$$
\end{proposition}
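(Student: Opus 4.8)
The plan is to unfold the definitions of $\psi(t)$, of the reverse image $\psi(t)^{-1}$, and of $\mathcal F_t$, and to verify the equality pointwise in $\omega$. Both $X(0)^{-1}(\psi(t)^{-1}(A))$ and $X(t)^{-1}(A)$ are subsets of $\Omega$, so I would fix $\omega \in \Omega'$ — the set on which $X(\cdot)(\omega)$ solves \eqref{NLS2} and on which $\psi(t)$ is meaningful — put $u_0 := X(0)(\omega)$, and record the single structural fact that drives the argument: since $\omega \in \Omega' \cap X(0)^{-1}(\{u_0\})$, the very definition $\psi(t)(u_0) = \{X(t)(\omega') \mid \omega' \in \Omega' \cap X(0)^{-1}(\{u_0\})\}$ gives $X(t)(\omega) \in \psi(t)(u_0)$. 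I would also rephrase the two memberships: $\omega \in X(0)^{-1}(\psi(t)^{-1}(A))$ is equivalent to $\psi(t)(u_0) \subseteq A$, whereas $\omega \in X(t)^{-1}(A)$ is equivalent to $X(t)(\omega) \in A$.

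With this in hand, one inclusion is free of any hypothesis on $A$: if $\psi(t)(u_0) \subseteq A$, then $X(t)(\omega) \in \psi(t)(u_0) \subseteq A$, so $\omega \in X(t)^{-1}(A)$; hence $X(0)^{-1}(\psi(t)^{-1}(A)) \subseteq X(t)^{-1}(A)$ always. For the reverse inclusion I would use the assumption $A \in \mathcal F_t$ exactly once: starting from $X(t)(\omega) \in A$ and combining it with $X(t)(\omega) \in \psi(t)(u_0)$, I get $\psi(t)(u_0) \cap A \neq \emptyset$; by the defining property of $\mathcal F_t$ this upgrades to $\psi(t)(u_0) \subseteq A$, that is $u_0 \in \psi(t)^{-1}(A)$, that is $\omega \in X(0)^{-1}(\psi(t)^{-1}(A))$. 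The two inclusions then give the claimed equality.

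I do not anticipate any analytic difficulty: the statement is purely a manipulation of the definitions introduced in this subsection. The only points that need a little care are (i) working on $\Omega'$ rather than on all of $\Omega$ — which is legitimate since $\psi(t)$ is defined only on $A = X(0)(\Omega')$ and $P(\Omega') = 1$ — and (ii) placing the hypothesis $A \in \mathcal F_t$ precisely at the step where $\psi(t)(u_0) \cap A \neq \emptyset$ must be promoted to $\psi(t)(u_0) \subseteq A$, since the opposite inclusion $X(0)^{-1}(\psi(t)^{-1}(A)) \subseteq X(t)^{-1}(A)$ holds for an arbitrary $A$.
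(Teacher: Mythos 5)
Your proof is correct and follows essentially the same route as the paper's: both arguments hinge on the single structural fact that $X(t)(\omega)\in\psi(t)\big(X(0)(\omega)\big)$, and both invoke the defining property of $\mathcal F_t$ exactly at the step where $\psi(t)(u_0)\cap A\neq\emptyset$ is upgraded to $\psi(t)(u_0)\subseteq A$. You merely spell out the two inclusions separately and are a bit more careful about restricting to $\Omega'$, where $X(t)(\omega)$ is guaranteed to lie in $\psi(t)\big(X(0)(\omega)\big)$; this is a harmless refinement of the same argument.
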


\begin{proof}Let $\omega \in \Omega$. We have that $\omega$ belongs to $X(0)^{-1}(\psi(t)^{-1}(A))$ if and only if $\psi(t)(X(0)(\omega)) \subset A$. But since $A$ belongs to $\mathcal F_t$ then $\psi(t)(X(0)(\omega)) \subset A$ is equivalent to $X(t)(\omega) \in A$. Indeed, $X(t)(\omega)$ belongs to $\psi(t)(X(0)(\omega))$. Therefore, $\omega \in X(0)^{-1}(\psi(t)^{-1}(A))$ is equivalent to $\omega \in X(t)^{-1}(A)$ which concludes the proof. \end{proof}

\begin{definition} Define $\rho^t$ the transported measure of $\rho$ under $\psi(t)$ on $\mathcal F_t$ as
$$
\rho^t(A) = \rho(\psi(t)^{-1}(A)) := P(X(0)^{-1}(\psi(t)^{-1}(A))).
$$
\end{definition}

\begin{proposition} For all $A \in \mathcal F_t$,
$$
\rho^t(A) = \rho(A).
$$
\end{proposition}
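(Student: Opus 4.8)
The plan is to chain the reverse-image identity from the preceding proposition together with the fact, already established in Subsection~\ref{weakflow}, that the law of $X(t)$ equals $\rho$ for every $t$.

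First I would simply unfold the definition of the transported measure: by construction,
$$
\rho^t(A) = \rho(\psi(t)^{-1}(A)) = P\big(X(0)^{-1}(\psi(t)^{-1}(A))\big).
$$
Next, since $A \in \mathcal F_t$, the previous proposition applies verbatim and gives $X(0)^{-1}(\psi(t)^{-1}(A)) = X(t)^{-1}(A)$, so that
$$
\rho^t(A) = P\big(X(t)^{-1}(A)\big).
$$
Now recall that $\mathcal F_t$ is a $\sigma$-algebra contained in the topological $\sigma$-algebra of $\mXe_\varphi$, so $A$ is a genuine Borel subset of $\mXe_\varphi$, and we have shown that the law of $X(t)$ on $\mXe_\varphi$ is $\rho$: indeed $X_N(t)$ has law $\rho_N$, $X_N(t)$ converges almost surely to $X(t)$ in $\mXe_\varphi$, and $\rho_N \to \rho$ in the relevant sense by Proposition~\ref{prop-convmeas}. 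Hence $P\big(X(t)^{-1}(A)\big) = \rho(A)$, and combining with the display above gives $\rho^t(A) = \rho(A)$, as claimed.

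The only point requiring care is measurability: one must make sure that $A$ lies in the $\sigma$-algebra on which the statement ``the law of $X(t)$ is $\rho$'' makes sense, which is precisely what the inclusion $\mathcal F_t \subseteq \mathcal B(\mXe_\varphi)$ guarantees. Beyond that there is essentially no obstacle — all the substance (invariance of $\rho_N$ under $\psi_N$, the convergence $\rho_N \to \rho$, Skorokhod's theorem, and the reverse-image identity $X(0)^{-1}(\psi(t)^{-1}(A)) = X(t)^{-1}(A)$) has already been assembled in the earlier results, and the present statement merely records that $\rho$ is invariant under the weak flow in the weak sense appropriate to the $\sigma$-algebra $\mathcal F_t$.
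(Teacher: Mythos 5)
Your proof is correct and follows essentially the same route as the paper's: unfold the definition of $\rho^t$, apply the preceding proposition to rewrite $X(0)^{-1}(\psi(t)^{-1}(A))$ as $X(t)^{-1}(A)$, and invoke the previously established fact that $X(t)$ has law $\rho$. The extra remarks on measurability and the recap of how the law of $X(t)$ was obtained are harmless elaboration but not a different argument.
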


\begin{proof} With the last proposition
$$
\rho^t(A) = P(X(t)^{-1}(A))
$$
and the law of $X(t)$ is $\rho$.
\end{proof}

{\bf Acknowledgements.} The first named author is supported by the FIRB 2012 "Dispersive dynamics, Fourier analysis and variational methods".

\bibliographystyle{amsplain}
\bibliography{fed3} 
\nocite{*}

\end{document}